\newcommand{\commentout}[1]{}
\newcommand{\R}{\mathbb{R}}
\newcommand {\e}  {\varepsilon}
\newcommand {\Chi} {{\bf \raise 2pt \hbox{$\chi$}} }
\newcommand {\f}   {\frac}
\newcommand {\p}   {\partial}
\newcommand{\MH}{}
\newcommand{\fer}{\eqref}
\newcommand{\beq}{\begin{equation}}
\newcommand{\eeq}{\end{equation}}
\newcommand{\bea} {\begin{array}{rl}}
\newcommand{\eea} {\end{array}}
\newcommand{\bepa}{\left\{ \begin{array}{l}}
\newcommand{\eepa} {\end{array}\right.}
\newtheorem{theorem}{Theorem}
\newtheorem{lemma}[theorem]{Lemma}
\newtheorem{definition}[theorem]{Definition}
\newtheorem{proposition}[theorem]{Proposition}
\newcommand{\qed}{{ \hfill
                       {\unskip\kern 6pt\penalty 500 \raise -2pt\hbox{\vrule\vbox to 6pt{\hrule width 6pt
                       \vfill\hrule}\vrule} \par}   }}
\title{A moment-based approach for the analysis of the infinitesimal model in the regime of small variance
 }
\author{J. Guerand\thanks{Institut Montpelli\'erain Alexander Grothendieck, Univ. Montpellier, CNRS, Montpellier, France; E-mail: \texttt{jessica.guerand@umontpellier.fr} } \and M. Hillairet\thanks{Institut Montpelli\'erain Alexander Grothendieck, Univ. Montpellier, CNRS, Montpellier, France; E-mail: \texttt{matthieu.hillairet@umontpellier.fr}} \and S. Mirrahimi\thanks{ Institut Montpelli\'erain Alexander Grothendieck, Univ. Montpellier, CNRS, Montpellier, France; E-mail: \texttt{sepideh.mirrahimi@umontpellier.fr}}}
\date{\today}
\begin{document}
\maketitle
\pagestyle{plain}
%\tableofcontents
\pagenumbering{arabic}

\abstract{We provide an asymptotic analysis of a nonlinear integro-differential equation describing the evolutionary dynamics of a population which reproduces sexually and which is subject to selection and competition. The sexual reproduction is modeled via a nonlinear integral term, known as the ''infinitesimal model''. We consider a regime of small segregational variance,   where a parameter in the infinitesimal operator, which measures the deviation between the trait of the offspring and the mean parental trait, is small. We prove  that, in this regime, the phenotypic distribution remains close to a Gaussian profile with a fixed small variance and we characterize the dynamics of the mean phenotypic trait via an ordinary differential equation. While similar properties were already proved for a closely related model using a Hopf-Cole transformation and perturbative analysis techniques, we provide an alternative proof which  relies on a direct study of the dynamics of the moments of the phenotypic distribution and a contraction property of the Wasserstein distance.    }

\medskip

\noindent {\em Keywords :} Infinitesimal model, structured population, quantitative genetics, integro-differential equations, singular limits.\\[2pt]
\noindent {\em 2020 M.S.C. :} 35B40, 35Q92, 92D15, 47G20.

\medskip

\section{Introduction}
\label{sec:intro}

\subsection{Model and question}

The purpose of this article is to provide an asymptotic analysis for small $\varepsilon$ of the following equation
\begin{equation} \label{eq_main}
\begin{cases}
\varepsilon^2 \partial_t   n_\e = r T_{\varepsilon}  [ n_\e]  
-   \left( m  + \kappa \rho_\e(t) \right) n_\e ,\\
\rho_\e(t)= \int_{\mathbb R}   n_\e(t,y)dy,\\
n_\e(0,x)=n_{\e,0}(x),
\end{cases}
\end{equation}
where
\[
T_{\varepsilon}[n_\e](x) =\int_{\mathbb R} \int_{\mathbb R} \Gamma_\e \left(x-\frac{(y+y')}{2}\right)  n_\e(t,y)\f{n_\e(t,y')}{\rho_\e(t)} dydy',
\]
\[
\Gamma_\e(x )= \f{1}{\e\sqrt{\pi}} \exp\left(-\frac{x^2}{\e^2}\right).
\]
This equation describes the evolutionary dynamics of a phenotypically structured population, which reproduces sexually and which is subject to selection and competition. The function $n_\e(t,x)$ stands for the density of a population characterized by a phenotypic trait $x$ and $m(x)$ represents a trait-dependent mortality rate. The population is also subject to a mortality rate due to uniform competition between individuals, represented by the term $\kappa \rho_\e$, where $\rho_\e$ is the total size of the population. The parameter $r >0$ scales reproduction in the population.

 The term $T_\e$ describes the sexual reproduction and is based on the assumption that  the trait of an offspring has a normal distribution with a fixed variance, centered around the mean parental trait. This reproduction model, which is usually referred to as the "infinitesimal model", was first suggested by R. A. Fisher in \cite{RF:19} and it has been widely used in the biological literature   \cite{MB:80,Lynch-Walsh,MT:17}.  Its validity is  under the assumption that the trait $x$ is coded by a large number of alleles which have small and additive effects (see \cite{NB.AE.AV:17} for a recent rigorous derivation).
The parameter $\e$ has been introduced in the problem above to consider a small segregational variance, that is the deviation between the trait of an offspring from the mean parental trait is small. This assumption is closely related to the so-called "weak selection regime" in Quantitative Genetics \cite{RB:00,MT:17}.

We will prove that, under appropriate assumptions and for $\e$ small, the phenotypic density $n_\e$ remains close to a Gaussian distribution with variance $\e^2$ and with a mean value which varies according to  an ordinary differential equation.  Note that in many biological articles, when considering the infinitesimal model, it is assumed that the phenotypic distribution is a  Gaussian with constant variance (see for instance \cite{RL.SS:96,MK.NB:97,OR.MK:01,MK.SM:14}).  Our result   justifies this assumption, considering a homogeneous environment and a small segregational variance. Such a result, considering a closely related model without a competition term and with different assumptions, was already provided in \cite{FP:23} using a perturbative analysis via a Hopf-Cole transformation, that is
$$
n_\e(t,x)=\f{1}{\sqrt{2\pi}\e}\exp \left(\f{u_\e(t,x)}{\e^2}\right).
$$
 Here, we provide an alternative proof and we show that a rather straightforward analysis of the moments, together with a contraction property of the Wasserstein distance, leads to the result. 
 
  %%%%%%%%%%%%%%%
  \subsection{Assumptions and notations}
  %%%%%%%%%%%%%%%

We assume that the mortality rate $m \in C^{\infty}(\mathbb R)$ enjoys the following properties for some $L >0$:
\begin{itemize}
\item[(H0)] $m(x)\geq 0$ with $\inf_{x\in \R}m(x) = 0$, 
\item[(H1)] $m'(0)  = 0$ and $A_0  \leq m''(x)$ for all $x\in (-L,L)$,  for some constant $A_0 >0,$ 
\item[(H2)] $\max_{x \in [-L,L]} m(x) < r $,
\item[(H3)] $|m^{''}(x)| \leq A_m  (1+|x|^{p})$ for all $x\in \R$ and some exponent $p \in \mathbb N^*$ and constant $A_m \in (0,\infty).$
\end{itemize}
In assumption (H0), we fix that the infimum value of  $m$ is $0.$ Indeed,  in \eqref{eq_main} we can always substract a constant to $m$ up to multiply $n_{\e} $ with a suitable time-exponential.
Assumption (H1) states that the trait $x=0$ is a non-degenerate local minimum point of the mortality rate $m$.  Assumption (H2) ensures that the value of $m$ around this minimum point is not too far from its global minimum.  Assumption (H3)  imposes a technical limitation on the growth at infinity.  These assumptions entail in particular that:
\begin{equation} \label{eq_bis}
\left\{
\begin{aligned}
A_0 |x| & \leq  |m'(x)| ,  & A_0 \dfrac{|x|^2}{2} & \leq m(x) { - m(0)},  && \forall \, x \in (-L,L) ,\\
|m'(x)| & \leq {A_m\left(|x|+\dfrac{|x|^{p+1}}{p+1}\right)}, & 
m(x) {- m(0)} & \leq  {A_m\left(\dfrac{|x|^2}{2}+\dfrac{ |x|^{p+2}}{(p+1)(p+2)}\right)} \,, && \forall \, x \in \mathbb R.
\end{aligned}
\right.
\end{equation}
We comment on the global-in-time well-posedness of \eqref{eq_main} under assumption (H0)-(H3) below.  Given a solution $n_{\e}$ to \eqref{eq_main}, let's denote 
\beq
\label{def:qe}
q_\e(t,x)=\frac{n_\e(t,x)}{\rho_\e(t)}.
\eeq
One can verify that $q_\e$ solves the following equation 
\begin{equation} \label{eq_rm}
\varepsilon^2 \partial_t q_\e = r \left(\widetilde T_{\varepsilon}[q_\e] - q_\e \right) 
-  \left( m  - \int_{\mathbb R} mq_\e dx \right) q_\e ,
\end{equation}
with
\[
\widetilde T_{\varepsilon}[q_\e](x) =  \int_{\mathbb R} \int_{\mathbb R} \Gamma_\e \left(x-\frac{(y+y')}{2}\right) q_\e(t,y)q_\e(t,y') dydy'.
\]
We remark that \eqref{eq_rm} preserves  probability densities.   We next introduce the following notations corresponding to the moments of the phenotypic distribution:
\beq
\label{moments}
\begin{aligned}
& M_{\e,1}(t) = \int_{\mathbb R} x q_\e(t,x){\rm d}x, \\  
& M_{\e,k}^{c} (t)= \int_{\mathbb R} (x- M_{\e,1}{(t)})^{k} q_\e(t,x){\rm d}x, \quad
M_{\e,k}^{|c|}(t) = \int_{\mathbb R} |x- M_{\e,1}{(t)}|^{k} q_\e(t,x){\rm d}x  \quad \forall \, k  \in \mathbb N.
\end{aligned}
\eeq
We remark that the above convention entails in particular $M_{\e,0}^{c} = 1$ and $M_{\e,1}^c =0$ in the cases $k=0,1.$ We finally define
\begin{equation} \label{eq_gepsilon}
g_{\varepsilon}(t,x) = \dfrac{1}{\sqrt{2\pi}\varepsilon} \exp \left(-  \dfrac{(x-\bar{Z}_\e(t))^2}{2\varepsilon^2}\right)
\end{equation}
with   $\bar{Z}_\e$ the solution to:
\begin{equation}
\label{eq:Ze}
\left\{
\begin{aligned}
\dot{\bar{Z}}_\e(t) &= -  m'(\bar{Z}_\e(t)), \\
\bar{Z}_\e(0) & = M_{\e,1}(0).  
\end{aligned}
\right.
\end{equation}

\medskip

We will make the following assumptions guaranteeing a well-prepared initial condition $q_{\varepsilon,0}$ (or equivalently $n_{\varepsilon,0}$):
\begin{equation} \label{eq_initialdata}
\tag{H4}
M_{\varepsilon,1}(0) \in (-L,L), \qquad
 \varepsilon^{-2k_0} M_{\e,2k_0}^c(0) \leq C_1,
\end{equation}
with  $C_1 > 0$ fixed  and  $k_0$ large enough such that
\begin{equation} \label{eq_aprioriM1}
k_0> 3  + \lceil p/2  \rceil,\qquad  r  \left( 1 - \dfrac{2}{4^{k_0}} \right)  -  \max_{x\in[-L,L]}m (x) = : \eta  > 0 .
\end{equation}
Both parameters $C_1$ and $k_0$ shall be chosen independent of $\varepsilon.$ To address the limiting behavior when $\varepsilon \to 0$ we also assume that
\begin{align}
\tag{H5}
\label{as:M10}
& M_{\e,1}(0)= x_0+O(\e), &&  x_0\in  (-L,L) ,
\\[4pt]
\tag{H6}
\label{as:rho0}
& \rho_m\leq \rho_\e(0) \leq \rho_M, && \text{for some positive constants $\rho_m$ and $\rho_M$}.
\end{align}
{We point out that Assumption \fer{as:M10}} implies formally that, for all $t>0$ and  as $\e\to 0$, $\bar{Z}_\e(t)\to \bar Z(t)$, with
\begin{equation}
\label{eq:Z}
\left\{
\begin{aligned}
\dot{\bar{Z}} (t) &= -  m'(\bar{Z} (t)) ,\\
\bar{Z} (0) & = x_0.  
\end{aligned}
\right.
\end{equation}
In Section \ref{sec:aprho},  we give a quantitative description of this convergence.

%%%%%%%%%%%%%%%
\subsection{Main results}
%%%%%%%%%%%%%%%
We first  provide a preliminary result guaranteeing  the well-posedness of the Cauchy problem.  
\begin{proposition}
\label{prop:Cauchy}
Let $\ell \in \mathbb N^*$, $\ell \geq 2,$  and assume that $m$ satisfies (H0) and (H3) and $n_0 \in L^1(\mathbb R) \setminus \{0\}$ is nonnegative and has more than $2\ell$ bounded moments. 
Given $\varepsilon >0,$  there exists a unique global solution $n_\e$ to equation \fer{eq_main} with $2\ell$ bounded moments.  This solution is nonnegative and all moments of $n_\e$ of order less than $2\ell$ are well defined and continuous on $[0,\infty).$ 
\end{proposition}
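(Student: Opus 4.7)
The plan is to combine a Banach fixed-point argument for local existence with uniform a priori bounds on mass and moments to produce a global solution. I would work with the mild (Duhamel) formulation
\[
n_\e(t,x) = n_0(x)\, \exp\!\left(-\frac{1}{\e^2}\int_0^t (m(x) + \kappa \rho_\e(s))\, ds\right) + \frac{r}{\e^2}\int_0^t \exp\!\left(-\frac{1}{\e^2}\int_s^t (m(x) + \kappa \rho_\e(\tau))\, d\tau\right) T_\e[n_\e](s,x)\, ds,
\]
in which nonnegativity is automatic because $T_\e$ preserves positivity whenever $\rho_\e > 0$. The three key a priori estimates are the following. First, integrating \eqref{eq_main} in $x$ and using $m \geq 0$ gives $\e^2 \dot\rho_\e \leq r\rho_\e - \kappa \rho_\e^2$, hence $\rho_\e(t) \leq \max(\rho_\e(0), r/\kappa)$ by logistic comparison. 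Second, for $M_k(t) := \int_{\R} |x|^k n_\e(t,x)\, dx$ with $k \leq 2\ell$, the change of variables $x \mapsto x + (y+y')/2$ inside $\int |x|^k T_\e[n_\e]\, dx$ combined with the convex inequalities $|a+b|^k \leq 2^{k-1}(|a|^k + |b|^k)$ and $|(y+y')/2|^k \leq (|y|^k + |y'|^k)/2$ yields a closed differential inequality of the form $\e^2 \dot M_k \leq c_k(\e) \rho_\e + c_k' M_k$, and hence Gr\"onwall control of $M_{2\ell}(t)$ on every finite interval. Third, from (H3) and \eqref{eq_bis} one gets $\int m\, n_\e\, dx \leq C(1+M_{p+2})\rho_\e$; plugged into $\e^2 \dot\rho_\e \geq -\int m\, n_\e\, dx - \kappa\rho_\e^2$ this gives the strictly positive lower bound $\rho_\e(t) \geq \rho_\e(0)\, \exp(-C(T)/\e^2)$ on $[0,T]$.

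Equipped with these bounds, I would run a Banach fixed-point argument for the Duhamel map on the nonnegative cone of $C([0,T]; L^1(\R; (1+|x|^{2\ell}) dx))$, restricted to functions whose mass stays in a fixed compact subinterval of $(0,\infty)$. For $T$ small enough (depending on $\e$, on $\rho_\e(0)$, and on the initial moments), this map is a contraction because $n \mapsto T_\e[n]$ is bilinear and Lipschitz in the weighted $L^1$ norm once the mass is bounded away from $0$, and the exponential integrating factor depends Lipschitz-continuously on $\rho_\e$ through its exponent. Uniqueness on any finite interval would be obtained by a Gr\"onwall estimate on $\|n_1 - n_2\|_{L^1((1+|x|^{2\ell})dx)}$ for two solutions sharing the same datum, using the same Lipschitz bounds. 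Global existence then follows by standard continuation, since the a priori estimates prevent $M_{2\ell}$ from blowing up in finite time, prevent $\rho_\e$ from exceeding $r/\kappa$, and prevent $\rho_\e$ from reaching $0$. Continuity of the moments of order $k < 2\ell$ is inherited from the continuity in $t$ of $n_\e$ in $L^1((1+|x|^{2\ell})dx)$ via dominated convergence.

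The main technical obstacle is precisely the division by $\rho_\e$ in $T_\e$: the very definition of the operator, the positivity preserved by the mild formulation, and the contraction estimate in the fixed-point argument all hinge on $\rho_\e$ remaining strictly positive throughout the evolution. The upper bound on $\rho_\e$ is essentially free from the logistic structure, but the lower bound is delicate because without control on $M_{p+2}$ there is no a priori bound on $\int m\, n_\e\, dx$, and hence no mechanism ruling out a collapse of mass in finite time. This forces the moment propagation estimate to be established in tandem with, rather than after, the existence argument, and is the reason the hypothesis that the initial datum has more than $2\ell$ bounded moments (with $2\ell \geq p+2$) is essential.
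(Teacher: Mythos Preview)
Your overall strategy---Duhamel formulation, Banach fixed point in a weighted $L^1$ space, and continuation via a priori bounds---is viable and in fact more direct than the paper's route. The paper first treats bounded $m$ via Cauchy--Lipschitz in $X_k\cap C_b(\mathbb R)$, then handles unbounded $m$ by truncation and a compactness argument (local equicontinuity in $x$ plus the moment bounds). Your mild formulation absorbs the unbounded $m$ into the exponential integrating factor and so avoids the truncation step entirely, which is a genuine simplification.

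There is, however, a concrete error in your third a priori estimate. From (H3) and \eqref{eq_bis} one gets $m(x)\le C(1+|x|^{p+2})$, hence
\[
\int_{\mathbb R} m\,n_\e\,dx \;\le\; C\bigl(\rho_\e + M_{p+2}\bigr),
\]
not $C(1+M_{p+2})\rho_\e$. With the correct bound, the inequality $\e^2\dot\rho_\e\ge r\rho_\e-\kappa\rho_\e^2-C(\rho_\e+M_{p+2})$ has a negative \emph{additive} forcing term $-CM_{p+2}$ that does not vanish with $\rho_\e$; for small $\rho_\e$ this only yields $\rho_\e(t)\ge \rho_\e(0)-Ct/\e^2$, which does not preclude $\rho_\e$ hitting zero in finite time. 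Your claimed exponential lower bound would follow only if the \emph{normalized} moment $M_{p+2}/\rho_\e$ were bounded, but your Gr\"onwall argument controls $M_{p+2}$, not $M_{p+2}/\rho_\e$.

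The fix is already in your hands: your Duhamel identity immediately gives the pointwise bound
\[
n_\e(t,x)\;\ge\; n_0(x)\,\exp\!\left(-\tfrac{1}{\e^2}\Bigl(m(x)+\kappa\,\max\bigl(\rho_\e(0),\tfrac{r}{\kappa}\bigr)\Bigr)t\right),
\]
since the integral term is nonnegative. Choosing a compact interval $I$ with $\int_I n_0>0$ and integrating over $I$ then yields $\rho_\e(t)\ge e^{-C_I t/\e^2}\int_I n_0>0$ with $C_I=\|m\|_{L^\infty(I)}+\kappa\max(\rho_\e(0),r/\kappa)$. This is exactly the paper's mechanism (their Lemma giving \eqref{eq_positivite}); with this correction your continuation argument closes and the rest of your outline goes through.
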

In the case where $m$ is considered bounded, the proposition above is a direct consequence of the Cauchy-Lipschitz Theorem. In the case of unbounded $m$ the result can be obtained via a standard truncation procedure.  We provide the main elements of the proof in Appendix \ref{sec:appendix} in which we make precise the definition of solution.  We show in particular that the $j$-th moment of $n$ is also $C^1$ in time if $j < 2\ell-(p+3).$ 

From now on we assume that $n_{\e}$ is a solution to \eqref{eq_main}  with an arbitrary large number of bounded moments.  We remark that $q_{\e}$ is then a solution to \eqref{eq_rm} with the same regularity/integrability.  We next state our main result.
\begin{theorem}
\label{thm:main}
Assume (H1)--(H4). \\
(i) Let $\delta\in (0,1)$.  There exists a constant $K$ depending on the initial condition,   {$r $ and $m$} such that,  for any $\varepsilon$ sufficiently small, there holds 
\beq
\label{est-main-Was}
\sup_{[0,+\infty)} W_1(q_{\e }(t,\cdot),g_{\varepsilon }(t,\cdot)) \leq  K \left( W_1(q_{\e,0},g_{\varepsilon,0}) + \varepsilon^{1-\delta} \right).
\eeq
with $q_{\e,0}$ and $g_{\varepsilon,0}$ the respective values at time $t=0$ of $q_\e$ and $g_{\varepsilon}.$\\
(ii) {Assume additionally (H5)--(H6) and let $\delta\in (0,1),$ and $\beta\in (1,2)$. Then, there exists a constant $K_\rho$ such that, for any $\varepsilon$ sufficiently small, we have
\beq
\label{as-rho}
| \rho_\e(t)-\rho(t)|\leq K_\rho\e^{1-\delta},\qquad \rho(t)=\frac{r-m(\bar Z(t))}{\kappa},\quad \text{for all $t\in [\e^\beta,+\infty)$}
\eeq
  where $\bar Z$ solves \fer{eq:Z}. Moreover, as $\e\to 0$, $n_\e$ converges in $C\left((0,\infty) ; \mathcal{M}^1(\R) \right)$ to a measure $n$, which is given by 
 $$
 n(t,x)=\rho(t)\partial(x-\bar Z(t)).
 $$}
\end{theorem}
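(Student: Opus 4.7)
The plan is to combine direct moment estimates for $q_\e$ with a contraction property of the infinitesimal operator $\widetilde T_\e$, thereby avoiding the Hopf--Cole machinery used in \cite{FP:23}. I will first control the central moments of $q_\e$, then use these bounds to compare $M_{\e,1}$ with $\bar Z_\e$, and finally close a Wasserstein estimate through the contraction of $\widetilde T_\e$.

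The key algebraic observation for the moment control is that if $Y,Y'$ are independent samples of $q_\e$ and $G\sim \Gamma_\e$ is independent of them, then $\widetilde T_\e[q_\e]$ is the law of $(Y+Y')/2+G$; a binomial expansion then gives
\begin{equation*}
\int (x-M_{\e,1})^{2k}\,\widetilde T_\e[q_\e](x)\,\mathrm{d}x = \frac{2}{4^k}\,M_{\e,2k}^c + P_{k}\bigl(M_{\e,2}^c,\ldots,M_{\e,2k-2}^c,\e^2\bigr),
\end{equation*}
for a polynomial $P_k$. In particular $\widetilde T_\e$ preserves the mean and drives the variance towards $\e^2$. Testing \eqref{eq_rm} against $(x-M_{\e,1})^{2k_0}$ and controlling the mortality correction by Taylor-expanding $m$ and invoking (H3), I will bootstrap from (H4) by a Grönwall argument to obtain $M_{\e,2k_0}^c(t)\leq C\,\e^{2k_0}$ uniformly in $t\geq 0$; the crucial sign condition $r(1-2/4^{k_0})>\max_{[-L,L]}m$ from \eqref{eq_aprioriM1} is exactly what makes the linear contraction dominate the mortality forcing. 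All lower central moments $M_{\e,2k}^c\leq C_k\,\e^{2k}$ then follow from Jensen's inequality. In parallel, mean preservation of $\widetilde T_\e$ reduces the evolution of $M_{\e,1}$ to $\e^2\dot M_{\e,1}=-\int m(x)(x-M_{\e,1})q_\e(x)\,\mathrm{d}x$; Taylor-expanding $m$ at $M_{\e,1}$ and inserting the moment bounds yields $\dot M_{\e,1}=-m'(M_{\e,1})+O(\e^{1-\delta})$, and a Grönwall comparison with \eqref{eq:Ze} --- using (H1) to trap $M_{\e,1}$ inside $(-L,L)$ --- gives $|M_{\e,1}(t)-\bar Z_\e(t)|\leq K\e^{1-\delta}$ for all $t\geq 0$.

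To close the Wasserstein estimate I note that $\widetilde T_\e[g_\e]=g_\e$ by direct convolution, so \eqref{eq:Ze} can be rewritten as $\e^2\partial_t g_\e = r(\widetilde T_\e[g_\e]-g_\e)+\e^2 m'(\bar Z_\e)\partial_x g_\e$, mirroring \eqref{eq_rm} with mortality replaced by a linear drift. Subtracting, testing against an arbitrary $1$-Lipschitz $\phi$, and exploiting the $W_2$-contraction $W_2(\widetilde T_\e[p],\widetilde T_\e[q])\leq W_2(p,q)/\sqrt{2}$ (exact when $p,q$ share a common mean, with a correction controlled by the mean comparison above otherwise), I expect to obtain a differential inequality of the form
\begin{equation*}
\e^2\,\tfrac{d}{dt}\,W_1(q_\e,g_\e)\leq -c\,W_1(q_\e,g_\e)+K\,\e^{3-\delta},
\end{equation*}
which upon integration yields \eqref{est-main-Was}. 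For part (ii), integrating \eqref{eq_main} in $x$ and using the identity $\int T_\e[n_\e]\,\mathrm{d}x=\rho_\e$ gives the logistic ODE $\e^2\dot\rho_\e=\rho_\e(r-\bar m_\e-\kappa\rho_\e)$ (with $\bar m_\e=\int m q_\e\,\mathrm{d}x$), so $\rho_\e$ relaxes on the $\e^2$-time scale to the quasi-equilibrium $(r-\bar m_\e)/\kappa$; the moment bounds together with (H5) give $\bar m_\e(t)=m(\bar Z(t))+O(\e^{1-\delta})$, while (H6) keeps $\rho_\e$ away from the unstable zero equilibrium, so a boundary layer of width $\e^\beta$ suffices to reach \eqref{as-rho}. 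The convergence $n_\e=\rho_\e q_\e\to \rho(t)\delta(x-\bar Z(t))$ in $C((0,\infty);\mathcal M^1(\R))$ then follows by combining \eqref{as-rho} with the Wasserstein estimate of part~(i) and the continuity of $t\mapsto \bar Z(t)$.

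The main obstacle I anticipate is the Wasserstein closure step: the $W_2$-contraction of $\widetilde T_\e$ retains its full $1/\sqrt 2$ factor only for probability measures with identical mean, whereas $q_\e$ and $g_\e$ share their mean only up to $O(\e^{1-\delta})$. Converting a $W_2$-contraction into a clean $W_1$ estimate with remainder $O(\e^{1-\delta})$ therefore requires carefully combining the moment bounds and the mean comparison with the Kantorovich--Rubinstein dual formulation so as to absorb the residual drift $(m-\bar m_\e)q_\e - \e^2 m'(\bar Z_\e)\partial_x g_\e$ arising when the two equations are subtracted.
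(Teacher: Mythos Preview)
Your strategy is essentially the paper's: moment bounds via the algebra of $\widetilde T_\e$ and Taylor expansion of $m$, the Tanaka $W_2$--contraction for the reproduction term, Kantorovich--Rubinstein duality, and the logistic ODE for $\rho_\e$. The overall architecture is correct and matches the paper's Sections~2--4.

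The obstacle you flag in your last paragraph is real, and your proposal does not yet resolve it. Two points are worth making precise. First, the dissipation in the Wasserstein step does \emph{not} come from the $1/\sqrt 2$ contraction: working with $I_\phi(t)=\int \phi(q_\e-g_\e)$, the paper obtains $\e^2\dot I_\phi + r I_\phi = F$, where the coefficient $r$ comes from the ``$-q_\e$'' in $r(\widetilde T_\e[q_\e]-q_\e)$; the contraction is used only to bound the \emph{source} $T_r=\int\phi(\widetilde T_\e[q_\e]-\widetilde T_\e[g_\e])$. Second, the $W_2\to W_1$ conversion is handled by the interpolation (H\"older on any transference plan $\pi$)
\[
W_2(q_\e,g_\e)\ \le\ c_0\,W_1(q_\e,g_\e)\ +\ \frac{C}{c_0^{1/2}}\Bigl(M_{\e,4}^c+\e^4+|M_{\e,1}-\bar Z_\e|^4\Bigr)^{1/4},
\]
where $c_0>0$ is a free parameter; this is where the fourth--moment bound (hence the $2k_0$--moment estimate) enters. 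One integrates the linear ODE for $I_\phi$, takes the supremum over $\|\phi'\|_\infty\le 1$ and over $t\in[0,T]$, and then chooses $c_0$ small to absorb the term $c_0\sup_{[0,T]}W_1$ into the left--hand side. There is no differential inequality on $W_1$ itself. Finally, your remainder $K\e^{3-\delta}$ is too optimistic: the dominant error in the selection term $T_s$ is $|M_{\e,1}-\bar Z_\e|=O(\e^{1-\delta})$, which sets the $\e^{1-\delta}$ in \eqref{est-main-Was}.
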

Here we denote $\partial(x-X_0)$ the Dirac mass centered in $X_0.$ We use the (non-standard) symbol $\partial$ in order to avoid confusion with the parameter $\delta$. In this statement, we denote also $W_1$ the wasserstein distance for the $L^1$ distance.
The proof of the theorem above relies on some estimates on the moments of the phenotypic distribution and a contraction property of the Wasserstein distance satisfied by the reproduction operator $\widetilde T_{\varepsilon}$.
 We state below our estimates on the moments of the phenotypic distribution.
 \begin{theorem} \label{thm_ode}
Assume (H1)--(H4) and $\delta \in (0,1).$   For any $\e$ sufficiently small,  there exist large enough positive constants $K_0,K_1$ and $K_2$  depending on the parameters $C_1,r,L$ such that:
\begin{align}
 \label{eq_M4close} M^{c}_{\e,2k_0}(t) &  \leq  K_{2} \varepsilon^{2k_0}, \\
\label{eq_M2close} 
 \left | M_{\e,2}^{c}(t) -  \varepsilon^2  \right| & \leq  K_1\varepsilon^2 \left( \varepsilon^{1-{\delta}}  + \exp(-rt/2\varepsilon^2) \right), \\
\label{eq_M1close} \left| M_{\e,1}(t) - \bar Z_{\e}(t) \right|  &  \leq  K_0 \varepsilon^{1-{\delta}}. 
\end{align}
for any $t \in \mathbb R^+.$
\end{theorem}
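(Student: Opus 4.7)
The plan is to derive evolution equations for the centered moments $M_{\e,k}^c$ of $q_\e$ by testing \eqref{eq_rm} against $(x - M_{\e,1}(t))^k$ and to close them in a cascade: first establish a uniform-in-time control on the high order moment $M_{\e,2k_0}^c$ (which pins down the tail of $q_\e$), then use Hölder to bound every intermediate centered moment by $\varepsilon^{2k}$, and finally plug these bounds into the equations for $M_{\e,2}^c$ and $M_{\e,1}$. All estimates are derived in parallel via a bootstrap that keeps $M_{\e,1}(t)$ inside $(-L,L)$ so that (H1)--(H2) apply.

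The crucial algebraic input is the action of $\widetilde T_\e$ on centered moments. Since $\Gamma_\e$ is the density of a centered Gaussian with variance $\varepsilon^2/2$, one computes
\[
\int_{\R} (x - M_{\e,1})^{k}\, \widetilde T_\e[q_\e](x)\,dx = \mathbb{E}\!\left[\Bigl(\tfrac{Y_1 + Y_2}{2} + \tfrac{\varepsilon}{\sqrt{2}}\, W\Bigr)^{\!k}\right],
\]
where $Y_1, Y_2$ are i.i.d.\ with the recentered law of $q_\e$ and $W \sim N(0,1)$ is independent. Two cases matter. For $k=2$ the right-hand side equals $M_{\e,2}^c/2 + \varepsilon^2/2$, exhibiting the contraction of the variance toward $\varepsilon^2$. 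For $k=2k_0$ the only self-cross terms contribute $\tfrac{2}{4^{k_0}} M_{\e,2k_0}^c$, while every other contribution factors through lower moments and hence scales as $\varepsilon^{2k_0}$ once those moments are controlled. The mortality term is bounded by $\bigl(\max_{[-L,L]} m + O(\varepsilon^2)\bigr) M_{\e,2k_0}^c$ as long as $M_{\e,1}(t) \in (-L,L)$ (the estimate on $\int m q_\e$ uses (H3) and the a priori moment bound itself). Combining these with the definition of $\eta$ in \eqref{eq_aprioriM1} yields the differential inequality
\[
\varepsilon^2\, \dot M_{\e,2k_0}^c \leq -\eta\, M_{\e,2k_0}^c + C\, \varepsilon^{2k_0},
\]
and Gronwall together with (H4) delivers \eqref{eq_M4close}.

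Given \eqref{eq_M4close}, Hölder implies $M_{\e,2k}^{|c|} \leq K\, \varepsilon^{2k}$ for all $k \leq k_0$, which is enough (using $k_0 > 3+\lceil p/2 \rceil$) to absorb the polynomial growth of $m''$ granted by (H3) when Taylor-expanding the mortality terms. Inserting these bounds into the $k=2$ moment equation gives
\[
\varepsilon^2 \dot M_{\e,2}^c = -\tfrac{r}{2}\bigl(M_{\e,2}^c - \varepsilon^2\bigr) + R(t), \qquad |R(t)| \leq C\, \varepsilon^{3-\delta},
\]
and Duhamel's formula produces \eqref{eq_M2close}, with the exponential prefactor arising from the initial mismatch. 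Finally, the mean equation $\varepsilon^2 \dot M_{\e,1} = -\int (x - M_{\e,1})\bigl(m(x) - m(M_{\e,1})\bigr) q_\e\, dx$ Taylor-expands into $-m'(M_{\e,1}) M_{\e,2}^c + O(\varepsilon^3)$; using \eqref{eq_M2close} to substitute $M_{\e,2}^c = \varepsilon^2(1 + O(\varepsilon^{1-\delta}))$ gives $\dot M_{\e,1} = -m'(M_{\e,1}) + O(\varepsilon^{1-\delta})$, and subtracting \eqref{eq:Ze} while invoking the convexity lower bound $m'' \geq A_0$ on $(-L,L)$ closes \eqref{eq_M1close} by Gronwall (note that $\bar Z_\e(t)$ remains in $(-L,L)$ for all $t \geq 0$ as a gradient-flow trajectory of the convex function $m$ restricted to $(-L,L)$).

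The main obstacle is the circular coupling of the three estimates: the mortality bound used in the $M_{\e,2k_0}^c$ inequality requires $M_{\e,1}(t) \in (-L,L)$, which is itself the content of \eqref{eq_M1close}. I would resolve this by setting
\[
T^\ast = \sup\Bigl\{ T \geq 0 : M_{\e,1}(t) \in (-L,L) \text{ and } M_{\e,2k_0}^c(t) \leq 2 C_1 \varepsilon^{2k_0} \text{ on } [0,T] \Bigr\},
\]
which is positive by (H4)--(H5) and continuity, then showing that the estimates above \emph{strictly} improve both conditions on $[0,T^\ast]$ when $\varepsilon$ is small enough (the first using the spectral gap $\eta$, the second since $\varepsilon^{1-\delta} \ll L$), so that $T^\ast = +\infty$. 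The careful bookkeeping of the weights from (H3) when Hölder-interpolating the moments through the Taylor remainders is precisely where the $\varepsilon^{\delta}$ loss enters and where the argument needs to be most precise.
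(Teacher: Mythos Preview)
Your approach is essentially the paper's: derive the moment ODEs, exploit the contraction $2/4^{k_0}$ coming from $\widetilde T_\e$, run a continuation argument, and close the cascade $M_{\e,2k_0}^c \Rightarrow M_{\e,2}^c \Rightarrow M_{\e,1}$ via interpolation and Taylor expansion of $m$. The structure and all the key ideas are correct.

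There is, however, a genuine quantitative gap in your bootstrap. You propose to carry only the two hypotheses $M_{\e,1}(t)\in(-L,L)$ and $M_{\e,2k_0}^c(t)\le 2C_1\varepsilon^{2k_0}$, and then claim the spectral gap $\eta$ strictly improves the second. But inside the $2k_0$-moment equation the forcing contains the block
\[
\sum_{j=2}^{2k_0-2}\frac{1}{4^{k_0}}\binom{2k_0}{j}M_{\e,2k_0-j}^{c}M_{\e,j}^{c}.
\]
If you interpolate each factor only between $M_{\e,0}^c=1$ and $M_{\e,2k_0}^c\le 2C_1\varepsilon^{2k_0}$ (Jensen), the product picks up the exponent $\tfrac{2k_0-j}{2k_0}+\tfrac{j}{2k_0}=1$, i.e.\ this block is of size $c_{k_0}\,(2C_1)\,\varepsilon^{2k_0}$ with $c_{k_0}=4^{-k_0}\sum_{j=2}^{2k_0-2}\binom{2k_0}{j}\approx 1$. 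After Gronwall you get $M_{\e,2k_0}^c\lesssim \bigl(C_1+ \tfrac{2C_1}{\eta}c_{k_0}\bigr)\varepsilon^{2k_0}$, which is $<2C_1\varepsilon^{2k_0}$ only if $\eta>2c_{k_0}$; nothing in (H1)--(H4) guarantees this (\eqref{eq_aprioriM1} only asks $\eta>0$).

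The paper closes this loop by putting \emph{all three} estimates \eqref{eq_M4close}--\eqref{eq_M1close} into the continuation hypothesis, so that on the bootstrap interval one also has $M_{\e,2}^c\le 2K_1\varepsilon^2$. Interpolating now between $M_{\e,2}^c$ and $M_{\e,2k_0}^c$ gives $M_{\e,l}^{|c|}\le (2K_1)^{\frac{2k_0-l}{2k_0-2}}K_2^{\frac{l-2}{2k_0-2}}\varepsilon^{l}$, and the cross block above becomes $\lesssim K_1^{\frac{k_0}{k_0-1}}K_2^{\frac{k_0-2}{k_0-1}}\varepsilon^{2k_0}$, sublinear in $K_2$. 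One then fixes $K_1$ first (from the $M_{\e,2}^c$ equation) and chooses $K_2$ large enough afterwards; this is exactly the ordering $K_1\to K_2\to K_0$ the paper insists on. Your proposal will go through once you enlarge the bootstrap to include the $M_{\e,2}^c$ bound (or, equivalently, replace $2C_1$ by a free constant $K_2$ and use this finer interpolation). A minor side remark: you invoke (H5) for $T^\ast>0$, but Theorem~\ref{thm_ode} assumes only (H1)--(H4), and (H4) already gives $M_{\e,1}(0)\in(-L,L)$.
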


  We emphasize that the parameters $K_0,K_1,K_2$ are independent of $\varepsilon,\delta.$ In the course of the proof,  we shall choose first $K_1$ depending on data $C_1$ (see assumption \eqref{eq_initialdata}),  then $K_2$ depending on $K_1,C_1$ and finally  $K_0$ depending on $K_1,L.$ We restrict $\varepsilon$ according to $K_2,K_0$ and $L$.
 The estimates on the moments are obtained via direct computations of the corresponding equations and  using the Taylor expansion  of the mortality rate $m$ around the mean phenotypic trait. The main ingredients are the presence of some dissipation terms,  due to an algebra satisfied by $T_{\varepsilon}$, and the fact that the central moments of the phenotypic distribution have simple approximations when the segregational variance $\e$ is small.  

 One may wonder whether the method suggested in this article could be adapted to study models with an asexual reproduction term.  In Section \ref{sec:comp-asex} we will explain why the dissipation terms are lacking in this latter case and our method cannot be adapted to such models {straightforwardly}.

%%%%%%%%%%%%%%%%%
 \subsection{State of the art}
 %%%%%%%%%%%%%%%%

The study of integro-differential equations involving the infinitesimal model has gained increasing attention recently from the mathematical community.  Several works have used the contraction property of the Wasserstein distance  to study models which involve the infinitesimal operator  \cite{GR:17,GR:22} (see also \cite{PM.GR:15} for a related work on a model of protein exchanges in a cell population). In \cite{GR:17}, G. Raoul studied a model describing the evolutionary dynamics of a population in an environment with continuous spatial structure, under a small selection assumption. He justified the Gaussian assumption and  the so-called Kirkpatrick and Barton model, {\em i.e. } a system of equations describing the dynamics  of the size of the population and its mean phenotypic trait, as   functions of time and space variables   \cite{MK.NB:97}  (see also \cite{SM.GR:13} for an earlier formal derivation of this model). Later in \cite{GR:22}, G. Raoul also studied a model with homogeneous environment, still in a small selection regime, and obtained that the solution of this problem remains close to a Gaussian distribution with fixed variance, while the mean of the distribution varies according  to an ordinary differential equation  similar to \fer{eq:Z}. He also showed that in long time the solution converges to the steady solution of the problem. This work is different from ours in several ways. First,  in \cite{GR:22} the selection is considered to act on the reproduction and not the mortality term. In other words the mortality rate $m$ is supposed to be constant, while a trait dependent reproduction rate $r(\cdot)$ is taken into account in the reproduction term $T$. More importantly, this reproduction rate is assumed to be constant outside of a compact set. Second, \cite{GR:22} does not consider a  competition term as in \fer{eq_main} but considers a renormalized equation closely related to \fer{eq_rm} such that the total population size remains constant equal to $1$.  Finally,  in \cite{GR:22} the segregational variance is assumed to be of order $1$, while in our work it is of order $\e^2$, but the selection term is supposed to be small, that is $r(x)= 1+\e a(x)$, with $a(x)$  compactly supported. One can make a change of variable in our model, that is $\widetilde n_\e(t,x)=\e n_\e (\e^2 t,\e x)$ and $\widetilde \rho_\e(t)=\rho(\e^2 t)$,  to bring the segregational variance equal to $1$, in order to compare our model with the one studied in \cite{GR:22}. Then \fer{eq_main} becomes 
\beq
\label{scaling-G}
  \partial_t  \widetilde n_\e =  T_1  [ \widetilde n_\e]  
-   \left( m(\e \cdot)  + \kappa \widetilde \rho_\e(t) \right)\widetilde n_\e.
\eeq
Comparing $m(\e\cdot)$ and $r(x)=1+\e a(x)$ we realize that these models have different natures. The most important difference in the outcome of these models is that, since $a$ is  compactly supported, the work of \cite{GR:22} only allows for an order $1$ evolution of the mean phenotypic trait of the population, while our model allows for variations of order $1/\e$, after the change of variable leading to \fer{scaling-G}. Our work is indeed in the weak selection regime only in the sense that the evolutionary dynamics are slower than the demographic dynamics but it still allows,   contrary to \cite{GR:22}, important changes of the mean phenotypic trait in long time.

The scaling that we consider is indeed the one studied in \cite{VC.JG.FP:19,FP:23} (see also \cite{LD:20} for the analysis of a model with a spatial structure {and} where the analysis of the infinitesimal operator is partially formal). This scaling is  inspired from previous works on selection-mutation models with asexual reproduction under the assumption of small mutational variance \cite{OD.PJ.SM.BP:05,GB.BP:08,GB.SM.BP:09,AL.SM.BP:10}.  The main ingredient of this approach is a Hopf-Cole transformation that leads to a Hamilton-Jacobi equation with constraint. In the case of the infinitesimal operator,  a similar scaling was suggested in \cite{VC.JG.FP:19} and an asymptotic analysis of a similar model to \fer{eq_main}, without the competition term, was provided in \cite{VC.JG.FP:19} to characterize steady solutions and in \cite{FP:23} to study the time-dependent problem. Although these studies are also based on the Hopf-Cole transformation,  they are very different from the case of asexual reproduction. These analyses rely on a perturbative analysis and do  not involve   Hamilton-Jacobi equations. Our work allows to provide an alternative approach to the one in \cite{FP:23} that relies on a direct study of the dynamics of the moments rather than a Hopf-Cole transformation.  We believe that this approach could be more easily adapted to study more complex models considering spatial or temporal heterogeneities of the environment.  Note also that our assumptions are  different from those of \cite{FP:23}. In \cite{FP:23} a function $M(t,x)$ was defined as follows (rewriting the definition with our notations)
$$
M(t,x)=r+m(x)-m(\overline Z(t))-m'(\overline Z(t))(x-\overline Z(t)).
$$
Some technical assumptions on the growth of $M$ were made in \cite{FP:23} and  it was additionally assumed that
\beq
\label{ass:Patout}
\inf_{(t,x)\in \R_+\times \R} M(t,x) >0.
\eeq
This assumption may be compared to  Assumption (H2). Assumption (H2) together with the first statement of Assumption (H4) can indeed be replaced by the following 
\beq
\label{our-ass}
0< r+ m(x) -m(\overline Z(t)) ,\qquad \text{for all $(t,x)\in \R_+\times \R$}.
\eeq
This assumption is less restrictive, compared to \fer{ass:Patout}, on the choice of initial condition when the mortality rate $m$ is non-convex and it has for instance several local minima. Note for instance that when the function $m$ has two global minima $x_1<x_2$, Assumption \fer{ass:Patout} does not allow for initial conditions which are concentrated around a point  close but to the right of $x_1$, while Assumption \fer{our-ass} allows for initial conditions which are concentrated around a point located in the convexity zones around $x_1$ or $x_2$. For convex growth rates $m$, Assumption \fer{ass:Patout} is less restrictive since it always holds true.   Note however from \fer{as-rho} that to have a positive asymptotic population size $\rho(t)$ one should have  $r-m(\overline Z(t))>0$, which trivially implies that   $r+m(x)-m(\overline Z(t))>0$. Assumption \fer{our-ass} is not hence  restrictive from the modeling point of view.  Finally it is worth mentioning that in \cite{VC.JG.FP:19} the authors proved the existence of steady solutions which concentrate around local minimum points $x_\ast$ which satisfy the following condition
$$
0< r+ m(x) -m(x_\ast) , \qquad \text{for all $x\in \R$}.
$$

More recent works have also studied  the long time behavior of closely related models, but considering  discrete time, without the assumption of weak selection \cite{VC.TL.DP:23,VC.DP.FS:23}.  Other models of adaptive evolution of populations with sexual reproduction and quantitative traits have been studied in \cite{NF.BP:21,BP.MS.CT:22,LD.SM:22}. An asymptotic analysis of a selection-mutation model with an asymmetric reproduction term, considering for instance traits that are mostly inherited from the female, has been provided in \cite{BP.MS.CT:22}. In \cite{LD.SM:22} a model describing the adaptation of quantitative alleles at two loci in a haploid sexually reproducing population has been studied. In \cite{NF.BP:21} a non-expanding transport distance has been introduced in a model with sexual reproduction considering quantitative traits, but with constant birth and death rates.

%%%%%%%%%%%%%
\subsection{Plan of the paper}
%%%%%%%%%%%%%%
In Section \ref{sec:moments} we provide the estimates on the moments of the phenotypic distribution and prove Theorem \ref{thm_ode}.  In Section \ref{sec:Was} and Section \ref{sec:aprho} we prove respectively Theorem \ref{thm:main}-(i) and (ii). In Section \ref{sec:comp-asex} we provide a comparison between our work and the study of related models with an asexual reproduction term. Finally the main ingredients of the proof of Proposition \ref{prop:Cauchy} are given in Appendix \ref{sec:appendix}.

\medskip

In all computations we use the symbol $C$ for a harmless constant that may vary between lines. We use labelled constants to keep track of the important parameters.

\section{Estimates on the moments of the phenotypic distribution; the proof of Theorem \ref{thm_ode}}
\label{sec:moments}

In the whole section, $q_{\e}$ is a solution to \eqref{eq_rm} with an arbitrary large number of bounded moments so that we may assume that all the necessary moments to our analysis are $C^1$ in time (see Appendix \ref{sec:appendix}).  
We split the approach of {Theorem \ref{thm_ode}} into two steps. Firstly, we compute {\em a priori} ODEs satisfied by moments of $q_{\e}$.  Secondly, we apply a continuation argument to these ODEs to showing that we can find $K_0,K_1,K_2$ depending only on the data of the problem  such that  \eqref{eq_M4close}-\eqref{eq_M2close}-\eqref{eq_M1close} hold globally in time for arbitrary $\delta < 1$ provided $\varepsilon$ is sufficiently small.

\medskip

Before providing our analysis of the moments of the phenotypic distribution,  we first introduce some notations and ingredients of the proof.
We denote $r^{f}[X]$ the (normalized) remainder in the first order Taylor-Lagrange expansion in $X \in \mathbb R$ of a $C^2$ function $f.$ Namely, 
we set:
\[
r^f[X](x) = \int_0^1 (1-{\MH \sigma}) f^{''}(X + {\sigma}(x-X)) {\rm d}{\sigma},
\]
so that
\beq
\label{Taylor}
f(x) = f(X) + (x-X)  f'(X)+ (x-X)^2 r^{f}[X](x).
\eeq
In case of $f=m$, thanks to (H3), we have 
\begin{equation} \label{eq_expansionm}
|r^{m}[X](x)| \leq CA_m ( {1+\,} |X|^{p} + |x-X|^{p}) \quad \forall \, (x,X) \in \mathbb R^2.
\end{equation}
Finally, a key-quantity below is:
\[
I_{\e,m} (t):= \int_{\mathbb R} m(x) q_\e (t,x){\rm d}x.
\]
Using the Taylor expansion $m(x) = m(M_{\e,1}{(t)}) +m'(M_{\e,1}{(t)})(x-M_{\e,1}{(t)})+ r^m[M_{\e,1}{(t)}](x) (x- {M_{\e,1}(t)})^2 $ and
recalling that $M_{\varepsilon,1}{(t)}$ is the first moment of $q_{\e}(t,\cdot),$ we conclude that 
\begin{equation} \label{eq_Im0}
I_{\e,m}(t) = m(M_{\e,1}(t)) + \int_{\mathbb R} (x- {M_{\e,1}(t)})^2 r^{m}[M_{\e,1}(t)](x)q_\e(t,x){\rm d}x.
\end{equation}
The inequality \eqref{eq_expansionm} entails
\begin{equation} \label{eq_Im}
|I_{\e,m}(t) - m(M_{\e,1}(t))| \leq C{A_m} \left( {(1+ |M_{\e,1}{(t)}|^{p})} M_{\e,2}^c {(t)}+ M_{\e,2+p}^{|c|}{(t)} \right).
\end{equation}

\subsection{Time-evolution of the mean}
Firstly, we multiply \eqref{eq_rm} by $x$ and integrate. 
Using that the first moment  of $\Gamma_{\varepsilon}(x)$
vanishes,  we obtain:
\[
\varepsilon^2 \dot{M}_{\e,1} = -   \left(  \int_{\mathbb R} m(x)xq_\e({\MH \cdot},x){\rm d}x  - I_{\e,m} M_{\e,1} \right) .
\]
We compute the right-hand side $RHS$ of this identity by using taylor expansions.  We write $\Psi(x) = xm(x)$ and expand $\Psi$ and $m$ around $X=M_{\e,1}$ with a taylor formula of order $1.$ 
 For the first term,  we note that $\Psi(x) = m(M_{\e,1})M_{\e,1} + \Psi'(M_{\e,1})(x-M_{\e,1}) + r^{\Psi}[M_{\e,1}](x){(x-M_{\e,1})^2}$. Since $M_{\e,1}$ is the first moment of $q_\e$, we obtain 
\[
\int_{\mathbb R} m(x)xq_\e({\MH \cdot,} x){\rm d}x  = M_{\e,1}m(M_{\e,1}) +  \int_{\mathbb R} (x-M_{\e,1})^2 r^{\Psi}[M_{\e,1}](x) q_\e({\MH \cdot,} x){\rm d}x .
\]
For the second term of the $RHS$, we recall \eqref{eq_Im0}
and we have finally:
 \[
 RHS=  -  \int_{\mathbb R} \left[ r^{\Psi}[M_{\e,1}](x) - M_{\e,1} r^{m}[M_{\e,1}](x)  \right] (x-M_{\e,1})^2 q_\e({\MH \cdot,}x){\rm d}x .
 \]
 Noticing that 
 $\Psi''(x) = x m^{''}(x) + 2 m'(x)$ we conclude that:
 \begin{equation} \label{eq_M1}
 \varepsilon^2 \dot{M}_{\e,1}  +  m'(M_{\e,1}) M_{\e,2}^c= -  F_1
 \end{equation}
 where:
 \begin{multline*}
 F_1 =  \int_{\mathbb R} 
 \left[
  \int_0^1  (1-{\MH \sigma}) \left( m^{''}(M_{\e,1} + {\MH \sigma}(x-M_{\e,1})) {\MH \sigma} (x-M_{\e,1})^3
\right.\right.\\ 
 + 2 (m'(M_{\e,1} + {\MH \sigma}(x-M_{\e,1})) - m'(M_{\e,1}))  (x-M_{\e,1})^2 \Bigr) {\rm d}{\MH \sigma} 
 \Bigr]  q_\e({\MH \cdot,}x){\rm d}x   
 \end{multline*}

At this point, we apply (H3) to obtain:
\[
\begin{aligned}
  |m^{''}(M_{\e,1} + {\MH \sigma}(x-M_{\e,1})) | & \leq CA_m \left(  1+ |M_{\e,1}|^p + |x-M_{\e,1}|^p\right),  \\
|(m'(M_{\e,1} + {\MH \sigma}(x-M_{\e,1})) - m'(M_{\e,1}))|  & \leq C A_m |x-M_{\e,1}| \left(  {1+\,} |M_{\e,1}|^p + |x-M_{\e,1}|^p\right) .
\end{aligned}
\]
Introducing this control in the definition of $F_1$ we  infer:
\begin{equation} \label{eq_F1}
|F_1| \leq CA_m \left( (1+|M_{\e,1}|^p) M_{\e,3}^{|c|} + M_{\e,3+p}^{|c|}\right) .
\end{equation}

\subsection{Time-evolution for the second moment}
Concerning the second centered moment, we compute
\[
\begin{aligned}
\varepsilon^2 \dot{M}_{\e,2}^c 
& = \int_{\mathbb R} (x- M_{\e,1})^2 \varepsilon^2 \partial_t q_\e {\MH (\cdot,x)}dx- 2\varepsilon^2 \dot{M}_{\e,1} \int_{\mathbb R} (x-M_{\e,1}) q_\e {\MH (\cdot,x)}dx \\
&= r \int_{\mathbb R} (x-M_{\e,1})^2 (\widetilde T_{\varepsilon}[q_\e]{(\MH x)} -q_\e{\MH(\cdot,x)}) dx
\\
& \quad -  \left( \int_{\mathbb R} m(x)(x-M_{\e,1})^2 q_\e {\MH (\cdot,x)}dx  -  I_{\e,m}\int_{\mathbb R} (x-M_{\e,1})^2 q_\e{\MH (\cdot,x)} dx \right)
 \\
& = r\left(  \int_{\mathbb R} (x-M_{\e,1})^2 \widetilde T_{\varepsilon}[q_\e]{\MH (x)} dx - M_{\e,2}^c  \right)
-  \left(  \int_{\mathbb R} m(x)(x-M_{\e,1})^2 q_\e{\MH (\cdot,x)}  dx- I_{\e,m} \, M_{\e,2}^c \right).
\end{aligned}
\]
We go now into the details of the right-hand side. 
For the first term, we compute:
\begin{multline*}
\int_{\mathbb R} (x-M_{\e,1})^2 \widetilde T_{\varepsilon}[q_\e]{\MH (x)} dx \\
\begin{aligned}
&  = \int_{\mathbb R} \int_{\mathbb R} \int_{\mathbb R} \left( \left( x  - \dfrac{y+y'}{2} \right)  + \left(\dfrac{y+y'}{2} - M_{\e,1} \right)\right)^2 \Gamma_{\varepsilon} \left( x  - \dfrac{y+y'}{2} \right)  q_\e({\MH \cdot,}y) q_\e({\MH \cdot,}y'){\rm d}y {\rm d}y' {\rm d}x \\
 & = \dfrac{ \varepsilon^2}{2} +  \int_{\mathbb R} \int_{\mathbb R}  \left(\dfrac{(y-M_{\e,1})+(y'-M_{\e,1})}{2}\right)^2  q_\e({\MH \cdot,}y) q_\e({\MH \cdot,}y'){\rm d}y {\rm d}y' \\
 & =  \dfrac{\varepsilon^2}{2} +  \dfrac{M_{\e,2}^c}{2} .
 \end{aligned}
\end{multline*}
Eventually,  we obtain: 
\[
\varepsilon^2 \dot{M}_{\e,2}^c = r \left[\dfrac{\varepsilon^2}{2}  - \dfrac{M_{\e,2}^c}{2}\right] -  \left(  \int_{\mathbb R} m(x)(x-M_{\e,1})^2 q_\e{\MH (\cdot,x)}  dx - I_{\e,m} \, M_{\e,2}^c \right)
\]
We proceed with the same expansion trick as above to compute the last term in parenthesis.  We have:
\[
\begin{aligned}
 \int_{\mathbb R} m(x)(x-M_{\e,1})^2 q_\e{\MH (\cdot,x)} dx&  = m(M_{\e,1}) M_{\e,2}^c + m'(M_{\e,1}) M_{\e,3}^c 
\\
 & +  \int_{\mathbb R}\int_0^1 (1-{\MH \sigma}) m^{''}(M_{\e,1}+ {\MH \sigma}(x-M_{\e,1}))(x-M_{\e,1})^4   {\rm d} {\MH \sigma}\, q_\e({\MH \cdot,}x){\rm d}x,
 \end{aligned}
\]
that we combine with \eqref{eq_Im0} to yield:
\begin{multline*}
 \left(  \int_{\mathbb R} m(x)(x-M_{\e,1})^2 q_\e {\MH (\cdot,x)} dx - I_{\e,m}\, M_{\e,2}^c \right)  =  m'(M_{\e,1}) M_{\e,3}^c\\
 + \int_{\mathbb R} \int_0^1
(1-{\MH \sigma}) m^{''}(M_{\e,1}+ {\MH \sigma}(x-M_{\e,1})) ( (x-M_{\e,1})^4  - (x-M_{\e,1})^2 M_{\e,2}^c )  {\rm d} {\MH \sigma} \,  q_\e({\MH \cdot,}x){\rm d}x, 
\end{multline*}
and finally:
\begin{equation} \label{eq_M2}
\varepsilon^2 \dot{M}_{\e,2}^c +  \dfrac{r}{2} M_{\e,2}^c  =  r \dfrac{\varepsilon^2}{2}  +   F_2 ,
\end{equation}
where $F_2$ corresponds to the right-hand side of this latter identity and is controlled thanks to (H3) by:
\begin{multline} \label{eq_F2}
|F_2| \leq C A_m \left[  {(1+|M_{\e,1}|^p)}\Bigl( |M_{\e,2}^c|^2 +  M_{\e,4}^c\Bigr) \right.\\
\left. + {(|M_{\e,1}|+ |M_{\e,1}|^{p+1})} M_{\e,3}^{|c|} +   M^{|c|}_{\e,4+p} + M_{\e,2}^c M^{|c|}_{\e,2+p}   \right].
\end{multline}

\subsection{Time-evolution for higher-order moments}
Finally,  we compute a time-evolution equation for $M_{\e,2k}^c$ with $k$ sufficiently large. Below, we shall apply this computation with $k = k_0$ fixed by \eqref{eq_aprioriM1}.  Similarly to the case of the second order moment, we derive:
\[
\begin{aligned}
\varepsilon^2 \dot{M}_{\e,2k}^c 
& = \int_{\mathbb R} (x-M_{\e,1})^{ 2k}\varepsilon^2  \partial_t q_\e{\MH (\cdot,x)} \,dx- 2k \varepsilon^2 \dot{M}_{\e,1} \int_{\mathbb R} (x-M_{\e,1})^{2k-1} q_\e{\MH (\cdot,x)} \,dx \\
&  =r\left(  \int_{\mathbb R} (x-M_{\e,1})^{2k}  \widetilde T_{\varepsilon}[q_\e]{\MH (x)} dx - M_{\e,2k}^c \right) -   \int_{\mathbb R} m(x) (x-M_{\e,1})^{2k} q_\e{\MH (\cdot,x)} \, dx+ I_{\e,m} M_{\e,2k}^c   \\
& \qquad + 2k  ( F_1 + { m'(M_{\e,1}) }M_{\e,2}^c) M_{\e,2k-1}^c \\
 & \leq r \left[ \int_{\mathbb R} (x-M_{\e,1})^{2k} \widetilde T_{\varepsilon}[q_\e]{\MH (x) dx} - M_{\e,2k}^c \right] 
  +  m( M_{\e,1}) M_{\e,2k}^c + ( I_{\e,m} - m(M_{\e,1})) M_{\e,2k}^c  \\
  &+ 2k   ( F_1 + {m'(M_{\e,1}}) M_{\e,2}^c) M_{\e,2k-1}^c  .
\end{aligned}
\]
Here we have used the fact that ${\MH m(x)(x-M_{\e,1})^{2k} \geq 0}$. Concerning the first term in the bracket multiplied by $r$ we have actually an algebra based on polynomial expansions. To compute this term, we note that   all odd moments of $\Gamma_{\varepsilon}$ vanish.  By a scaling argument,  we have also that the $2l$ moments of $\Gamma_{\varepsilon}$ read $\sigma_l \varepsilon^{2l}$ with a series of constants $(\sigma_l)_{l\in \mathbb N^*}$ independent of $\varepsilon >0$ such that $\sigma_1 = 1/2.$
We now compute   
\begin{multline*}
\int (x-M_{\e,1})^{2k}  {\MH \widetilde T}_{\varepsilon}[q_\e]{\MH (x) dx}  = \\
\begin{aligned}
 &= 
\int_{\mathbb R} \int_{\mathbb R} \int_{\mathbb R} \left( \left( x  - \dfrac{y+y'}{2} \right)  + \left(\dfrac{y+y'}{2} - M_{\e,1} \right)\right)^{2k} \Gamma_{\varepsilon} \left( x  - \dfrac{y+y'}{2} \right)  q_\e({\MH \cdot,}y) q_\e({\MH \cdot,}y'){\rm d}y {\rm d}y' {\rm d}x \\
& =
\sum_{l=0}^{k} \sum_{j=0}^{2l} \sigma_{k-l} \varepsilon^{2(k-l)}
\dfrac{1 }{4^l}\begin{pmatrix} 2k \\ 2l  \end{pmatrix}  \begin{pmatrix} 2l \\ j   \end{pmatrix} 
\int_{\mathbb R} \int_{\mathbb R} 
(y-M_{\e,1})^{2l-j}(y'-M_{\e,1})^{j} q_\e({\MH \cdot,}y) q_\e({\MH \cdot,}y'){\rm d}y {\rm d}y'\\
&= \dfrac{2}{4^{k}} M_{\e,2k}^c 
+  \sum_{l=0}^{k-1} \sum_{j=0}^{2l}  \sigma_{k-l}\varepsilon^{2(k-l)}
\dfrac{1 }{4^l}
\begin{pmatrix} 2k \\ 2l  \end{pmatrix}  
\begin{pmatrix} 2l \\ j   \end{pmatrix} 
 M_{\e,2l-j}^{{c}} M_{\e,j}^{{c}}
+ 
\sum_{j=2}^{2k-2} 
\dfrac{1 }{4^k} \begin{pmatrix} 2k \\ j   \end{pmatrix}  
M_{\e,2k-j}^{{c}} M_{\e,j}^{{c}} ,
\end{aligned} 
\end{multline*}
where we use the convention that 
$M_{\e,0}^c = 1$ and $M_{\e,1}^c=0.$
Eventually, we obtain:
\begin{equation} \label{eq_M4}
\varepsilon^2 \dot{M}_{\e,2k}^c 
+  \left[ r\left(1- \dfrac{2}{4^k} \right) - m(M_{\e,1})   \right]
M_{\e,2k}^c \,\leq \, F_{2k}
\end{equation}
where  
\[
\begin{aligned}
F_{2k} \leq \, & 2k  \left( |F_1| + {C}A_m  {(|M_{\e,1}|+ |M_{\e,1}|^{p+1})} M_{\e,2}^c\right) |M_{\e,2k-1}^c|  && \text{[Time direvative of $\dot{M}_{\e,1}$]}\\
&  + CA_m M_{\e,2k}^c \left( {(1+|M_{\e,1}|^p)} M_{\e,2}^c + M_{\e,2+p}^{|c|}  \right) && 
\text{[Remainder of $I_{\e,m}$]}\\
& 
+  \sum_{l=0}^{k-1} \sum_{j=0}^{2l}  \sigma_{k-l}\varepsilon^{2(k-l)}
\dfrac{1 }{4^l}
\begin{pmatrix} 2k \\ 2l  \end{pmatrix}  
\begin{pmatrix} 2l \\ j   \end{pmatrix} 
M_{\e,2l-j}^{{c}} M_{\e,j}^{{c}} 
\\
&+ 
\sum_{j=2}^{2k-2} 
\dfrac{1 }{4^k} \begin{pmatrix} 2k \\ j   \end{pmatrix}  
M_{\e,2k-j}^{{c}}  M_{\e,j}^{{c}}. 
\end{aligned}
\]

\subsection{The proof of Theorem \ref{thm_ode}}
We are now in position to prove Theorem \ref{thm_ode}. 
%For this we introduce the notation $M_1^{\infty}$ corresponding to the solution of the ode:
%\begin{equation} \label{eq_M1infini}
%\left\{
%\begin{aligned}
%\dot{M}_1^{\infty} & = -s m'(M_{1}^{\infty}) \\
%M_1^{\infty}(0) & = M_1(0)
%\end{aligned}
%\right.
%\end{equation}
We first recall the function $\bar Z_\e$ given by \fer{eq:Ze}. Due to the   local   strict-convexity assumption on $m$, we have $m'({\MH \bar Z_\e}) \geq A_{0} |{\MH \bar Z_\e}|$  as long as $Z_\e \in  (-L,L).$ In particular, thanks to Assumption \eqref{eq_initialdata},  we have $\bar Z_\e(0)\in (-L,L)$. Hence, we have the following decay 
\[
|\bar Z_\e(t)| \leq \exp (-A_{0}t) |\bar Z_\e(0)|.
\] 
We note also here that we have the following classical result. Assume that $Y\geq 0$ satisfies:
\[
\varepsilon^2 \dot{Y} + k Y \leq  F
\]
with $F \in L^{\infty}(0,T).$ Then there holds:
\beq
\label{ODE-in}
|Y(t)| \leq \dfrac{\|F\|_{L^{\infty}(0,T)}}{k}  {+Y(0)e^{\f {-kt}{\e^2}}}, \quad \forall \, t \in [0,T].
\eeq
Combining this property with the estimates obtained {\MH previously},  we will prove Theorem \ref{thm_ode}.  For this,  given $\varepsilon >0$ and   $K_0,K_1,K_2,$ (to be chosen below) we set:
\[
T_{\e} := \sup \{ t \in [0,\infty) \text{ s.t.   \eqref{eq_M4close}--\eqref{eq_M1close} hold on $[0,t]$} \}.
\]
Our aim is to find a $K_0,K_1,K_2$ so that for small $\varepsilon$
we have $T_{\e} = +\infty.$ We show that such a choice is possible with the following proposition:
\begin{proposition} \label{prop_openclose}
Assume (H1)--(H4) are in force. We can find $K_0$, $K_1$ and $K_2$ large enough,   such that for $\delta \in (0,1)$ and arbitrary small $\varepsilon$ there holds:
\begin{itemize}
\item[i)] $T_{\e} >0$ 
\item[ii)] if $T_\e \in (0,\infty),$ we have 
\begin{align}
 \label{eq_M4open}M^{c}_{\e,2k_0}(t) &  <  K_2 \varepsilon^{2k_0} \\
\label{eq_M2open} 
 \left | M_{\e,2}^{c}(t) - \varepsilon^2 \right | & <  K_1\varepsilon^2 \left( \varepsilon^{\MH 1-\delta} + \exp(-rt/2\varepsilon^2) \right) \\
\label{eq_M1open} \left | M_{\e,1}(t) {- \bar Z_\e(t)} \right |  &  <  K_0  \varepsilon^{\MH 1-\delta}. 
\end{align}
on $[0,T_\e].$
\end{itemize}
\end{proposition}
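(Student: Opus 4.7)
The plan is a standard bootstrap argument on the three ODEs \eqref{eq_M1}, \eqref{eq_M2}, \eqref{eq_M4}. Part (i) is immediate: assumption \eqref{eq_initialdata} gives $M_{\e,2k_0}^c(0)\leq C_1\e^{2k_0}$ and $M_{\e,1}(0)-\bar Z_\e(0)=0$ (by the very definition of $\bar Z_\e$), so for any $K_0,K_1,K_2$ chosen sufficiently large (depending on $C_1$) the strict estimates \eqref{eq_M4open}--\eqref{eq_M1open} hold at $t=0$; continuity of the moments in time (Proposition \ref{prop:Cauchy}) then propagates them on a small interval, so $T_\e>0$. For part (ii), one assumes the closed estimates on $[0,T_\e]$ and derives the strict ones on the same interval. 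A first consequence is that $M_{\e,1}(t)\in[-L,L]$ for $t\in[0,T_\e]$ once $\e$ is small, because $\bar Z_\e$ remains trapped in $(-L,L)$ with exponential contraction to $0$ (from $m'(\bar Z_\e)\bar Z_\e\geq A_0\bar Z_\e^2$ by (H1)). This legitimizes both the local coercivity of $m$ and the lower bound $r(1-2/4^{k_0})-m(M_{\e,1})\geq\eta>0$ coming from \eqref{eq_aprioriM1}.

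The crux is to close \eqref{eq_M4open}, which I would do by applying the elementary principle \eqref{ODE-in} to \eqref{eq_M4} with $k=k_0$, dissipation coefficient $\eta$, and forcing $F_{2k_0}$. The delicate point is that the polynomial product sum $\sum_{j=2}^{2k_0-2}\binom{2k_0}{j}4^{-k_0}M_{\e,2k_0-j}^cM_{\e,j}^c$ inside $F_{2k_0}$ has total combinatorial weight close to $1$, so the naive interpolation $M_{\e,j}^{|c|}\leq(K_2\e^{2k_0})^{j/(2k_0)}$ would barely match $\eta$. I would instead use a two-point H\"older interpolation between the closed bound $M_{\e,2}^c\leq C\e^2$ (given by \eqref{eq_M2close}) and $M_{\e,2k_0}^c\leq K_2\e^{2k_0}$ to obtain
\[
M_{\e,j}^{|c|}\leq C\,K_2^{(j-2)/(2k_0-2)}\e^{j},\qquad 2\leq j\leq 2k_0,
\]
so that each product $M_{\e,2k_0-j}^cM_{\e,j}^c$ is bounded by $K_2\e^{2k_0}$ times the \emph{small} factor $K_2^{-2/(2k_0-2)}$, which can be made arbitrarily small by enlarging $K_2$. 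The remaining contributions in $F_{2k_0}$ (the $\e^{2(k_0-l)}$-weighted sums for $l<k_0$, the Taylor remainder in $I_{\e,m}$, and the time-derivative piece $2k_0(F_1+m'(M_{\e,1})M_{\e,2}^c)M_{\e,2k_0-1}^c$) either gain a genuine $\e^2$ factor or involve moments up to $M_{\e,3+p}^{|c|}$ with interpolation exponent strictly below one; this is precisely where $k_0>3+\lceil p/2\rceil$ enters. Dividing by $\eta$ and accounting for the initial term $C_1\e^{2k_0}e^{-\eta t/\e^2}$ yields $M_{\e,2k_0}^c(t)\leq\bigl[(r/\eta)K_2^{-2/(2k_0-2)}+o(1)\bigr]K_2\e^{2k_0}$, strictly less than $K_2\e^{2k_0}$ for $K_2$ large and $\e$ small.

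For \eqref{eq_M2open} I would set $Y_\e:=M_{\e,2}^c-\e^2$ and read \eqref{eq_M2} as $\e^2\dot Y_\e+(r/2)Y_\e=F_2$; interpolating the moments in \eqref{eq_F2} via Step A yields $|F_2|\leq C(K_2,L)\e^3$ (the leading $\e^3$ coming from $M_{\e,3}^{|c|}$), and \eqref{ODE-in} with $|Y_\e(0)|\leq C\e^2$ closes the strict estimate as soon as $K_1$ is large enough relative to $C_1$ and $\e$ is small. For \eqref{eq_M1open} I would introduce $D_\e:=M_{\e,1}-\bar Z_\e$; subtracting \eqref{eq:Ze} from \eqref{eq_M1} and writing $m'(M_{\e,1})-m'(\bar Z_\e)=D_\e\int_0^1 m''(\bar Z_\e+sD_\e)\,\mathrm ds$ produces
\[
\e^2\dot D_\e+\e^2 A_\e(t)\,D_\e=-m'(M_{\e,1})\,Y_\e-F_1,\qquad A_\e(t)\geq A_0>0,
\]
the coercivity coming from (H1) and from $\bar Z_\e,M_{\e,1}\in(-L,L)$. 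Using Step B for $Y_\e$, the analogous bound $|F_1|\leq C(K_2,L)\e^3$ from \eqref{eq_F1}, and $D_\e(0)=0$, a direct Gronwall estimate yields $|D_\e(t)|\leq(CK_1/A_0)\e^{1-\delta}+o(\e^{1-\delta})$, strictly below $K_0\e^{1-\delta}$ for $K_0$ large (depending on $K_1$ and $A_0\sim L$) and $\e$ small.

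The main obstacle is the tight accounting in Step A just described: the $2k_0$-th central moment expansion of $\widetilde T_\e[q_\e]$ gives a polynomial sum whose combinatorial weight is exactly complementary to the $2\cdot 4^{-k_0}$ piece providing the coercivity $\eta$, and only the refined two-point interpolation which exploits the near-Gaussian value $M_{\e,2}^c\approx\e^2$ from Step B provides the decisive sub-linear gain in $K_2$. Consistently with the paper's announcement, the order of the choices has to be: $K_1$ first (depending only on $C_1$) to close Step B, then $K_2$ large (depending on $K_1$, $C_1$, $r$ and $\eta$) to close Step A, then $K_0$ large (depending on $K_1$ and $L$) to close Step C, and finally $\e$ restricted small depending on $K_0,K_2$ and $L$.
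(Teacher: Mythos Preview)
Your proposal is correct and follows essentially the same route as the paper: the same two-point H\"older interpolation between $M_{\e,2}^c$ and $M_{\e,2k_0}^c$ (this is the paper's inequality \eqref{eq_fullscale}) to extract the sub-linear power $K_2^{(2k_0-4)/(2k_0-2)}$ from the polynomial product sum, the same use of \eqref{ODE-in} on \eqref{eq_M2} and \eqref{eq_M4}, and the same hierarchy $K_1\to K_2\to K_0\to\varepsilon$ for closing the bootstrap. Two cosmetic differences: for $M_{\e,1}$ the paper linearizes $m'(M_{\e,1})-m'(\bar Z_\e)$ around $\bar Z_\e$ and runs an $L^2$ energy estimate (paying a third-derivative remainder), whereas your integral-form coefficient $A_\e(t)=\int_0^1 m''(\bar Z_\e+sD_\e)\,ds\geq A_0$ gives the coercivity directly and is slightly cleaner; and in your Step~A summary the prefactor you write as $(r/\eta)K_2^{-2/(2k_0-2)}$ should carry the $K_1$-dependent interpolation constant and the binomial weights, but since $K_2$ is chosen after $K_1$ this changes nothing.
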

We notice that the inequalities in \eqref{eq_M4close}--\eqref{eq_M1close} are $\leq$ while in \eqref{eq_M4open}--\eqref{eq_M1open} they are $<$. 
 With the above proposition,  we may  conclude by a standard contradiction argument that \eqref{eq_M4close}-\eqref{eq_M2close}-\eqref{eq_M1close} hold on $[0,\infty)$  since $M_{\e,1}$, $M_{\e,2}^c$ and $M_{\e,2k_0}^c$ are time-continuous.

\medskip

\begin{proof}
Thanks to assumption \eqref{eq_initialdata}, item $i)$ only amounts to choosing $K_0,K_1,K_2$ sufficiently large w.r.t. initial data. 
Indeed, by the second part of \eqref{eq_initialdata}, we have $M^{c}_{\e,2k_0}(0) \leq C_1 \varepsilon^{2k_0}$ so that we have \eqref{eq_M4close} on some initial time-interval whenever $K_2 > C_1.$  Then, by interpolation, we have 
\[
|M^{c}_{\e,2}(0) - \varepsilon^2 | \leq (C_1^{\frac 1k_0} +1) \varepsilon^2.
\] 
Since the right-hand side of \eqref{eq_M2close} is larger than $K_1 \varepsilon^2$ intially, we conclude that
\eqref{eq_M2close} holds on some initial time-interval whenever $K_1 >(C_1^{\frac 1k_0} +1).$ Finally, $M_{\e,1}(0) -\bar{Z}_{\e}(0) =0$ 
so that \eqref{eq_M1close} is also true for $t$ small and $K_0$ arbitrary.

\medskip

We focus now on the proof of item $ii)$.
For the proof, we assume that $K_0,K_1,K_2$ are constructed and we explain the restrictions that they must satisfy so that our computations hold true.   We emphasize that we
shall choose first $K_1$ depending on data $C_1$,  then $K_2$ depending on $K_1,C_1$ and finally  $K_0$ depending on $K_1,L.$ We restrict $\varepsilon$ according to $K_2,K_0$ and $L$.
We fix some time $T_\e >0$ such that \eqref{eq_M4close}--\eqref{eq_M1close} hold true on $[0,T_{\e}]$.
We note first that with \eqref{eq_M2close}--\eqref{eq_M1close} we have directly, when $\varepsilon$
is sufficiently small:
\begin{equation} \label{eq_aprioriM2}
|M_{\e,1}(t)| \leq  L, \qquad 
|M_{\e,2}^c(t)| \leq 2 K_1 \varepsilon^2 \quad \text{ on $(0,T_{\e})$} .
\end{equation}
 Fixing from now on that $K_2>2K_1$ the control \eqref{eq_M4close}--\eqref{eq_M2close} also entails that 
\begin{equation} \label{eq_fullscale}
M_{\e,l}^{|c|} \leq   K_2^{\frac{l-2}{{2k_0-2}}} {(2 K_1)^{\frac{2k_0-l}{{2k_0-2}}}} \varepsilon^{l}\leq K_2 \varepsilon^{l} \,, \quad \forall \, l \in \{2,\ldots,2k_0-1\}.  
\end{equation}
We start now with the computation of $M_2^c$ by noting that \eqref{eq_M2} rewrites:
\begin{equation}
\label{ODE-M2}
\left\{
\begin{aligned}
& \dot{M}_{\e,2}^ c  + \dfrac{r}{2\varepsilon^2} M_{\e,2}^c  = \dfrac{r}{2} +  \dfrac{2}{\varepsilon^2} F_2  \\
& {M}_{\e,2}^c (0)  =  \alpha_{\e,0} \varepsilon^2,
\end{aligned}
\right.
\end{equation}
where, $\alpha_{\e,0} \leq C_1$  and 
\[
|F_2| \leq C A_m \left[   ( 1 + |M_{\e,1}|^p )  \Bigl( |M_{\e,2}^c|^2 +  M_{\e,4}^c\Bigr) +  (|M_{\e,1}| + |M_{\e,1}|^{p+1})   M_{\e,3}^{|c|} +   M^{|c|}_{\e,4+p} + M_{\e,2}^c M^{|c|}_{\e,2+p}   \right].
\]
Plugging the controls \eqref{eq_aprioriM2}--\eqref{eq_fullscale} into this identity
we obtain {\MH that, on $(0,T_{\e}),$ we have:}
\[
\begin{aligned}
|F_2| & \leq CA_m  \left[   (1+  L^p)  \Bigl(K_2^2 \varepsilon^4 +  K_2 \varepsilon^4 \Bigr) +   ({ L + L^{p+1})}  K_2 \varepsilon^3 +  K_2 \varepsilon^{4+p} +  K_2^2 \varepsilon^{4+p} \right]\\
& \leq  C A_m (1+L^{p+1}) K_2^2 \varepsilon^3.
\end{aligned}
\]
By direct integration of \fer{ODE-M2} we conclude that:
\[
\begin{aligned}
M_{\e,2}^c(t) & = \alpha_{\e,0}  \varepsilon^2 \exp\left[ -\dfrac{r}{2\varepsilon^2}t \right] + \int_0^t  \exp\left[ \frac{r}{2\varepsilon^2} (\sigma-t) \right]  \left[\dfrac r2 + \dfrac{2 }{\varepsilon^2} F_2(\sigma) \right]{\rm d}{\sigma} \\
& = \varepsilon^2  +  \varepsilon^2  (\alpha_{\e,0} - 1 )  \exp\left[ - \frac{r}{2\varepsilon^2}t\right]+ \dfrac{2 }{\varepsilon^2}\int_0^t  \exp\left[ \frac{r}{2\varepsilon^2} (\sigma-t) \right]  F_2(\sigma) {\rm d}{\sigma}  .
\end{aligned}
\]
We also note that the bound above on $F_2(t)$ leads to
\[
\begin{aligned}
\left | \dfrac{2 }{\varepsilon^2}\int_0^t  \exp\left[ \frac{r}{2\varepsilon^2} (\sigma-t) \right]  F_2(\sigma) {\rm d}{\sigma} \right|  &  \leq 
\dfrac{4}{r} C A_m (1+L^{p+1}) K_2^2 \varepsilon^3 \\
& \leq \frac{K_1}{2} \varepsilon^{\MH 3-\delta},
\end{aligned}
\]
when $\varepsilon$ is sufficiently small. 
We obtain finally that, provided $\varepsilon$ is sufficiently small
\[
|M_{\e,2}^c(t) - \varepsilon^2| \leq  \varepsilon^2  (C_1 + 1 )  \exp\left[ - \frac{r}{2\varepsilon^2}t\right] +  \frac{K_1}{2}   \varepsilon^{\MH 3-\delta},
\]
and we have \eqref{eq_M2open} if $K_1$ is chosen sufficiently large with respect to   $C_1.$

\medskip

We can then rewrite \eqref{eq_M4} thanks to \eqref{eq_aprioriM1} and  \eqref{eq_aprioriM2}:
\begin{equation} \label{eq_M4bis}
\varepsilon^2 \dot{M}_{\e,2k_0}^c + \eta  M_{\e,2k_0}^c \leq F_{2k_0},
\end{equation}
where we recall that:
\[\begin{aligned}
F_{2k_0} \leq  & 2k_0  \left( |F_1| + {C}A_m  (|M_{\e,1}| +   |M_{\e,1}|^{p+1})  M_{\e,2}^c\right) |M_{\e,2k_0-1}^c|  \\
&  + CA_m M_{\e,2k_0}^c \left(  (1 + |M_{\e,1}|^p)  M_{\e,2}^c + M_{\e,2+p}^{|c|}   + M_{\e,1}^{2} + M_{\e,1}^{p+2}\right) \\
& 
+  \sum_{l=0}^{k_0-1} \sum_{j=0}^{2l}  \sigma_{k_0-l}\varepsilon^{2(k_0	-l)}
\dfrac{1 }{4^l}
\begin{pmatrix} 2k_0 \\ 2l  \end{pmatrix}  
\begin{pmatrix} 2l \\ j   \end{pmatrix} 
 M_{\e,2l-j}^{{c}} M_{\e,j}^{{c}}
+ 
\sum_{j=2}^{2k_0-2} 
\dfrac{1 }{4^{k_0}}  \begin{pmatrix} 2k_0 \\ j   \end{pmatrix}  
M_{\e,2k_0-j}^{{c}}  M_{\e,j}^{{c}} ,
\end{aligned}
\]
with 
\[
|F_1| \leq CA_m (  (1 + |M_{\e,1}|^{p} ) M_{\e,3}^{|c|} + M_{\e,3+p}^{|c|}).
\]
Introducing again {Assumption \fer{eq_initialdata}}, \eqref{eq_aprioriM2} and \eqref{eq_fullscale} we obtain that:
\begin{multline*}
2k_0  \left( |F_1| + {C}A_m  (|M_{\e,1}| + |M_{\e,1}|^{p+1})  M_{\e,2}^c\right) |M_{\e,2k_0-1}^c|  \\
+   CA_m M_{2k_0}^c \left( ( 1+ |M_{\e,1}|^p)   M_{\e,2}^c + M_{\e,2+p}^{|c|}  \right) 
   \leq C A_m (1+L^{p+1}) K_2^2  \varepsilon^{2k_0+1}.  
\end{multline*}
{\MH Choosing $\varepsilon$ sufficiently small, we can then make the left-hand side of this inequality smaller than $\alpha K_2\varepsilon^{2k_0}$ with $\alpha$ arbitrary small.}
Concerning the last line of $F_{2k_0}$ we use the first interpolation inequalities in \eqref{eq_fullscale}  (since this quantity only involves moments of order less than $2k_0-1$) to yield (we recall that we keep only larger powers of $K_1,K_2$ since they are assumed larger than 1): 
\begin{multline*}
\sum_{j=2}^{2k_0-2} 
\dfrac{1 }{4^{k_0}}  \begin{pmatrix} 2k_0 \\ j   \end{pmatrix}  
M_{\e,2k_0-j}^{{c}} M_{\e,j}^{{c}}
+ 
\sum_{l=0}^{k_0-1} \sum_{j=0}^{2l}  \sigma_{k_0-l}\varepsilon^{2(k_0	-l)}
\dfrac{1 }{4^l}
\begin{pmatrix} 2k_0 \\ 2l  \end{pmatrix}  
\begin{pmatrix} 2l \\ j   \end{pmatrix} 
M_{\e,2l-j}^{{c}} M_j^{\e,{c}}\\
\leq 
C_{k_0} K_1^{\frac{2k_0}{k_0-1}} K_2^{{\frac{k_0-2}{k_0-1}}}  \varepsilon^{2k_0}
\end{multline*}
We point out that the constant $C_{k_0}$ depends on $k_0$ also through the explicit quantities $\sigma_1,\ldots,\sigma_{k_0}.$  {\MH Finally,} we can   choose  $K_2$  large with respect to  $K_1$  to conclude 
that:
\[
F_{2k_0} \leq  \dfrac{\eta}{2} K_2 \varepsilon^{2k_0} \text{ \MH on $(0,T_{\varepsilon})$}.
\]
Integrating \eqref{eq_M4bis} and using Assumption \fer{eq_initialdata} entails that:
\[
M_{\e,2k}^c{(\MH t)} \leq C_1 \varepsilon^{2k_0} + \dfrac{K_2}{2} \varepsilon^{2k_0},
\]
and we have \eqref{eq_M4open} when $K_2$ is again sufficiently large with respect to $C_1$.

\medskip

We end up with the computation of $M_{\e,1}.$ We can rewrite 
\eqref{eq_M1} as follows
\[
 \dot{M}_{\e,1}  +   m'(M_{\e,1})  =  \tilde{F}_1(t)
\]
where thanks to  \fer{eq_M4close}, \eqref{eq_M2close} and \eqref{eq_aprioriM2}:
\[
\begin{aligned}
|\tilde{F}_1(t)|  & \leq \dfrac{|F_1|}{\varepsilon^2} +  \dfrac{1}{\varepsilon^2} |  m'(M_{\e,1})| \,|M_{\e,2}^c - \varepsilon^2| \leq  
{\MH CA_m (1+L^{p+1})K_1}{\left( \varepsilon^{\MH 1-\delta}  + \exp(-rt/2\varepsilon^2) \right)}.
\end{aligned}
\]
Hence $N_1 = M_{\e,1} - \bar Z_\e$ satisfies 
\[
\dot{N}_1 +   m''(\bar Z_\e) N_1 = \tilde{F}_1 -   (m'(M_{\e,1}) -m'(\bar Z_\e) -  m''(\bar Z_\e) N_1 ), \qquad N_1(0) = 0.
\]
Moreover,  we can write $m'(M_{\e,1}) -m'(\bar Z_{\e}) = m''(Z_{\theta})N_1$ for some $Z_{\theta}$ between $\min(M_{\e,1},\bar Z_{\e})$ and $\max(M_{\e,1},\bar Z_{\e})$ and use again a finite difference theorem to compute $m''(Z_{\theta}) - m''(\bar Z_{\e})$. This entails thanks to \fer{eq_M1close}:
\[
|   m'(M_{\e,1}) -m'(\bar Z_\e) -  m''(\bar Z_\e) N_1  |
\leq C_m  K_0 \varepsilon^{\MH 1-\delta} |N_1|. 
\]
with a constant $C_m$ depending on $m^{'''}.$
Consequently, we have by a standard energy estimate
\[
\dfrac{\textrm{d}}{\textrm{d}t}  \left[ \dfrac{|N_1|^2}{2}\right] + A_0 |N_1|^2  \leq  (\tilde{F}_1 + C_m K_0 \varepsilon^{1-}|N_1|) N_1 .
\]
Eventually, we obain that, for $\varepsilon$ sufficiently small: 
\[
\begin{aligned}
\dfrac{|N_1(t)|^2}{2}&  \leq  {\MH \dfrac{C}{A_0}}\int_0^{t}  \exp(-A_0(t-s)) |\tilde{F_1}(s)|^2 {\rm d}s \\
&  \leq  {\MH \dfrac{C A_m^2 (1+L^{p+1})^2 K_1^2 }{A_0}} \left(  \int_0^t \exp(-A_0(t-s)) \e^{2(1-\delta)} {\rm d}s + \int_0^t \exp(-rs/{\varepsilon^2}) {\rm d}s\right)\\
& \leq  {\MH \dfrac{C A_m^2 (1+L^{p+1})^2 K_1^2 }{A_0}}  \left( \dfrac{\e^{2(1-\delta)}}{A_0} + \dfrac{\e^2}{r} \right)
\end{aligned}
\]
We obtain \eqref{eq_M1close} provided $K_0$ is chosen sufficiently large with respect to $K_1,$ {\MH $A_0$} and $L$.
This concludes the proof.
{\qed}
\end{proof}

%%%%%%%%%%%%
\section{ Wasserstein estimates;  The proof of Theorem \ref{thm:main}-(i).}
\label{sec:Was}
%%%%%%%%%%%%%%%%

In this section, we assume that $q_{\e}$ is a global solution to \eqref{eq_rm} with sufficiently large bounded moments. We assume that 
the conclusions of {Theorem \ref{thm_ode}} hold true and we recall that notations $g_{\varepsilon},\bar{Z}_{\e}$ are defined in the introduction. Again, we use throughout the section the symbol $C$ for a harmless constant that may vary between lines. We reserve the symbol $K$ for a constant that depend only on $L$, $K_0$, $K_1$ and $K_2$. This constant may also vary between lines.

\medskip

We first remark that $g_{\varepsilon}$ solves
\begin{equation} \label{eq_evolg}
\varepsilon^2 \partial_t g_{\varepsilon}(t,x)
= r(\tilde{T}[g_{\varepsilon}](t,x) - g_{\varepsilon}(t,x)) -   (x-\bar{Z}_{\e}(t)) m'(\bar{Z}_{\e}(t)) g_{\varepsilon}(t,x) .
\end{equation}
We will use below that the last term writes:
\[
\begin{aligned}
 (x-\bar{Z}_{\e})m'(\bar{Z}_{\e}) 
& =  m(x) - m(\bar{Z}_{\e} )  - r^{m}[{\bar{Z}_{\e}}](x)(x-\bar{Z}_{\e})^2
\end{aligned}
\]
where:
\[
| r^{m}[\bar Z_{\e}](x)| \leq C A_m \left(  1  + |\bar{Z}_{\e}|^{p} + |x-\bar{Z}_{\e}|^{p}\right).
\]
We also define  
\begin{equation} \label{eq_gbare}
\bar{g}_{\varepsilon}(t,x) =  \dfrac{1}{\sqrt{2\pi}\varepsilon} \exp \left(-  \dfrac{(x-{M_{\e,1}}(t))^2}{2\varepsilon^2}\right)
\end{equation}
with ${M_{\e,1}}$ given by \fer{moments}.  

\medskip

We propose now to derive some duality estimates for the difference $(q_{\e} - g_{\varepsilon})$ by computing:
\[
I_{\phi} (t)= \int_{\mathbb R} \phi (x)(q_\e(t,x)-g_{\varepsilon}(t,x))dx,
\]
for arbitrary lipschitz bounded  (possiblity smooth) test-function such that $\phi(0)=0$.  Multiplying \eqref{eq_rm} and \eqref{eq_evolg} by $\phi $ 
and taking the difference, we get 
\begin{equation} \label{eq_Iphi}
\varepsilon^2 \dot{I}_{\phi} = r  T_r - r I_{\phi}   -   T_{s},
\end{equation}
where 
\[
\begin{aligned}
T_r &= \int_{\mathbb R} \phi (\tilde{T}_{\varepsilon}[q_\e] - \tilde{T}_{\varepsilon}[g_{\varepsilon}]) ,\\
T_s & = \left( \int_{\mathbb R} mq_\e \phi  - \int_{\mathbb R} mg_{\varepsilon} \phi \right)  -
\left[  \left( \int_{\mathbb R} mq_\e \right) \left( \int_{\mathbb R} q_\e \phi \right) - m({\MH \bar{Z}_{\e}})
 \left( \int_{\mathbb R} g_{\varepsilon} \phi \right)\right]\\
 & \quad  {\MH -} \int_{\mathbb R} (x-\bar{Z}_{\e})^2 r^{m}[\bar{Z}_{\e}]{\MH(x)} g_{\varepsilon}{\MH (\cdot,x)} \phi{\MH (x)}dx \\
 & = T_{s}^{(a)} - T_{s}^{(b)} {+}  T_{s}^{(c)}.
 \end{aligned}
\]
We proceed by controlling now $T_r$ and $T_s.$ Concerning  $T_r$, we write
\[
T_r = \int_{\mathbb R} \phi (\tilde{T}_{\varepsilon}[q_\e] - \tilde{T}_{\varepsilon}[\bar{g}_{\varepsilon}]) + \int_{\mathbb R} \phi ( \tilde{T}_{\varepsilon}[\bar{g}_{\varepsilon}] -\tilde{T}_{\varepsilon}[{g}_{\varepsilon}]  ) .
\]
For the first term in $T_r$ we use Tanaka's estimate \cite[Appendix 4.2]{GR:17} and obtain
\[
\begin{array}{rl}
\left|\displaystyle \int_{\mathbb R} \phi (\tilde{T}_{\varepsilon}[q_\e] - \tilde{T}_{\varepsilon}[\bar{g}_{\varepsilon}]) \right|
&\leq \|\phi'\|_{L^{\infty}}  W_1(\tilde{T}_{\varepsilon}[q_\e],\tilde{T}_{\varepsilon}[\bar{g}_\e]) \leq \|\phi'\|_{L^{\infty}}  W_2(\tilde{T}_{\varepsilon}[q_\e],\tilde{T}_{\varepsilon}[\bar{g}_\e])
\\
& \leq \dfrac{\|\phi'\|_{L^{\infty}}}{\sqrt{2}} W_2(q_\e,\bar{g}_{\varepsilon})  \leq \dfrac{\|\phi'\|_{L^{\infty}}}{\sqrt{2}} \left( W_2(q_\e,{g}_{\varepsilon})+ W_2(g_{\varepsilon},\bar{g}_{\varepsilon})\right),
\end{array}
\]
where:
\[
\begin{aligned}
W_2(g_{\varepsilon},\bar{g}_{\varepsilon})&  \leq \left( \int_{\mathbb R}\int_{\mathbb R} |x_1-x_2|^2 g_{\varepsilon}(x_1)\bar{g}_{\varepsilon}(x_2) dx_1dx_2 \right)^{\frac 12} \\
&  \leq C \left( \int_{\mathbb R}\int_{\mathbb R} |(x_1  -\bar{Z}_{\e}) -(x_2-M_{\e,1})|^2 g_{\varepsilon}({\MH \cdot,}x_1)\bar{g}_{\varepsilon}({\MH \cdot,}x_2)  dx_1dx_2\right)^{\frac 12} 
+ C|M_{\e,1}-\bar{Z}_{\e}| \\
&  \leq C \left( 2\int_{\mathbb R}\int_{\mathbb R} (|x_1-\bar{Z}_{\e}|^2 + |x_2- M_{\e,1}|^2) g_{\varepsilon}({\MH \cdot,}x_1)\bar{g}_{\varepsilon}({\MH \cdot,}x_2)  dx_1dx_2\right)^{\frac 12} +  C|M_{\e,1}-\bar{Z}_{\e}| \\
&  \leq C \varepsilon + C |M_{\e,1}-\bar{Z}_{\e}| . 
\end{aligned}
\]
Furthermore,  for any $\pi$ in the transference plane between $q_\e$ and $g_{\varepsilon},$ we have  
 \begin{multline*}
 \int_{\mathbb R}\int_{\mathbb R} |x_1-x_2|^2 d\pi(x_1,x_2) \\
\begin{aligned}
&  \leq \left( \int_{\mathbb R}\int_{\mathbb R} |x_1-x_2| d\pi(x_1,x_2)\right)^{\frac 23}\left( \int_{\mathbb R}\int_{\mathbb R} |x_1-x_2|^4 d\pi(x_1,x_2)\right)^{\frac 13}\\
& \leq {C}  \left( \int_{\mathbb R}\int_{\mathbb R} |x_1-x_2| d\pi(x_1,x_2)\right)^{\frac 23}
\left(\left[\int_{\mathbb R}\int_{\mathbb R} ( |x_1 - M_{\e,1}|^4 +|x_2-\bar{Z}_{\e}|^4) d\pi(x_1,x_2)  \right]^{\frac 13} \right. \\ 
& \left. \qquad + |M_{\e,1}- \bar{Z}_{\e}|^{\frac 43} \right)\\
& \leq  {c_0^2 }  \left( \int_{\mathbb R}\int_{\mathbb R}|x_1-x_2| d\pi(x_1,x_2)\right)^2  +  C \left( \dfrac{M_{\e,4}^c + \varepsilon^4 + |M_{\e,1}-\bar{Z}_{\e}|^{4}}{{c_0^2 }}\right)^{\f 12}
\end{aligned} 
\end{multline*}
where $c_0$ can be chosen arbitrarily small.
With a classical inf argument, we obtain 
\[
 {W_2(q_\e,g_{\varepsilon})^2} \leq  {c_0^2 } W_1(q_\e,g_{\varepsilon})^2 + C  \left( \dfrac{ M_{\e,4}^c + \varepsilon^4 + |M_{\e,1}-\bar{Z}_{\e}|^{4}}{{c_0^2 }}\right)^{\f12}.
\]
It follows that
\[
W_2(q_\e,g_{\varepsilon})  \leq   {c_0 } W_1(q_\e,g_{\varepsilon})  + C \left( \dfrac{ M_{\e,4}^c + \varepsilon^4 + |M_{\e,1}-\bar{Z}_{\e}|^{4}}{{c_0^2 }}\right)^{\f14}.
\]
and 
\[
\left|\int \phi (\tilde{T}_{\varepsilon}[q_\e] - \tilde{T}_{\varepsilon}[\bar{g}_{\varepsilon}]) \right| 
\leq \|\phi'\|_{L^{\infty}} \left(   c_0 W_1(q_\e,g_{\varepsilon}) 
+ C \left( \dfrac{ \varepsilon + |M_{\e,1} - \bar{Z}_{\e}| + (M_{\e,4}^{c})^{\f14} }{c_0^{\f12}}\right)  \right)
\]
To control the second term in $T_r,$ we can simply use
$$
\tilde{T}_{\varepsilon}[\bar{g}_{\varepsilon}]=\bar{g}_{\varepsilon},\qquad \tilde{T}_{\varepsilon}[{g}_{\varepsilon}]={g}_{\varepsilon}.
$$
We then obtain with a similar decomposition of $W_{1}(\bar{g}_{\e},g_{\e})$ as previously:
\[
\left|\int_{\mathbb R} \phi (\tilde{T}_{\varepsilon}[\bar{g}_{\varepsilon}] -\tilde{T}_{\varepsilon}[{g}_{\varepsilon}]  )   \right|=  
\left|\int_{\mathbb R} \phi (\bar{g}_{\varepsilon} -{g}_{\varepsilon}  )   \right|
\leq \|\phi'\|_{L^{\infty}} W_1(\bar{g}_{\varepsilon} ,{g}_{\varepsilon}) \leq C(\e+|M_{\e,1}-\bar Z_{\e}|)
\]
and
\[
|T_r| \leq \|\phi'\|_{L^{\infty}} \left[  c_0 W_1(q_\e,g_{\varepsilon}) +C \left( \dfrac{ \varepsilon + |M_{\e,1} - \bar{Z}_{\e}| + (M_{\e,4}^{c})^{\f14} }{c_0^{\f12}}    \right) \right].
\]
Introducing the result of {Theorem \ref{thm_ode}} we conclude that:
\begin{equation} \label{est_Tr}
|T_r| \leq \|\phi'\| \left[c_0 W_1(q_\e,g_{\varepsilon})+ \dfrac{K}{c_0^{{1/2}}} \varepsilon^{\MH 1-\delta} \right]
\end{equation}
where $K$ is a constant that depend only on $L$, $K_0$, $K_1$ and $K_2$.

\medskip

We turn to estimating $T_s.$ We use below without mention the controls induced by {Theorem \ref{thm_ode}}.  Concerning $T_s^{(c)},$ we apply (H3) to bound $m^{''}$ and we
have:
\[
\begin{aligned}
|T_s^{(c)}|\leq&   CA_m \int_{\mathbb R} (  1+  |\bar{Z}_{\e}|^{p} + |x-\bar{Z}_{\e}|^{p})  |x-\bar{Z}_{\e}|^2 (|\phi(\bar{Z}_{\e})| + |\phi(x)-\phi(\bar{Z}_{\e})|) g_{\varepsilon}({\MH \cdot,}x) dx\\
\leq&   CA_m\|\phi'\|_{L^{\infty}} \left( (1 + |\bar{Z}_{\e}|^p) \int_{\mathbb R} |x-\bar{Z}_{\e}|^3 g_{\varepsilon}(x) dx+{ (1 + |\bar{Z}_{\e}| ^{p}) |\bar{Z}_{\e}|} \int_{\mathbb R} |x-\bar{Z}_{\e}|^2 g_{\varepsilon}({\MH \cdot,}x)dx\right.
\\&
\left. + |\bar{Z}_{\e}| \int_{\mathbb R} |x-\bar{Z}_{\e}|^{p+2} g_{\varepsilon}({\MH \cdot,}x)dx + \int_{\mathbb R} |x-\bar{Z}_{\e}|^{p+3} g_{\varepsilon}({\MH \cdot,}x) dx\right) \\
\leq & CA_m  \|\phi'\|_{L^{\infty}} (1+|\bar{Z}_{\e}|^{p+1})  \varepsilon^2  .
\end{aligned}
\] 
Eventually, we conclude that:
\begin{equation} \label{eq_Tsc}
|T_s^{(c)}|\leq  K \|\phi'\|_{L^{\infty}}  \varepsilon^2.
\end{equation}
Concerning $T_s^{(b)},$ we have 
\[
\left| T_s^{(b)} -   m(\bar{Z}_{\e}) \int_{\mathbb R} (q_\e-g_{\varepsilon}) \phi   \right|  \leq   \left| \int_{\mathbb R} (m- m(\bar{Z}_{\e}))q_\e  \right| \left| \int_{\mathbb R} q_\e\phi \right|  
\]
where, applying Taylor expansion of $m$ in $\bar{Z}_{\e}:$
\[
\begin{aligned}
\left| \int_{\mathbb R} (m-m(\bar{Z}_{\e})) q_\e  \right|&  \leq \int |r^{m}[\bar{Z}_{\e}](x)| (x-\bar{Z}_{\e})^2 q_\e ({\MH \cdot,}x){\rm d}x+ |m'(\bar{Z}_{\e})| |M_{\e,1}-\bar Z_{\e}|
\\
&
\leq C A_m \left( \int_{\mathbb R} \left({ (1 +  |\bar{Z}_{\e}|^{p} )}|x-\bar{Z}_{\e}|^2 + |x-\bar{Z}_{\e}|^{p+2} \right) q_\e({\MH \cdot,}x) {\rm d}x  \right) {+K|M_{\e,1}-\bar Z_{\e}|}\\
& \leq  CA_m \left({ (1 +  |\bar{Z}_{\e}|^{p}) (M_{\e,2}^{c} + |M_{\e,1} - \bar{Z}_{\e}|^2)} + M_{\e,2+p}^{|c|} + |M_{\e,1} - \bar{Z}_{\e}|^{2+p} \right)\\
& \qquad {+K |M_{\e,1}-\bar Z_{\e}|},
\end{aligned}
\]
and, with a standard Jensen inequality:
\[
\left| \int_{\mathbb R} q_\e \phi  \right| \leq \|\phi'\|_{L^{\infty}}\left(  \int_{\mathbb R} q_\e({\MH \cdot,}x) |x|^2 dx\right)^{\frac 12} \leq  \|\phi'\|_{L^{\infty}} \left|M_{\e,2}^c + M_{\e,1}^{2} \right|^{\frac 12} \leq K \|\phi'\|_{L^{\infty}}.
\]
This entails :
\begin{equation} \label{eq_Tsb}
\left| T_s^{(b)} -   m(\bar{Z}_{\e}) I_{\phi} \right|  \leq K\e^{\MH 1-\delta} \|\phi'\|_{L^{\infty}}.
\end{equation}
Finally we compute the third term, that is 
\[
T_s^{(a)}   = \int_{\mathbb R} m (q_\e-g_{\varepsilon}) \phi  = \int_{\mathbb R} m q_\e \phi - \int_{\mathbb R} m g_{\varepsilon} \phi  .
\]
We use again Taylor expansion 
\begin{multline*}
\left| \int_{\mathbb R} m q_\e \phi  - m(M_{\e,1}) \int_{\mathbb R} q_\e \phi  \right| \\ 
\begin{aligned}
& = \left| \int_{\mathbb R}  r^{m}[{M}_{\e,1}](x)(x-M_{\e,1})^2 q_\e({\MH\cdot,}x) \phi(x) {\rm d}x
 + m'(M_{\e,1}) \int_{\mathbb R} (x-M_{\e,1})q_\e({\MH \cdot,}x) \phi(x) {\rm d}x \right| \\
  & \leq   K \varepsilon  \|\phi'\|_{L^{\infty}}.
\end{aligned}
\end{multline*}
Proceeding similarly with  $g_{\varepsilon}$ centered in ${\MH \bar{Z}_{\e}}$ we end up with 
\begin{equation} \label{eq_Tsa}
\left| T_s^{(a)} -    (m(M_{\e,1}) - m(\bar{Z}_{\e})) \int q_\e \phi - m(\bar{Z_{\e}}) I_{\phi} \right|   \leq  K \varepsilon \|\phi'\|_{L^{\infty}} .
\end{equation}
Combining \eqref{eq_Tsc}-\eqref{eq_Tsb}-\eqref{eq_Tsa},  we obtain  
\[
|T_s| \leq K \varepsilon^{\MH 1-\delta} \|\phi'\|_{L^{\infty}}.
\]
Eventually, we conclude that 
\[
\varepsilon^{2} \dot{I}_{\phi} {+}  r I_{\phi} = F
\]
where 
\[
|F| \leq \|\phi'\|_{\MH L^{\infty}} \left[rc_0 W_1(q_\e,g_{\varepsilon})+ \dfrac{K}{{c_0^{1/2}}} \varepsilon^{\MH 1-\delta} \right]. 
\]
Applying the property \fer{ODE-in},  we conclude that 
\[
\begin{aligned}
\left| {I}_{\phi}(t) \right| & \leq  I_{\phi}(0) +  \dfrac{1}{r}  \sup_{(0,t)} |F(t)| \\
& \leq \|\phi'\|_{L^{\infty}} \left( W_1(q_{\e,0},g_{\varepsilon,0}) + {c_0} \sup_{(0,t)} W_1(q_\e(t,\cdot),g_{\varepsilon }(t,\cdot)) + \dfrac{K}{{{r}c_0^{1/2}}} \varepsilon^{\MH 1-\delta} \right).
\end{aligned}
\]
Applying a sup argument on $\phi \in C^{1}_b(\mathbb R)$ such that  $\|\phi'\|_{L^{\infty}} \leq 1$ and then on $t \in [0,T]$ (for fixed $T < \infty$) we obtain that 
\[
\sup_{[0,T]} W_1(q_\e(t,\cdot),g_{\varepsilon }(t,\cdot))  \leq  W_1(q_{\e,0},g_{\varepsilon,0})+ c_0 \sup_{(0,T)} W_1(q_\e(t,\cdot),g_{\varepsilon }(t,\cdot)) + \dfrac{K}{{{r}c_0^{1/2}}} \varepsilon^{\MH 1-\delta}
\]
and thus, since $c_0$ can be chosen arbitrary small, we obtain
\[
\sup_{[0,T]} W_1(q_\e(t,\cdot),g_{\varepsilon }(t,\cdot))  \leq  K \left( W_1(q_{\e,0},g_{\varepsilon,0}) + \varepsilon^{\MH 1-\delta} \right).
\]
This ends the proof {of (i)}.

%%%%%%%%%%%%%%%%%%%%
\section{Approximation of the population size $\rho_\e$; the proof of Theorem \ref{thm:main}-(ii)}
\label{sec:aprho}
%%%%%%%%%%%%%%%%%
{In this section we provide the proof of Theorem \ref{thm:main}-(ii), that is the approximation of the total population size $\rho_\e$ and the convergence of $n_\e$ as $\e\to 0$.}

\medskip

{(a) {\bf The approximation of $\rho_\e$.}} We integrate the first line of  \fer{eq_main} with respect to $x$ to obtain
\beq
\label{eq-rho}
\e^2 \f{d}{dt} \rho_\e=\rho_\e \left( r-\int_\R m(y)q_\e(t,y)dy-\kappa \rho_\e \right).
\eeq
We use this equation to prove the second statement of Theorem \ref{thm:main} in two steps. 

{\bf Step 1.} We first prove that 
\beq
\label{mq-mz}
\int_\R m(y)q_\e(t,y)dy=m \big(\overline Z(t)\big)+O(\e^{\MH 1-\delta}).
\eeq
We  use the Taylor expansion \fer{Taylor} for $f=m$ at $z=M_{\e,1}$   to find
$$
\begin{aligned}
\int_\R m(y)q_\e(t,y)dy=& m(M_{\e,1}{\MH (t)})\int_R q_\e(t,y)dy+m'(M_{\e,1}{\MH (t)})\int_\R (y-M_{\e,1}{\MH (t)}) q_\e(t,y)dy\\
&+\int_\R (y-M_{\e,1}{\MH (t)})^2r^m[M_{\e,1}{\MH (t)}](y)q_\e(t,y)dy\\
=& m(M_{\e,1}{\MH (t)})+\int_\R (y-M_{\e,1}{\MH (t)})^2r^m[M_{\e,1}{\MH(t)}](y)q_\e(t,y)dy.
\end{aligned}
$$
Combining this equality with  \fer{eq_expansionm} we obtain
$$
\big|\int_\R m(y)q_\e({\MH \cdot,}y)dy-m(M_{\e,1})\big|\leq C\int_\R(y-M_{\e,1})^2 \big(1+|M_{\e,1}|^p+|y-M_{\e,1}|^p \big) q_\e(t,y)dy.
$$
Note from \fer{eq_initialdata}, \fer{eq:Ze} and \fer{eq_M1close} that, for $\e$ small enough,
$$
|M_{\e,1}(t)|\leq L, \qquad \text{for all $t\in \R^+$}.
$$
We deduce that
$$
\begin{aligned}
\big|\int_\R m(y)q_\e(t,y)dy-m(M_{\e,1}(t))\big| \leq & \, C(1+L^p)\int_\R(y-M_{\e,1}(t))^2   q_\e(t,y)dy\\
&+\int_\R(y-M_{\e,1}(t))^{2+p}   q_\e(t,y)dy.
\end{aligned}
$$
We next use \fer{eq_M2close} and \fer{eq_M4close} to obtain that
$$
\big|\int_\R m(y)q_\e(t,y)dy-m(M_{\e,1}(t))\big| \leq C\e^2 ,
$$
for $\e$ small enough and up to changing the constant $C$.
In order to prove \fer{mq-mz} we next compare $M_{\e,1}(t)$ with $\overline Z(t)$. To this end we first recall from   \fer{eq_M1close} that 
$$
|M_{\e,1}(t)-\overline Z_\e(t)|\leq K_0\e^{\MH 1-\delta}.
$$
Next we subtract \fer{eq:Z} from \fer{eq:Ze} to find that
$$
\f{d}{dt} \big(\overline Z_\e(t)- \overline Z(t)\big)=-m'(\overline Z_\e(t))+m'(\overline Z (t))=-m''(\theta(t))  (\overline Z_\e(t)- \overline Z(t)),
$$
with
$$
\theta(t)\in [\min  (\overline Z_\e(t), \overline Z(t)), \max (\overline Z_\e(t), \overline Z(t))]\subset [-L,L].
$$
From this equality together with Assumption (H1) we deduce that
$$
|\overline Z_\e(t)- \overline Z(t)| \leq e^{-A_0 t}|\overline Z_\e(0)- \overline Z(0)|\leq C\e e^{-A_0 t}.
$$
Combining the inequalities above we obtain that 
$$
|m(M_{\e,1}(t))-m(\overline Z (t))|=O(\e^{\MH 1-\delta}), 
$$
and hence 
$$
\sup_{[0,T]}\big|\int_\R m(y)q_\e(t,y)dy-m(\overline Z (t))\big| =O(\e^{\MH 1-\delta}).
$$

{\bf Step 2.} We next prove \fer{as-rho}. To this end, we first notice, thanks to \fer{eq-rho} and \fer{mq-mz}, that we have
$$
\e^2 \f{d}{dt} \rho_\e=\rho_\e \big( r-m(\overline Z)-\kappa \rho_\e +O(\e^{\MH 1-\delta}) \big).
$$
We then define
$$
J_\e:=\f {1} {\rho_\e}.
$$
Replacing this in the equation on $\rho_\e$ we obtain the following linear equation
$$
\e^2 \f{d}{dt} J_\e= \kappa - J_\e \big( r-m(\overline Z)  +O(\e^{\MH 1-\delta}) \big).
$$
We solve this equation to obtain 
\beq
\label{eq:Je}
J_\e(t) = J_\e(0)e^{-\f{1}{\e^2}\int_0^t \big( r-m(\overline Z(s))  +O(\e^{\MH 1-\delta}) \big) ds}+\f{\kappa}{\e^2}\int_0^t e^{-\f{1}{\e^2}\int_\tau^t \big( r-m(\overline Z(s))  +O(\e^{\MH 1-\delta}) \big) ds}d\tau.
\eeq
We next use \fer{as:M10},  \fer{eq:Ze} and {(H2)} to obtain that there exists a positive constant $a_0$ such that, for $\e$ small enough,
$$
r-m(\overline Z(s))  +O(\e^{\MH 1-\delta})\geq a_0, \qquad \text{for all $s\in \R^+$}.
$$
We deduce that
$$
J_\e(0)e^{-\f{1}{\e^2}\int_0^t \big( r-m(\overline Z(s) )+O(\e^{\MH 1-\delta}) \big)ds}\leq \f{1}{\rho_\e(0)}e^{-\f{a_0t}{\e^2}}.
$$
To control the second term in the r.h.s. of \fer{eq:Je} we let $t\geq \e^{\beta}$, with $\beta\in {(1,2)}$, and we write
$$
\begin{aligned}
\f{\kappa}{\e^2}\int_0^t e^{-\f{1}{\e^2}\int_\tau^t \big( r-m(\overline Z(s))  +O(\e^{\MH 1-\delta}) \big) ds}d\tau
=&
\f{\kappa}{\e^2}\int_0^{t-\e^\beta} e^{-\f{1}{\e^2}\int_\tau^t \big( r-m(\overline Z(s))  +O(\e^{\MH 1-\delta}) \big) ds}d\tau\\
&+\f{\kappa}{\e^2}\int_{t-\e^\beta}^t e^{-\f{1}{\e^2}\int_\tau^t \big( r-m(\overline Z(s))  +O(\e^{\MH 1-\delta}) \big) ds}d\tau\\
=&J_{\e,1}+J_{\e,2}.
\end{aligned}
$$
We control each of these terms separately. To control the first term we write
\begin{align*}
J_{\e,1}& =\f{\kappa}{\e^2}\int_0^{t-\e^\beta} e^{-\f{1}{\e^2}\int_\tau^t \big( r-m(\overline Z(s))  +O(\e^{\MH 1-\delta}) \big) ds}d\tau\leq
\f{\kappa}{\e^2}\int_0^{t-\e^\beta} e^{-\f{1}{\e^2}a_0(t-\tau)}d\tau\\
& \leq \f{\kappa}{a_0} \big( e^{-\f{a_0}{\e^{2-\beta}}}-e^{-\f{a_0t}{\e^2}} \big) \leq \f{\kappa}{a_0}  e^{-\f{a_0}{\e^{2-\beta}}}.
\end{align*}
To control $J_{\e,2}$ we use the fact that
$$
{
|m(\overline Z(s))-m(\overline Z(t))|\leq C\e^{\beta},\qquad \text{for all $s\in [t-\e^{\beta},t]$}.
}
$$
 Up to changing the constant $C$, we deduce that, {since $\beta > 1:$}
$$
\begin{aligned}
J_{\e,2}(t)=&\f{\kappa}{\e^2}\int_{t-\e^\beta}^t e^{-\f{1}{\e^2}\int_\tau^t \big( r-m(\overline Z(s))  +O(\e^{\MH 1-\delta}) \big) ds}d\tau\\
\leq&
\f{\kappa}{\e^2}\int_{t-\e^\beta}^t e^{-\f{1}{\e^2}  \big( r-m(\overline Z(t))  -C\e^{\MH 1-\delta} \big) (t-\tau)}  d\tau \\
\leq& \f{\kappa}{r-m(\overline Z(t))-C\e^{\MH 1-\delta}} .
\end{aligned}
$$
We  similarly find
$$
\begin{aligned}
J_{\e,2}(t)=&\f{\kappa}{\e^2}\int_{t-\e^\beta}^t e^{-\f{1}{\e^2}\int_\tau^t \big( r-m(\overline Z(s))  +O(\e^{\MH 1-\delta}) \big) ds}d\tau\\
\geq&
\f{\kappa}{\e^2}\int_{t-\e^\beta}^t e^{-\f{1}{\e^2} \big( r-m(\overline Z(t))  +C\e^{\MH 1-\delta} \big)  (t-\tau)}  d\tau \\
=& \f{\kappa}{r-m(\overline Z(t))+C\e^{\MH 1-\delta}}\big( 1-e^{-\f{ r-m(\overline Z(t))  +C\e^{\MH 1-\delta}  }{\e^{2-\beta}}}\big).
\end{aligned}
$$
Using again \fer{as:M10},  \fer{eq:Ze} and  {(H2)} , when $\e$ is chosen small enough, we have
$$
 r-m(\overline Z(t))  +C\e^{\MH 1-\delta}\geq a_0/2, 
 $$
 which implies that
 $$
 J_{\e,2}(t)\geq \f{\kappa}{r-m(\overline Z(t))+C\e^{\MH 1-\delta}}\big( 1-e^{-\f{a_0}{2\e^{2-\beta}}} \big),
 $$
and hence
$$
|J_{\e,2}(t))-\f{\kappa}{r-m(\overline Z(t))}|=O(\e^{\MH 1-\delta}).
$$
Combining the inequalities above we obtain that
$$
|J_\e(t)-\f{\kappa}{r-m(\overline Z(t))}|\leq O(\e^{\MH 1-\delta})+\big( \f{1}{\rho_\e(0)} + \f{\kappa}{a_0} \big) e^{-\f{a_0}{\e^{2-\beta}}} ,\qquad \text{for all $t\geq \e^\beta$} .
 $$
 We deduce that, {for all $t\geq \e^\beta$},
 $$
 |\rho_\e(t)-\f {r-m(\overline Z(t))}{\kappa}|\leq {\rho_\e(t)\left(\f {r-m(\overline Z(t))}{\kappa}\right)} \Big( O(\e^{\MH 1-\delta})+ \big( \f{1}{\rho_\e(0)} + \f{\kappa}{a_0} \big) e^{-\f{a_0}{\e^{2-\beta}}}\Big).
 $$
 We next apply the maximum principle to \fer{eq-rho} and use Assumption \fer{as:rho0} to obtain that
 $$
 \rho_\e(t)\leq \max(\rho_M,r), \qquad \text{for all $t\geq 0$}.
 $$
 We also recall from Assumption \fer{as:rho0} that $\rho_\e(0)\geq \rho_m>0$. We conclude that
$$
 \big| \rho_\e(t)-\f {r-m(\overline Z(t))}{\kappa} \big|=O(\e^{\MH 1-\delta}) , \qquad {\text{for all $t\geq \e^\beta$}.}
 $$
 
 \medskip
 
{ (b) {\bf The convergence of $n_\e$ as $\e\to 0$.} From the definition of $q_\e$ and thanks to the convergence of $\rho_\e$ which was established in the paragraph above, it is enough to prove that $q_\e$ converges in $C((0,\infty);\mathcal M^1(\R))$ to $\partial(x-\overline Z(t))$.  We next notice from the definition of $g_\e$ that it converges in $C((0,\infty);\mathcal M^1(\R))$ to $\partial(x-\overline Z(t))$. The idea is then to use the estimate \fer{est-main-Was} to prove that $q_\e$ and $g_\e$ have the same limit.
}

{Let $\phi:\R\to \R$  be a  Lipschitz continuous  function. We compute, for all $t\in \R^+$ and using \fer{est-main-Was},
\beq
\label{est-phiqg}
\big|\int_\R \phi(x)(q_\e(t,x)-g_\e(t,x))dx \big|\leq \|\phi'\|_{L^\infty}W_1(q_\e,g_\e)\leq  K \|\phi'\|_{L^\infty} \left( W_1(q_{\e,0},g_{\varepsilon,0}) + \varepsilon^{1-\delta} \right).
\eeq
We next estimate $W_1(q_{\e,0},g_{\varepsilon,0})$. To this end, we will use the following triangular inequality
$$
W_1(q_{\e,0}, g_{\varepsilon,0} ) \leq W_1(q_{\e,0}, \partial(\cdot-x_0))+W_1(g_{\e,0}, \partial(\cdot-x_0)).
$$
 Again from the definition of $g_\e$, it is immediate that the second term in the r.h.s. of the inequality above tends to $0$, as $\e\to 0$. We prove the same property for the first term. We first compute, using the Cauchy-Schwartz inequality,
 $$
  W_1(q_{\e,0}, \partial(\cdot-M_{\e,1}(0) ) ) \leq \int |y-M_{\e,1}(0)| q_{\e,0}(y)dy \leq \Big(\int |y-M_{\e,1}(0)|^2 q_{\e,0}(y)dy \Big)^{1/2}=\sqrt{M_{\e,2}(0)}.
 $$
 We then use Assumption \fer{eq_initialdata} to obtain that 
 $$
  W_1(q_{\e,0}, \partial(\cdot-M_{\e,1}(0) ) ) \to 0, \qquad \text{as $\e\to 0$}.
  $$
 Combining this property with Assumption \fer{as:rho0} we deduce that 
  $$
  W_1(q_{\e,0}, \partial(\cdot-x_0) ) \to 0, \qquad \text{as $\e\to 0$}.
  $$
  and hence 
  $$
  W_1(q_{\e,0}, g_{\varepsilon,0} ) \to 0, \qquad \text{as $\e\to 0$}.
  $$
 This concludes the proof thanks to \fer{est-phiqg}.
}

 %%%%%%%%%%
 \section{A comparison with a model with an asexual reproduction term}
 \label{sec:comp-asex}
 %%%%%%%%%%%%%
 
 A closely related model, considering an asexual reproduction and including mutations, is written
 \begin{equation} \label{eq_asex}
\begin{cases}
\varepsilon  \partial_t   n_\e = p M_{\varepsilon}  [ n_\e]  
+  \left({ r-m(x) }- \kappa \rho_\e(t) \right) n_\e ,\\
\rho_\e(t)= \int_{\mathbb R}   n_\e(t,y)dy,\\
n_\e(0,x)=n_{\e,0}(x).
\end{cases}
\end{equation}
where
\[
M_{\varepsilon}[n_\e](x) =\int G_\e(x-y) \big( n_\e(t,y)-n_\e(t,x) \big)  dy ,
\qquad
G_\e(x )= \f{1}{\e } G(\f{x}{\e}).
\]
A typical example of $G_\e$ is a normal distribution with variance of order $\e^2$, for instance $\Gamma_\e$.
Here, since each offspring has a single parent, contrary to model \fer{eq_main}  the reproduction is modeled via linear terms.
{The parameter $r$ corresponds to the clonal birth rate of individuals of trait $x$ and   $m(x)$ represents a trait-dependent death rate.} The term $M_\e$ models the mutations which arise with rate $p$. The function $G_\e$ corresponds to the mutation law.  The small parameter $\e$ is introduced to rescale the mutation size, such that we consider small mutational effects. It is shown in previous works \cite{GB.BP:08,GB.SM.BP:09,AL.SM.BP:10} and under suitable assumptions that, as $\e\to0$, the phenotypic density $n_\e$ has a similar behavior to the one observed in Theorem \ref{thm:main} in the case of model \fer{eq_main}.
It indeed   converges to a measure $n$, with
$$
n(t)=\rho(t)\partial(x-\overline x(t)).
$$
Under suitable assumptions \cite{AL.SM.BP:10,SM.JR:16}, the dynamics of $\overline x(t)$ can also be described via an ordinary differential equation
$$
\f{d}{dt}\overline x(t)= {-A(t)  m'(\overline x(t))}.
$$
 This equation can be compared to \fer{eq:Z}, with the difference that here the coefficient  in front of the gradient of the growth rate,   $A(t)$, is time dependent. This dependency with respect to time comes from the fact that the dominant term of the variance of the phenotypic distribution corresponding to \fer{eq_asex} varies with time, unlike to the model \fer{eq_main} (see \fer{eq_M2close}). The dynamics of this coefficient $A(t)$ is rather complex and can be obtained via a Hamilton-Jacobi equation with constraint. The usual method to study \fer{eq_asex} is indeed via an approach based on Hamilton-Jacobi equations, the first step being to perform a Hopf-Cole transformation 
 $$
 n_\e(t,x)=\exp \big( \f{u_\e(t,x)}{\e} \big).
 $$
 It can be shown \cite{GB.BP:08,GB.SM.BP:09} that $u_\e$ converges, as $\e\to 0$, to a continuous function $u$, which solves the following Hamilton-Jacobi equation in the viscosity sense
\beq
\label{HJ}
 \begin{cases}
{ \p_t u (t,x)= \displaystyle {p\int_\R G(h)e^{\f{\p}{\p x} u(t,x)h}dh + r-m(x)}-\kappa \rho(t),}& (t,x)\in \R^+\times \R,\\
 \max_{x\in \R} u(t,x)=0,& x\in \R,\\
 u(0,x)=u_0(x),
 \end{cases}
\eeq
 where $\rho$ is the limit of $\rho_\e$ as $\e\to 0$.
 The function $u$ relates to the phenotypic density $n$ via the following relation
 $$
 \mathrm{supp}\ n(t,\cdot) \subset \{ x\, |\, u(t,x)=0 \}.
 $$
 Under some global concavity assumptions \cite{AL.SM.BP:10,SM.JR:16} one can prove that the set above has a single point $\overline x(t)$ and hence 
 $$
 n(t,x)=\rho(t) \partial(x-\overline x(t)).
 $$
 The coefficient $A(t)$ is then given by the following formula 
 $$
 A(t)=-\f{1}{\f{\p^2}{\p x^2}u(t,\overline x(t))}.
 $$
One can wonder whether the moment-based approach introduced in this article can allow to obtain similar results in the case of model \fer{eq_asex} providing in this way a direct proof, rather than using Hamilton-Jacobi equations. We will explain below why the complexity of the dynamics in the model with asexual reproduction does not allow to apply the direct method presented above.

 In order to keep the computations more straightforward and to highlight the main arguments we will focus on the following particular death rate
$$
{m(x)=sx^2.}
$$
We also assume that $G$ is symmetric, that is
$$
G(x)=G(-x),\qquad \text{for all $x\in \R$}.
$$
{We then define $q_\e$ similarly to Section \ref{sec:intro}:
$$
q_\e(t,x)=\f{n_\e(t,x)}{\rho_\e(t)}.
$$
Replacing this in \fer{eq_asex} we obtain
 \begin{equation} \label{eq_asex_qe}
\begin{cases}
\varepsilon  \partial_t   q_\e =  \displaystyle p M_{\varepsilon}  [ q_\e]  
+  \left({ -m(x) }+ \int_{\mathbb R} m(y)q_\e({\MH \cdot},y)dy\right) q_\e ,\\
q_\e(0,x)=\f{n_{\e,0}(x)}{\rho_\e(0)}.
\end{cases}
\end{equation}
We also define $M_{\e,i}$ and $M_{\e,i}^c$ similarly to Section \ref{sec:intro}.} We next derive the equations on the first and the second order moments $M_{\e,1}$ and $M_{\e,2}^c$, similarly to Section \ref{sec:moments}.

To derive an equation on $M_{\e,1}$, we first compute
$$
\begin{aligned}
\int_\R xM_\e[q_\e](x)dx=&\int_\R\int_\R x\big( q_\e({\MH\cdot,}x+\e h)-q_\e({\MH\cdot,}x)\big) G(h)dhdx\\
=& \int_\R\int_\R (x+\e h)  q_\e({\MH\cdot,}x+\e h)G(h)dhdx- \int_\R\int_\R x  q_\e({\MH\cdot,}x)G(h)dhdx\\
&-\e \int_\R \int_\R hq_\e({\MH\cdot,}x+\e h)G(h)dhdx\\
=&-\e \int_\R h G(h)dh=0.
\end{aligned}
$$
Note that here we have used the fact that $G$ is symmetric. We next multiply   {\fer{eq_asex_qe}} by $x$ and integrate with respect to $x$ to obtain
$$
\begin{aligned}
\e \dot{M}_{\e,1}=&{-s\int_\R  x^3 q_\e({\MH\cdot,}x)dx+s\int_\R  x^2 q_\e({\MH\cdot,}x)dx\, {M}_{\e,1}}\\
=&-s\int_\R  (x-M_{\e,1}+M_{\e,1})^3 q_\e({\MH\cdot,}x)dx+s\int_\R  (x-M_{\e,1}+M_{\e,1})^2 q_\e({\MH\cdot,}x)dx\, {M}_{\e,1}\\
=&-2s M_{\e,1} M_{\e,2}^c-sM_{\e,3}^c
\\
=&-m'(M_{\e,1}) M_{\e,2}^c-sM_{\e,3}^c.
\end{aligned}
$$
Therefore, $M_{\e,1}$ solves an equation with similar structure to \fer{eq_M1}, that is 
$$
\e \dot{M}_{\e,1}{+m'(M_{\e,1})} M_{\e,2}^c=F_1={-}sM_{\e,3}^c,
$$
where the r.h.s. is a priori of upper order term  in $\e$. The equation on $M_{\e,1}$ is not indeed the one that leads to  difficulty.

We next compute the equation solved by $M_{\e,2}^c$. To this end, we first compute
$$
\begin{aligned}
\int_\R (x-M_{\e,1})^2 M_\e[q_\e](x)dx =& \int_\R \int_\R (x-M_{\e,1})^2\big( q_\e({\MH\cdot,}x+\e h)-q_\e({\MH\cdot,}x)\big) G(h)dhdx\\
=&\int_\R \int_\R x^2\big( q_\e({\MH\cdot,}x+\e h)-q_\e({\MH\cdot,}x)\big) G(h)dhdx\\
=&\int_\R\int_\R (x+\e h-\e h)^2 q_\e({\MH\cdot,}x+\e h)G(h)dh {dx}- \int_\R\int_\R x^2 q_\e({\MH\cdot,}x)G(h)dh{dx}\\
=&-2 \e M_{\e,1}\int_{\mathbb R} hG(h)dh+\e^2 \int_\R h^2 G(h)dh\\
=&\e^2 \sigma,
\end{aligned}
$$
where $\sigma$ denotes the variance of the distribution $G$. We next multiply {\fer{eq_asex_qe}} by $(x-M_{\e,1})^2$ and integrate with respect to $x$ to obtain
$$
\begin{aligned}
\e \dot{M}_{\e,2}^c%=&p\sigma \e^2+\int_\R (x-M_{\e,1})^2(r-sx^2) q_\e({\MH\cdot,}x)dx-\int_\R (r-sx^2)q_\e({\MH\cdot,}x)dx\, {M}_{\e,2}^c\\
=&p\sigma \e^2-s\int_\R (x-M_{\e,1})^2 x^2  q_\e({\MH\cdot,}x)dx+s\int_\R  x^2 q_\e({\MH\cdot,}x)dx\, {M}_{\e,2}^c\\
=&p\sigma \e^2-s\int_\R (x-M_{\e,1})^2 (x-M_{\e,1}+M_{\e,1})^2  q_\e({\MH\cdot,}x)dx \\
& +s\int_\R  (x-M_{\e,1}+M_{\e,1})^2 q_\e({\MH\cdot,}x)dx\, {M}_{\e,2}^c.
\end{aligned}
$$
We deduce that 
\beq
\label{eq:M2-asex}
\e \dot{M}_{\e,2}^c=p\sigma \e^2-sM_{\e,4}^c-2sM_{\e,3}^cM_{\e,1}+s(M_{\e,2}^c)^2.
\eeq
Here the structure of the equation is different from \fer{eq_M2}. The dissipative term that allowed us to close the equation is not present here ({\em i.e.} the equation is not written in the form $\e\dot{M}_{\e,2}^c+ CM_{\e,2}^c=F$ {with $C\in (0,\infty)$} ). Moreover, all the terms on the r.h.s. of \fer{eq:M2-asex} may potentially be of the same order and could equally contribute to the dynamics of $M_{\e,2}^c$ (see {\cite{SM.JR:15-1,SM:17,SM.JR:20}}). We hence do not expect to be able to close the equations of the moments at this stage. The dynamics of moments of order $3$ and $4$ are indeed driven by the solution to the Hamilton-Jacobi equation \fer{HJ} {\cite{SM:17,SM.JR:20}}.

\appendix
%%%%%%%%%%%%%%%
\section{The Cauchy problem; the proof of Proposition \ref{prop:Cauchy}}
\label{sec:appendix}
%%%%%%%%%%%%%%%
In this section, we focus on the Cauchy theory analyzed on { Proposition \ref{prop:Cauchy}}. This issue is tackled previously in \cite{GR:17}, when the {trait dependency is only} included in the reproduction kernel $T_{\varepsilon}$, or in \cite{Prevost} on bounded domains. For completeness, we explain here how the arguments may be adapted to our framework. We shall in particular detail the issues related to the boundedness of $m.$ This rate is classically assumed quadratic and thus unbounded while this makes the analysis surprisingly more difficult.  

In the whole section we fix  $\varepsilon=1$ since this has no influence on our computations and we drop all $\varepsilon$ in notations. Also, we focus on the case of a smooth mortality rate $m$ satisfying (H0) and (H3) (the other assumptions have no influence here).
 The outline of the appendix is as follows. Firstly, we provide a definition of solution to \eqref{eq_main} and analyze its properties. 
Then, we show that existence of solutions in case $m$ is bounded reduces to a classical application of Cauchy-Lipschitz theorem and tackle the existence of solutions in case $m$ is unbounded. We conclude by obtaining uniqueness of solutions with a sufficiently large number of bounded moments.

\subsection{What is a solution to \eqref{eq_main} ?}
From now on $k \in (1,\infty)$ is fixed. We denote
then $w_{k}(x) = (1+|x|)^k$ and 
\[
X_{k} := \left\{ n \in L^1(\mathbb R) \text{ s.t. } \int_{\mathbb R} w_k(x) |n(x)|{\rm d}x < \infty \right\}.
\]
This is a Banach space endowed with the norm:
\[
\|n\|_{X_k} := \int_{\mathbb R} w_k(x) |n(x)|{\rm d}x.
\]
Inside $X_k$ we denote:
\[
\mathcal U_k := \left\{ n \in X_k \text{ s.t. } \int_{\mathbb R} n(x){\rm d}x \neq 0 \right\}.
\]
Clearly $\mathcal U_k$ is an open subset of $X_k.$ 
We denote also below $C_b(\mathcal O)$ the set of bounded continuous functions on $\mathcal O$ endowed with its classical $L^{\infty}$-norm.
We recall the definition of the mapping $T$: 
\[T[n(t,\cdot)](x) =\int_{\mathbb R} \int_{\mathbb R} \Gamma \left(x-\frac{(y+y')}{2}\right)  n(t,y)\f{n(t,y')}{\rho(t)} dydy'.\]
For the following analysis, we  define solutions to 
\eqref{eq_main} as follows
\begin{definition} \label{def_solution}
Given $n_0 \in X_k \cap C_b(\mathbb R)$ and $T_0 >0,$   we say that $n$ is a solution to \eqref{eq_main}  on $(0,T_0)$ if
\begin{itemize}
\item $n \in C_b([0,T_0] \times \mathbb R) \cap L^{\infty}((0,T_0);X_k)$
with $n(t,\cdot) \in \mathcal U_k$ for all $t \in (0,T_0),$ 
\item for all $x \in \mathbb R$ the restriction $n(\cdot,x)$ solves \eqref{eq_main} in $\mathcal D'((0,T_0)).$
\end{itemize}
We say that $n$ is a global solution if its restriction on $[0,T_0]$ is a solution on $(0,T_0)$ for arbitrary $T_0>0.$
\end{definition}

We point out that we do not assume {\em a priori} that $n_0 \geq 0$ here.  However, we shall restrict in the second part
of the analysis to such initial data.  A straightforward interpolation argument entails also that  a solution $n$ satisfies $n \in C([0,T_0];X_{\tilde{k}})$
for arbitrary $\tilde{k} < k.$ We also point out that the introduction of distributions $\mathcal D'((0,T_0))$ is just for convenience here. Indeed, under the (satisfied) condition that $n \in C([0,T_0] ; L^1(\mathbb R))$ has non-zero integral, we observe that $t \mapsto T[n(t,\cdot)](x) \in C([0,T_0])$ so that equation  $n(\cdot,x) \in C^1([0,T_0])$ and \eqref{eq_main} holds in a classical sense.
 
 \medskip
 
 To analyze the Cauchy problem, 
we will rely on the following remark
\begin{lemma} \label{lem_Tlip}
The mapping $T :  \mathcal U_k \cap C_b(\mathbb R) \to X_k \cap C_b(\mathbb R)$ is positive and locally lipschitz. More precisely, 
\begin{itemize}
\item[i)] given $(n,\tilde{n}) \in (\mathcal U_k \cap C_b(\mathbb R))^2,$  there exists a constant  
$K$ depending increasingly on :
\[
\max \left(\f{\| n \|_{X_k}}{\left|\int_{\mathbb{R}} n(z){\rm d}z\right|},\f{\| \tilde{n} \|_{X_k}}{\left|\int_{\mathbb{R}}\tilde{n}(z){\rm d}z\right|}\right)
\] 
such that:
\[
\| T[n] - T[\tilde{n}] \|_{X_k \cap C_b(\mathbb R)} \leq K \| n- \tilde{n} \|_{X_k},
\]
\item[ii)] if $n \geq 0$ we have $T[n] \geq 0.$
\end{itemize}
\end{lemma}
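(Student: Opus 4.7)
The plan is to establish (ii) and (i) by direct manipulation of the defining formula
$T[n](x) = \int\int \Gamma(x-(y+y')/2)\, n(y)\,\tfrac{n(y')}{\rho}\,dy\,dy'$,
where $\rho = \int n$. Item (ii) is immediate: if $n \geq 0$ and $n \in \mathcal{U}_k$, then $\rho > 0$ and $\Gamma \geq 0$, so the integrand is nonnegative.

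Before attacking the Lipschitz bound I would record that $T[n]$ is actually continuous. Writing $T[n] = \Gamma \ast \mu$ with $\mu(z) = (2/\rho) \int n(z+w)n(z-w)\,dw$, which is a finite signed measure (since $|\mu|(\mathbb R) \leq 2\|n\|_{L^1}^2/|\rho|$), and noting that $\Gamma$ is a bounded uniformly continuous Gaussian, gives $T[n] \in C_b(\mathbb R)$. It then remains to bound the differences $T[n]-T[\tilde n]$ both in $L^\infty$ and in $X_k$.

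The algebraic engine of the Lipschitz estimate is the telescoping identity
\[
\frac{n(y)n(y')}{\rho} - \frac{\tilde n(y)\tilde n(y')}{\tilde \rho}
= \frac{(n(y)-\tilde n(y))\,n(y')}{\rho} + \frac{\tilde n(y)\,(n(y')-\tilde n(y'))}{\rho} + \frac{\tilde n(y)\tilde n(y')(\tilde\rho-\rho)}{\rho\tilde\rho},
\]
combined with $|\tilde\rho-\rho|\leq \|n-\tilde n\|_{L^1} \leq \|n-\tilde n\|_{X_k}$ (which uses $w_k \geq 1$). For the $L^\infty$ bound, one inserts this identity into the definition of $T[n]-T[\tilde n](x)$, uses $\Gamma \leq \|\Gamma\|_\infty$, and integrates in $y,y'$; each of the three resulting terms is controlled by $\|n-\tilde n\|_{X_k}$ multiplied by a constant that depends monotonically on $\|n\|_{L^1}/|\rho|$ and $\|\tilde n\|_{L^1}/|\tilde\rho|$, hence on the quantity appearing in the statement.

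For the $X_k$ piece I would multiply by $w_k(x)$ and integrate in $x$ first, exploiting the elementary inequality
$
w_k(x) \leq C_k\bigl(w_k(x - \tfrac{y+y'}{2}) + w_k(y) + w_k(y')\bigr),
$
together with the Gaussian moment bound $A_k := \int w_k(\xi)\,\Gamma(\xi)\,d\xi < \infty$. This reduces the spatial integral to a product of at most two weighted $L^1$ factors on $n$, $\tilde n$, or $n-\tilde n$, and each of the three telescoping terms ends up bounded by a constant (involving $C_k,A_k,\|\Gamma\|_\infty$) times $\|n-\tilde n\|_{X_k}$ times a product of ratios of the form $\|n\|_{X_k}/|\rho|$ or $\|\tilde n\|_{X_k}/|\tilde\rho|$. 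Collecting everything yields the claim. The only genuine obstacle is book-keeping: one must push all three summands of the telescoping identity through both the $L^\infty$ and $X_k$ estimates and verify at the end that the final constant is indeed an increasing function of $\max\bigl(\|n\|_{X_k}/|\int n|,\|\tilde n\|_{X_k}/|\int \tilde n|\bigr)$; there is no analytic difficulty beyond Gaussian moment integrals and the triangle-type weight inequality above.
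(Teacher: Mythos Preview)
Your overall approach---telescoping the bilinear expression, then using the weight inequality $w_k(x)\leq C_k\bigl(w_k(x-\tfrac{y+y'}{2})+w_k(y)+w_k(y')\bigr)$ together with the finiteness of Gaussian moments---is exactly the paper's strategy, and item (ii) is handled identically. There is, however, a small defect in your specific telescoping identity relative to the stated conclusion. Your second term $\dfrac{\tilde n(y)(n(y')-\tilde n(y'))}{\rho}$ produces, after integration, a factor $\|\tilde n\|_{L^1}/|\rho|$ (for the $C_b$ bound) or $\|\tilde n\|_{X_k}/|\rho|$ (for the $X_k$ bound), and this mixed ratio is \emph{not} controlled by $\max\bigl(\|n\|_{X_k}/|\rho|,\ \|\tilde n\|_{X_k}/|\tilde\rho|\bigr)$ alone: take $|\rho|$ small while $|\tilde\rho|$ and $\|\tilde n\|_{X_k}$ remain of order one. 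So your assertion that the resulting constant ``depends monotonically on $\|n\|_{L^1}/|\rho|$ and $\|\tilde n\|_{L^1}/|\tilde\rho|$'' fails precisely for this term (and similarly for your third term, which carries $\|\tilde n\|_{X_k}^2/(|\rho|\,|\tilde\rho|)$).

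The fix is a one-line change: use a telescoping in which every occurrence of $n$ is paired with $\rho$ and every $\tilde n$ with $\tilde\rho$, namely
\[
\frac{n(y)n(y')}{\rho}-\frac{\tilde n(y)\tilde n(y')}{\tilde\rho}
=\frac{n(y)}{\rho}\bigl(n(y')-\tilde n(y')\bigr)
+\frac{\tilde n(y')}{\tilde\rho}\bigl(n(y)-\tilde n(y)\bigr)
+\frac{n(y)}{\rho}\,\frac{\tilde n(y')}{\tilde\rho}\,(\tilde\rho-\rho),
\]
which is the decomposition the paper uses. With this version every resulting factor is bounded by the stated maximum or its square, and the rest of your argument goes through unchanged.
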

\begin{proof}
Property ii) follows from the fact that all the quantities in the integrals are nonnegative.  For property i), we first get:
\[\begin{aligned}
\| T[n] - T[\tilde{n}] \|_{X_k} & \leq \int_{\mathbb R}\int_{\mathbb R}\int_{\mathbb R} \Gamma \left(x-\frac{(y+y')}{2}\right)w_k(x)\left| n(y)\f{n(y')}{\int_{\mathbb R} n(z){\rm d}z} - \tilde{n}(y)\f{\tilde{n}(y')}{\int_{\mathbb R} \tilde{n}(z){\rm d}z} \right| {\rm d}y {\rm d}y'{\rm d}x.
\end{aligned} \] 
{We next notice that} 
\[w_k(x) \leq C_k \left(w_k \left(x-\frac{(y+y')}{2}\right) +w_k \left(y\right)+w_k(y')\right), \]
with $C_k$ a constant depending on $k$.
Moreover, {we have}
\[\begin{aligned}
n(y)\f{n(y')}{\int_{\mathbb R} n(z){\rm d}z} - \tilde{n}(y)\f{\tilde{n}(y')}{\int_{\mathbb R} \tilde{n}(z){\rm d}z} = &\f{n(y)}{\int_{\mathbb R} n(z){\rm d}z}(n(y')-\tilde{n}(y'))+\f{\tilde{n}(y')}{\int_{\mathbb R} \tilde{n}(z){\rm d}z}(n(y)-\tilde{n}(y))\\ 
&+\f{n(y)}{\int_{\mathbb R} n(z){\rm d}z}\f{\tilde{n}(y')}{\int_{\mathbb R} \tilde{n}(z){\rm d}z}\int_{\mathbb{R}}(n(z)-\tilde{n}(z) ){\rm d}z  
\end{aligned}\]
{We deduce},  since $w_k \Gamma \in L^1(\mathbb R),$ that there is a constant $C_{k,\Gamma}$ depending further on $\Gamma$ for which: 
\[\begin{aligned}
\| T[n] - T[\tilde{n}] \|_{X_k}  \leq & C_{k,\Gamma} \int_{\mathbb R}\int_{\mathbb R} (w_k \left(y\right)+w_k(y'))\left| \f{n(y)}{\int_{\mathbb R} n(z){\rm d}z}(n(y')-\tilde{n}(y')) \right|  {\rm d}y {\rm d}y' \\
&+ C_{k,\Gamma} \int_{\mathbb R}\int_{\mathbb R} (w_k \left(y\right)+w_k(y'))\left| \f{\tilde{n}(y')}{\int_{\mathbb R} \tilde{n}(z){\rm d}z}(n(y)-\tilde{n}(y)) \right|  {\rm d}y {\rm d}y' \\
&+ C_{k,\Gamma}\int_{\mathbb R}\int_{\mathbb R} (w_k \left(y\right)+w_k(y'))\left| \f{n(y)}{\int_{\mathbb R} n(z){\rm d}z}\f{\tilde{n}(y')}{\int_{\mathbb R} \tilde{n}(z){\rm d}z}\int_{\mathbb{R}}(n(z)-\tilde{n}(z) ){\rm d}z   \right|  {\rm d}y {\rm d}y'\\
\leq & C_{k,\Gamma}\left(\f{\| n \|_{X_k}}{\left|\int_{\mathbb{R}} n(z){\rm d}z\right|} + \f{\| \tilde{n} \|_{X_k}}{\left|\int_{\mathbb{R}}\tilde{n}(z){\rm d}z\right|}+\f{\| n \|_{X_k}}{\left|\int_{\mathbb{R}} n(z){\rm d}z\right|}\f{\| \tilde{n} \|_{X_k}}{\left|\int_{\mathbb{R}}\tilde{n}(z){\rm d}z\right|}\right) \| n- \tilde{n} \|_{X_k}\\
\leq & C_{k,\Gamma}\f{\| n \|_{X_k}}{\left|\int_{\mathbb{R}} n(z){\rm d}z\right|}\f{\| \tilde{n} \|_{X_k}}{\left|\int_{\mathbb{R}}\tilde{n}(z){\rm d}z\right|}\| n- \tilde{n} \|_{X_k}.
\end{aligned}
\]
We obtain with similar and even simpler computations that there is a constant $C_{k,\Gamma}$ for which: 
\[
\| T[n] - T[\tilde{n}] \|_{C_b(\mathbb R)} \leq  C_{k,\Gamma}\f{\| n \|_{X_k}}{\left|\int_{\mathbb{R}} n(z){\rm d}z\right|}\f{\| \tilde{n} \|_{X_k}}{\left|\int_{\mathbb{R}}\tilde{n}(z){\rm d}z\right|}\| n- \tilde{n} \|_{X_k}.
\]{\qed}
\end{proof}

This lemma entails that the mapping $T$ extends to a continuous mapping $C([0,T_0] ; X_k)  \cap C_b([0,T_0] \times \mathbb R)) \to C([0,T_0]; X_k) \cap C_b([0,T_0] \times \mathbb R).$ We point out also that a straight forward adaptation of the above proof entails the following pointwise bound {\MH for nonnegative $n \in \mathcal U_k$:}
\begin{equation} \label{eq_Cbw}
|T[n](x)| \leq \dfrac{C_{k}}{w_k(x)} \|n\|_{X_k}  \qquad 
\end{equation}
where $C_{k}$ is a constant independent of $n .$  In particular,  if $n$ is {\MH a nonnegative} solution to \eqref{eq_main} 
on $(0,T_0)$ with initial data $n_0 \in \mathcal U_k$ we have that
$\partial_t n(t,x) \in C([0,T_0])$ for all $x \in \mathbb R$ and combining the latter inequality with \eqref{eq_bis} we infer that:
\begin{equation} \label{eq_controldtn}
|\partial_t n(t,x)| \leq \dfrac{C_{k}}{w_k(x)} \|n(t,\cdot)\|_{X_k} + (A_m w_{p+2}(x) + \kappa \rho) n(t,x). 
\end{equation}
Given $j < k - (p+3)$,  classical parameter integral arguments entail in particular that
\[
M_j(t) := \int_{\mathbb R} x^j n(t,x){\rm d}x  \in C^1([0,T_0])
\]
with:
\[
\dot{M}_j(t) = \int_{\mathbb R} x^j \partial_t n(t,x){\rm d}x  \quad \forall \, t \in [0,T_0].
\]
We use without mention this regularity property in the main body of  the article.

\subsection{Existence of solutions}
In case $m$ is bounded,  the mapping 
\[
n \mapsto \left(m + \kappa \int_{\mathbb R} n \right)n 
\]
is clearly locally lipschitz on $\mathcal U_k.$ Combining this remark with {Lemma \ref{lem_Tlip}}, we obtain via a direct application of the Cauchy-Lipschitz theorem that,  given $n_0 \in \mathcal U_k.$ 
there exists $T_0$ depending decreasingly on 
\[
{\MH \|n_0\|_{X_k} \left( 1 + \dfrac{1}{\int_{\mathbb R} n_0} \right)}
\]
such that for arbitrary $\tilde{T}_0 < T_0$ there is a unique solution $n$ to \eqref{eq_main} on $(0,\tilde{T}_0).$ 
We point out that the Cauchy-Lipschitz theorem yields also the further regularity $n \in C^1([0,\tilde{T}_0);X_k).$ 
Furthermore, it entails the existence of a unique non-extendable solution defined on $[0,\tilde{T}_0)$ with the blow-up alternative:
\begin{itemize}
\item either $\tilde{T}_0 = +\infty$
\item or $\tilde{T}_0 < \infty$ and 
\[
\limsup_{t \to \tilde{T}_0} \dfrac{1}{\int_{\mathbb R} n(t,x){\rm d}x} + \|n(t,\cdot)\|_{X_k} = +\infty
\]
\end{itemize}

\subsubsection{Quantitative estimates and global existence result}
To yield global existence of solution we now restrict to non-negative initial data. We prove {\em a priori} estimates satisfied by solutions to \eqref{eq_main}.  We point out that we derive estimates satisfied by any solution to \eqref{eq_main} according to {Definition \ref{def_solution}} not necessarily the ones obtained through the Cauchy-Lipschitz theorem above.  Moreover, we derive estimates that are valid for mortality rates $m$ that possibly diverge at infinity.
In the remainder of this subsection, we assume that $n_0 \in \mathcal U_{k},$ for some $k >0,$ is nonnegative. We note that $n_0$ has non-zero {integral} so that it may not vanish identically on $\mathbb R.$ We assume that $n$ is  a solution to \eqref{eq_main} on $(0,T)$ according to {Definition \ref{def_solution}}.  

\medskip

Firstly, we show that positivity propagates in the solution:
\begin{lemma}
{We assume that $n_0(x)\geq 0$ for all $x\in \mathbb R$.} There holds:
\[
n(t,x) \geq  0, \quad \forall \, (t,x) \in [0,T] \times \mathbb R. 
\]

\end{lemma}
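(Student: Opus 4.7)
The plan rests on two observations: Lemma \ref{lem_Tlip}(ii), which says $T[n]\geq 0$ whenever $n\geq 0$, and the Duhamel representation obtained by viewing \eqref{eq_main} as a linear ODE in $t$ for each fixed $x$. Introducing the strictly positive integrating factor
\[
\mu(t,x) := \exp\!\Bigl(\varepsilon^{-2}\int_0^t (m(x)+\kappa\rho(s))\,{\rm d}s\Bigr),
\]
which is well defined since $m\geq 0$ and $\rho = \int n \in C([0,T])$ stays positive on $[0,T]$ (as $n(t,\cdot) \in \mathcal U_k$), integration yields
\[
n(t,x) \,=\, \frac{n_0(x)}{\mu(t,x)} + \frac{r}{\varepsilon^2}\int_0^t \frac{\mu(s,x)}{\mu(t,x)}\, T[n(s,\cdot)](x)\,{\rm d}s. \qquad (\star)
\]
If one already knew $n(s,\cdot)\geq 0$ for all $s\in[0,t]$, then Lemma \ref{lem_Tlip}(ii) would make both terms on the right-hand side nonnegative and we would be done. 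The rest of the argument is about converting this circular observation into a genuine proof.

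To do so, I would re-run the local existence proof within the cone of nonnegative functions and then invoke uniqueness. Fix $h>0$ small (to be chosen) and define on $C([0,h]; X_k\cap C_b(\mathbb R))$ the Picard iteration
\[
n^{(k+1)}(t,x) = \frac{n_0(x)}{\mu^{(k)}(t,x)} + \frac{r}{\varepsilon^2}\int_0^t \frac{\mu^{(k)}(s,x)}{\mu^{(k)}(t,x)}\, T[n^{(k)}(s,\cdot)](x)\,{\rm d}s,
\]
starting from $n^{(0)}\equiv n_0$, with $\mu^{(k)}$ built from $\rho^{(k)} = \int n^{(k)}$. By Lemma \ref{lem_Tlip}(i), for $h$ small enough (depending only on $\|n_0\|_{X_k}$ and $1/|\int n_0|$) this map is a contraction on a suitable small ball; by construction it also preserves nonnegativity, since $n^{(k)}\geq 0$ implies $\mu^{(k)}>0$ and $T[n^{(k)}]\geq 0$, hence $n^{(k+1)}\geq 0$. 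The unique Banach fixed point is thus nonnegative, and by the uniqueness part of Proposition \ref{prop:Cauchy} (which the appendix establishes via Cauchy--Lipschitz) it coincides with the restriction of $n$ to $[0,h]$. This gives $n\geq 0$ on $[0,h]$, and a standard finite-cover continuation, restarting from $n(h,\cdot)\geq 0$ as new initial datum, extends the conclusion to all of $[0,T]$.

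The main obstacle is precisely the circular dependence in $(\star)$: a direct open/closed continuation on $\{t : n\geq 0 \text{ on }[0,t]\}$ fails because nonnegativity of $n$ up to some time $t^*$ gives no sign information on $T[n(s,\cdot)]$ for $s>t^*$, and hence none on $n(t^*+h,\cdot)$ through $(\star)$. The Picard reformulation sidesteps this by producing a genuinely nonneg solution which one then identifies with $n$ via uniqueness. A secondary technicality is to keep $\rho^{(k)}$ bounded away from $0$ along the iteration so that Lemma \ref{lem_Tlip} applies uniformly; this follows from $\int n_0\neq 0$ together with continuity of $t\mapsto \int n^{(k)}(t,\cdot)$, provided $h$ is chosen small enough.
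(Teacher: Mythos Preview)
Your strategy is sound and genuinely different from the paper's. The paper does not re-run a fixed-point argument; instead it proves directly, for any solution in the sense of Definition~\ref{def_solution}, that the negative part must vanish. The key ingredient is a quantitative estimate on how $T$ propagates negativity: for $f\in L^1\cap C_b$ with positive integral,
\[
\|(T[f])_-\|_{L^\infty(\mathbb R)} \;\le\; \dfrac{4\|f\|_{L^1(\mathbb R)}}{\int_{\mathbb R} f}\,\|(f)_-\|_{L^\infty(\mathbb R)}.
\]
One then sets $\tilde n = n\,e^{-\Lambda_0 t}$ with $\Lambda_0$ large, assumes $\sup (\tilde n)_- = \delta>0$, and locates a point $(t_-,x_0)$ where $\tilde n$ first reaches $-\delta/2$ with $\partial_t\tilde n(t_-,x_0)\le 0$; plugging into the equation and using the negative-part estimate yields a contradiction. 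This is a pure maximum-principle argument, self-contained and independent of any uniqueness statement. Your approach, by contrast, produces a nonnegative candidate via the Duhamel--Picard iteration (which preserves the cone thanks to Lemma~\ref{lem_Tlip}(ii)) and then identifies it with $n$; it is more structural and has the side benefit that the mild formulation handles unbounded $m$ gracefully, since the factors $e^{-\varepsilon^{-2}(t-s)m(x)}$ are uniformly $\le 1$.

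One caveat on your write-up: do not invoke the uniqueness part of Proposition~\ref{prop:Cauchy} to make the identification. For unbounded $m$ the appendix proves uniqueness \emph{after} positivity (via Lemma~\ref{lem_gronwall}, which relies on the bounds of Lemma~\ref{lem_apbound}, themselves stated for nonnegative solutions), so citing it here would be circular. Fortunately you do not need it: the contraction mapping theorem already gives uniqueness of the fixed point of $(\star)$ in the small ball, and the given solution $n$ satisfies $(\star)$ and lies in that ball for $h$ small (by continuity of $t\mapsto n(t,\cdot)$ in $X_{\tilde k}$ for $\tilde k<k$). So the identification $n|_{[0,h]}=\hat n$ follows directly from your Picard argument, with no external appeal.
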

\begin{proof}
We show that $n$ remains non-negative on a small time interval $[0,T_0].$ The result extends to $[0,T]$ with a standard connectedness argument.  {\MH Indeed,  our argument propagates positivity as long as $n(t,\cdot)$ does not vanish identically. However,  if $n(t,\cdot) \equiv 0$ then the solution blows up in $t$.}
We split the proof into three steps.

\paragraph{\em Step 1.}  We obtain first a general property for the operator $T$ on $L^1(\mathbb R).$ Namely, given $f \in L^1(\mathbb R) \cap C_b(\mathbb R)$ with  positive {integral}, we have clearly via standard parameter-integral results that $T[f] \in C_b(\mathbb R)$.  We prove that:
\begin{equation} \label{eq_negT}
\| (T[f])_- \|_{L^{\infty}(\mathbb R)} \leq   \dfrac{4\|f\|_{L^1(\mathbb R)}}{\int_{\mathbb R} f(x){\rm d}x} \| (f)_{-}\|_{L^{\infty}(\mathbb R)},  
\end{equation}
where $(\cdot)_-$ denotes the negative part of continuous functions. Indeed, if we denote:
\[
D_- := \{x \in \mathbb R \text{ s.t. } f(x) < 0\} \qquad D_+ := \{x \in \mathbb R \text{ s.t. } f(x) \geq 0\}
\]
we remark that $f(y) f(y') \geq 0$ whenever $(y,y') \in D_- \times D_- \cup D_+ \times D_+.$ Therefore, we have:
\[
\begin{aligned}
(T[f])_-(x) &\leq - \int_{D_- \times D_+} \Gamma \left(x- \frac{y+y'}{2} \right) \dfrac{f(y) f(y')}{\int_{\mathbb R} f(z){\rm d}z}{\rm d}y {\rm d}y' +sym.  \\
                & \leq \dfrac{1}{\int_{\mathbb R} f(z){\rm d} z} \left(  \int_{D_- \times D_+} \Gamma \left(x- \frac{y+y'}{2} \right) {|f(y)| |f(y')|} {\rm d}y {\rm d}y + sym. \right) \\
                & \leq \dfrac{1}{\int_{\mathbb R} f(z){\rm d} z} \left(  \int_{D_- \times \mathbb R} \Gamma \left(x- \frac{y+y'}{2} \right) {\|(f)_-\|_{L^{\infty}(\mathbb R)} |f(y')|} {\rm d}y {\rm d}y + sym. \right) 
\end{aligned} 
\]
We conclude then by recalling that:
\[
\int_{\mathbb R} \Gamma \left(x- \frac{y+y'}{2} \right) {\rm d}y = 2.
\]

\paragraph{\em Step 2.} We consider now $n_0$ and its associated solution $n.$ Thanks to the remark after {\bf Definition \ref{def_solution}}, we note that
$n \in C([0,T]; L^1(\mathbb R))$ with $n(0,\cdot) = n_0.$ In particular, we can construct a small time $T_0$ such that:
\begin{equation} \label{eq_boundn}
\|n(t,\cdot)\|_{L^1(\mathbb R)} \leq 2 \|n_0\|_{L^1(\mathbb R)} \qquad 0 < \dfrac{\int_{\mathbb R} n_0(z){\rm d}z}{2} \leq \int_{\mathbb R}n(t,z){\rm d}z \qquad
\forall \, t \in [0,T_0]. 
\end{equation}
We choose $\Lambda_0 >32 r.$ Since $n_0 \geq 0$ we note that we have in particular:
\[
\Lambda_0 > 32r \dfrac{\|n_0\|_{L^1(\mathbb R)}}{\int_{\mathbb R} n_0(z){\rm d}z}.
\]
We construct then:
\[
\tilde{n}(t,x) = n(t,x) \exp\left( -\Lambda_0 t\right) \qquad \forall \, t \in [0,T].
\]
Like $n,$ we have that $\tilde{n} \in C_b([0,T]\times \mathbb R) \cap C([0,T] ; L^1(\mathbb R))$ and that
$\tilde{n}(\cdot, x) \in C^1([0,T])$ for all $x \in \mathbb R.$ In particular, for arbitrary $x \in \mathbb R,$ there holds:
\[
\partial_t \tilde{n} (t,x) = (\partial_tn (t,x) - \Lambda_0 n(t,x)) \exp(-\Lambda_0 t).
\]
Replacing $\partial_t n$ and arguing that $T[\cdot]$ is $1$-homogeneous, we obtain that:
\begin{equation} \label{eq_dtntilde}
\partial_t \tilde{n}(t,x) = r T[\tilde{n}(t,\cdot)](x) - (m(x) + \kappa \rho(t) + \Lambda_0) \tilde{n}(t,x) \text{ on $[0,T].$} 
\end{equation}

\paragraph{\em Step 3.} We assume now that $n$ becomes strictly negative on $[0,T_0]$ or equivalently that $\tilde{n}$ takes strictly negative values on $[0,T_0]. $ Since $\tilde{n} \in C_b([0,T_0] \times \mathbb R),$ we have:
\[
\max _{[0,T_0] \times \mathbb R}( \tilde{n})_-(t,x) = \delta > 0,
\]
and we can find $(t_0,x_0) \in (0,T_0) \times \mathbb R$ so that $\tilde{n}(t_0,x_0) \leq - \delta/2.$ Since $\tilde{n}(\cdot,x_0) \in C^1([0,T_0])$ satifies $\tilde{n}(0,x_0) = n_0(x_0) \geq 0$ we can then construct a time $t_- \in (0,T_0)$ such that $\tilde{n}(t,x_0) \geq \tilde{n}(t_-,x_0) = -\delta/2$ for all $t \leq t_-.$
In praticular, there holds  :
\[
\partial_t \tilde{n}(t_-,x_0) \leq 0.
\]
Replacing with \eqref{eq_dtntilde} and recalling that $\tilde{n}(t_-,x_0) = -\delta/2$ we infer that:
\[
r (T[\tilde{n}])_- (t_-,x_0) \geq ( m + \kappa \rho + \Lambda_0 )  \dfrac{\delta}{2} \geq \dfrac{\Lambda_0}{2} \delta > 16 r \delta.
\]
However, we can apply \eqref{eq_negT} to $f = \tilde{n}(t_-,\cdot)$ and we obtain:
\[
\begin{aligned}
r (T[\tilde{n}])_-(t_-,x_0) &  \leq \dfrac{4r \|\tilde{n}(t_-,\cdot)\|_{L^1(\mathbb R)}}{\int_{\mathbb R} \tilde{n}(t_-,z){\rm d}z} \|(\tilde{n})_-(t_-,\cdot)\|_{L^{\infty}(\mathbb R)}\\
&\leq \dfrac{4r \|{n}(t_-,\cdot)\|_{L^1(\mathbb R)}}{\int_{\mathbb R} {n}(t_-,z){\rm d}z}\|(\tilde{n})_-\|_{L^{\infty}((0,T_0) \times \mathbb R)}  \\
&\leq 16 r \dfrac{\|n_0 \|_{L^1(\mathbb R)}}{\int_{\mathbb R} {n}_0(z){\rm d}z} \delta = 16r\delta. 
\end{aligned}
\]
Since $r \delta >0$ we obtain a contradiction. Hence $n$ may not take strictly negative values on $[0,T_0].$\qed
\end{proof}

\medskip

With this positivity result at-hand we show now quantitative bounds satisfied by the solution.
\begin{lemma} \label{lem_apbound}
We have:
\begin{itemize}
\item the pointwise bounds:
\begin{align}
 & n(t,x) \geq n_0(t,x) \exp\left[-\left(m(x) + \kappa  \max \left(\int_{\mathbb R} n_0(z){\rm d}z , \dfrac{r}{\kappa} \right) \right) t \right] ,\label{eq_positivite}\\
&  n(t,x) \leq n_0(x) + r\|\Gamma\|_{L^{\infty}(\mathbb R)}   \max \left(\int_{\mathbb R} n_0(z){\rm d}z , \dfrac{r}{\kappa} \right)t ,\label{eq_uniforme}
\end{align}
\item the averaged bounds:
\begin{align}
& \int_{\mathbb R} n(t,x) \leq \max \left(\int_{\mathbb R} n_0(z){\rm d}z , \dfrac{r}{\kappa} \right)  \label{eq_borneL1},\\
& \|n(t,\cdot)\|_{X_k} \leq  \|n_0\|_{X_k}  \exp(C_{k}t)  , \label{eq_borneXp}
\end{align}
where $C_{k}$ is a constant depending on the kernel $\Gamma$, $r$ and  $k$ only. 
\end{itemize}
\end{lemma}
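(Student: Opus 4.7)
The four estimates are naturally established in the order (\ref{eq_borneL1}) $\to$ (\ref{eq_positivite})--(\ref{eq_uniforme}) $\to$ (\ref{eq_borneXp}), since the pointwise bounds rely on the $L^1$ control of $\rho$, and the weighted bound is a refinement of the same circle of ideas. Throughout I use that $\int_{\mathbb R} \Gamma = 1$, that $T[n] \geq 0$ whenever $n \geq 0$ (from Lemma~\ref{lem_Tlip}), and the positivity of $n$ just established.

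To obtain (\ref{eq_borneL1}), I integrate \eqref{eq_main} against $dx$. Fubini combined with $\int \Gamma = 1$ gives $\int T[n](t,x)\,dx = \rho(t)$, so
\[
\dot \rho = r\rho - \int_{\mathbb R} m(x) n(t,x)\,dx - \kappa \rho^2 \leq \rho(r - \kappa \rho),
\]
where I have discarded the nonnegative term $\int m n$. A direct comparison with the logistic ODE then yields $\rho(t) \le \max(\rho(0), r/\kappa)$, which is exactly (\ref{eq_borneL1}). Denote this bound by $\rho_{\max}$ below.

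For the pointwise bounds, I exploit that \eqref{eq_main} is, at each fixed $x$, a scalar ODE in $t$. Since $T[n] \geq 0$, we have $\partial_t n(t,x) \geq -(m(x) + \kappa \rho(t))\, n(t,x) \geq -(m(x) + \kappa \rho_{\max})\,n(t,x)$, and Gr\"onwall's lemma applied to $n(\cdot,x)$ gives (\ref{eq_positivite}). For the upper bound, the estimate $T[n](t,x) \le \|\Gamma\|_{L^\infty} \rho(t) \le \|\Gamma\|_{L^\infty} \rho_{\max}$ (dropping the $-(m+\kappa\rho)n$ term which is $\le 0$) leads to $\partial_t n(t,x) \le r \|\Gamma\|_{L^\infty} \rho_{\max}$, and integrating in time produces (\ref{eq_uniforme}).

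The only nontrivial step is (\ref{eq_borneXp}). I multiply \eqref{eq_main} by $w_k(x)$ and integrate; dropping the nonpositive term $-\int w_k (m + \kappa\rho) n \leq 0$ reduces matters to controlling $\int w_k(x) T[n](t,x)\,dx$ by $\|n(t,\cdot)\|_{X_k}$. The device, as already used in the proof of Lemma~\ref{lem_Tlip}, is the subadditivity of $w_k$: there exists $C_k$ such that
\[
w_k(x) \leq C_k \left( w_k\!\left(x - \tfrac{y+y'}{2}\right) + w_k(y) + w_k(y') \right).
\]
Splitting the integral according to this decomposition and using Fubini, the first piece produces the constant $\int w_k \Gamma < \infty$ times $\rho$, while the two symmetric pieces each produce $\|n\|_{X_k}$ (after using $\int n(y')/\rho\,dy' = 1$). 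Since $w_k \geq 1$ gives $\rho \leq \|n\|_{X_k}$, I obtain
\[
\int_{\mathbb R} w_k(x)\, T[n](t,x)\,dx \leq C_k \|n(t,\cdot)\|_{X_k},
\]
for a possibly enlarged constant $C_k$ depending only on $k$ and $\Gamma$. Inserting this into the equation for $\|n\|_{X_k}$ yields $\frac{d}{dt}\|n(t,\cdot)\|_{X_k} \leq C_k r \|n(t,\cdot)\|_{X_k}$, and Gr\"onwall concludes (\ref{eq_borneXp}). The expected obstacle is purely bookkeeping: checking that the differentiation under the integral sign and the Fubini manipulations are justified by the regularity of $n$ provided by Definition~\ref{def_solution} and the pointwise control \eqref{eq_controldtn} of $\partial_t n$.
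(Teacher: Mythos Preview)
Your proof is correct and follows essentially the same approach as the paper: you establish \eqref{eq_borneL1} first via the logistic differential inequality for $\rho$, then derive the pointwise bounds \eqref{eq_positivite}--\eqref{eq_uniforme} from the signs of $T[n]$ and $-(m+\kappa\rho)n$, and finally obtain \eqref{eq_borneXp} by multiplying by $w_k$, dropping the nonpositive term, and bounding $\int w_k T[n]$ via the subadditivity inequality $w_k(x)\le C_k(w_k(x-\tfrac{y+y'}{2})+w_k(y)+w_k(y'))$ and Fubini. The paper's argument is the same in substance (it organises the last step via the change of variable $X=x-\tfrac{y+y'}{2}$, obtaining the explicit constant $C_k=3r\|w_k\Gamma\|_{L^1}$), and your acknowledgment of the bookkeeping issue for differentiation under the integral is apt, since the paper does not detail it either.
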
 
\begin{proof}
We recall that $n$ is a solution of 
\[ \partial_t   n = r T [ n ]  
-   \left( m  + \kappa \rho(t) \right) n .\]
In this lemma we assume that $n$ is non-negative. 
For the bound \eqref{eq_borneL1}, we first take the average in the equation, 
\[\partial_t  \int_{\mathbb R} n(t,x) {\rm d}x = r \int_{\mathbb R}T [ n ](x)  {\rm d}x
-   \int_{\mathbb R}m (x) n(t,x) {\rm d}x -\kappa \left(\int_{\mathbb R} n(t,x) {\rm d}x \right)^2. \]
Integrating in time between $0$ and $t$ gives 
\[ \int_{\mathbb R} n(t,x) {\rm d}x- \int_{\mathbb R} n_0(x) {\rm d}x \leq r \int_{\mathbb R}n (t,x)  {\rm d}x
-   \int_{\mathbb R}m (x) n(t,x) {\rm d}x -\kappa \left(\int_{\mathbb R} n(t,x) {\rm d}x \right)^2,\]
so that
\[ \int_{\mathbb R}(1+m(x)) n(t,x) {\rm d}x- \int_{\mathbb R} n_0(x) {\rm d}x \leq r \int_{\mathbb R}n (t,x)  {\rm d}x -\kappa \left(\int_{\mathbb R} n(t,x) {\rm d}x \right)^2.\]
Either $\int_{\mathbb R}n (t,x)  {\rm d}x \leq \int_{\mathbb R}n_0 (x)  {\rm d}x$ or $\int_{\mathbb R}(1+m(x)) n(t,x) {\rm d}x\geq \int_{\mathbb R}n (t,x)  {\rm d}x \geq \int_{\mathbb R}n_0 (x){\rm d}x \geq 0 $ so that $\int_{\mathbb R}n (t,x)  {\rm d}x \leq \dfrac{r}{\kappa}$ which gives \eqref{eq_borneL1}.
For the bound \eqref{eq_positivite}, we use ii) of Lemma \ref{lem_Tlip} so that $T[n] \geq 0$ and using \eqref{eq_borneL1}
\[\partial_t n \geq -(m+\kappa \rho(t))n\geq -\left(m+\kappa \max \left(\int_{\mathbb R} n_0(x){\rm d}x , \dfrac{r}{\kappa} \right) \right)n .\]
We deduce 
\[\partial_t \left(n \exp\left(\left(m+\kappa \max \left(\int_{\mathbb R} n_0(x){\rm d}x , \dfrac{r}{\kappa} \right)\right)t\right)\right)\geq 0\]
so 
\[n(t,x)\geq n_0(x) \exp\left(-\left(m+\kappa \max \left(\int_{\mathbb R} n_0(x){\rm d}x , \dfrac{r}{\kappa} \right)\right)t\right).\]
For the bound \eqref{eq_uniforme}, using that $n\geq 0$ and \eqref{eq_borneL1} we get 
\[\partial_t n \leq rT[n]\leq r\|\Gamma\|_{L^{\infty}(\mathbb R)}\rho(t)\leq r\|\Gamma\|_{L^{\infty}(\mathbb R)} \max \left(\int_{\mathbb R} n_0(x){\rm d}x , \dfrac{r}{\kappa} \right) \]
which gives \eqref{eq_uniforme}.
For the bound \eqref{eq_borneXp}, for a constant $C_k$ to be fixed, we compute 
\[\begin{aligned}
\partial_t \left(\exp(-C_k t)\|n(t,\cdot)\|_{X_k} \right) & = \int_{\mathbb R} w_k(x)  \exp(-C_k t)\partial_t  n(t,x) {\rm d}x-C_k\exp(-C_k t) \|n(t,\cdot)\|_{X_k}
\\ & \leq   \exp(-C_k t) r\int_{\mathbb R}  w_k(x)T [ n ](t,x){\rm d}x -C_k \exp(-C_k t)\|n(t,\cdot)\|_{X_k}
\end{aligned}
\] 
 We bound thanks to the change of variable $X=x-\dfrac{y+y'}{2}$ and the inequality $1 \leq w_k(X+\dfrac{y+y'}{2})\leq w_k(X) + w_k(y) + w_k(y')$, 
\[\begin{aligned}\int_{\mathbb R}  w_k(x)T [ n ](t,x){\rm d}x &\leq \int_{\mathbb R}\int_{\mathbb R} \int_{\mathbb R}w_k(X+\dfrac{y+y'}{2}) \Gamma \left(X\right)  n(t,y)\f{n(t,y')}{\rho(t)} {\rm d}y{\rm d}y' {\rm d}X
\\ &\leq 3 \|w_k \Gamma\|_{L^{1}(\mathbb R)} \|n(t,\cdot)\|_{X_k} \label{etape_decoupe}.
\end{aligned}\]
We then deduce, choosing $C_k: = 3r \|w_k \Gamma\|_{L^1(\mathbb R)}$,
\[\begin{aligned}
\partial_t \left(\exp(-C_k t)\|n(t,\cdot)\|_{X_k} \right) & \leq    3r \|w_k \Gamma\|_{\infty} \|n(t,\cdot)\|_{X_k}\exp(-C_k t) -C_k  \|n(t,\cdot)\|_{X_k} \exp(-C_k t)
\\ & \leq 0,
\end{aligned}
\] 
which gives the result.
{\qed}
\bigskip
\end{proof}

We point out that the pointwise bound {\eqref{eq_positivite}}  turns into a bound from below for the {integral} of $n(t,\cdot).$ Indeed, in case $n_0 \in \mathcal U_k \cap C_b(\mathbb R)$ is  nonnegative, there exists a compact interval $I$ such that 
\[
\int_I n_0(x){\rm d}x > 0.
\]
We have then:
\[
\int_{\mathbb R} n(t,x){\rm d}x \geq \int_{I} n(t,x){\rm d}x \geq  \exp \left[ - \left(\|m\|_{L^{\infty}(I)} + \kappa  \max \left(\int_{\mathbb R} n_0(x){\rm d}x , \dfrac{r}{\kappa} \right) \right) t \right] \int_{I} n_0(x){\rm d}x >0.
\]
In case $m$ is globally bounded,  we have then that the second item of the blow-up alternative never occurs for a nonnegative initial data in $\mathcal U_k$ so that solutions are global.

\subsubsection{Existence result for unbounded $m.$}
We proceed with extending the existence result to a possibly unbounded $m,$ we apply a compactness argument.  
From now on,  the initial data $n_0 \in \mathcal U_k \cap C_b(\mathbb R)$
is fixed.  Firstly,  we truncate $m$ and apply the previous construction yielding a sequence of approximate global solutions {$(n_{p})_{p \in \mathbb N}$}. We can then use the {\em a priori} bounds constructed in the latter lemma. These bounds enable to extract a subsequence converging in $L^{\infty}((0,T_0) \times \mathbb R) -w*$ (whatever $T_0 >0$) but is insufficient to obtain that the limit is continuous and satisfies  \eqref{eq_main}. For this purpose we provide the following a priori estimate:

\begin{lemma}
Let $R$ and $T_0$ positive and assume that $n$ is a solution to \eqref{eq_main} on $(0,T_0).$ Then, we have for
any $t\in [0,T_0]:$
\[
\sup_{(x,\tilde{x}) \in [-R,R]^2} |n(t,x) - n(t,\tilde{x})|
\leq 
\sup_{(x,\tilde{x}) \in [-R,R]^2} |n_0(x) - n_0(\tilde{x})| + 
C_{0}|x- \tilde{x}| \left( 1 + \|m\|_{C^1([-R,R])} \right) ,
\] 
where $C_0$ depends only on initial data, {\MH $T_0,$} $r$ and $\kappa$.
\end{lemma}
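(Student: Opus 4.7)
The plan is to apply a Duhamel/integrating-factor argument to the difference $u(t) := n(t,x) - n(t,\tilde x)$ for an arbitrary but fixed pair $(x,\tilde x) \in [-R,R]^2$. Subtracting the two copies of \eqref{eq_main} at $x$ and $\tilde x$ gives the linear scalar ODE
\begin{equation*}
u'(t) + \bigl(m(x) + \kappa \rho(t)\bigr) u(t) = r \bigl(T[n(t,\cdot)](x) - T[n(t,\cdot)](\tilde x)\bigr) - \bigl(m(x) - m(\tilde x)\bigr) n(t,\tilde x) =: F(t).
\end{equation*}
Since $m \geq 0$ by (H0) and $\rho(t) \geq 0$ by the positivity lemma, the integrating factor $\exp(\int_0^t (m(x) + \kappa\rho(s))\,ds)$ is bounded below by $1$, so the variation-of-constants formula immediately yields
\begin{equation*}
|u(t)| \leq |u(0)| + \int_0^t |F(s)|\,ds, \qquad \forall\, t \in [0,T_0].
\end{equation*}

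The two pieces of $F$ are then controlled separately by purely local estimates. For the reproduction term, since $\Gamma \in W^{1,\infty}(\mathbb R)$, the finite-increment theorem applied under the integral sign gives
\begin{equation*}
\bigl|T[n(s,\cdot)](x) - T[n(s,\cdot)](\tilde x)\bigr| \leq \|\Gamma'\|_{L^\infty(\mathbb R)}\,|x - \tilde x|\,\|n(s,\cdot)\|_{L^1(\mathbb R)},
\end{equation*}
and $\|n(s,\cdot)\|_{L^1}$ is uniformly controlled by $\max(\|n_0\|_{L^1}, r/\kappa)$ via \eqref{eq_borneL1}. For the mortality term, since $x,\tilde x \in [-R,R]$,
\begin{equation*}
|m(x)-m(\tilde x)| \leq \|m'\|_{L^\infty([-R,R])}\,|x-\tilde x| \leq \|m\|_{C^1([-R,R])}\,|x-\tilde x|,
\end{equation*}
while the uniform bound \eqref{eq_uniforme} yields $n(s,\tilde x) \leq \|n_0\|_{L^\infty(\mathbb R)} + r\|\Gamma\|_{L^\infty}\max(\|n_0\|_{L^1},r/\kappa)\,T_0 =: C_0'$, a constant that depends only on the initial data, $T_0,r,\kappa$.

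Combining these three controls yields, for every $t \in [0,T_0]$,
\begin{equation*}
|F(s)| \leq C_0\, |x-\tilde x|\,\bigl(1 + \|m\|_{C^1([-R,R])}\bigr),
\end{equation*}
and integrating in time and absorbing the $T_0$ into $C_0$ produces the claimed estimate. Taking the supremum in $(x,\tilde x) \in [-R,R]^2$ on both sides then gives the statement (the right-hand side is already monotone in $|x-\tilde x|$ and dominates $|n_0(x)-n_0(\tilde x)|$). There is no real obstacle here: the argument is essentially a textbook Duhamel estimate, and all the ingredients --- the $L^1$ bound \eqref{eq_borneL1}, the pointwise upper bound \eqref{eq_uniforme}, and the smoothness of $\Gamma$ --- are already in place from Lemma \ref{lem_apbound}. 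The only mild care needed is to invoke these a priori bounds on the specific class of solutions from Definition \ref{def_solution}, for which the nonnegativity lemma applies so that $\rho \geq 0$ throughout $[0,T_0]$.
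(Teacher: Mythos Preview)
Your proof is correct and follows essentially the same route as the paper's: define the difference $\delta(t) = n(t,x)-n(t,\tilde x)$, derive its linear ODE, apply a Duhamel/integrating-factor estimate using $m\geq 0$ and $\rho\geq 0$, then bound the two source terms via the Lipschitz property of $\Gamma$ (together with the $L^1$ bound \eqref{eq_borneL1}) and the mean-value theorem for $m$ on $[-R,R]$ (together with the pointwise bound \eqref{eq_uniforme}). The only cosmetic difference is that the paper puts $m(\tilde x)$ in the integrating factor and $n(t,x)$ in the source, while you do the symmetric choice; this is immaterial.
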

\begin{proof}
Let us define $\delta(t,x,\tilde{x})=n(t,x)-n(t,\tilde{x})$ so that 
\[\partial_t \delta = r (T[n] (t,x) -T[n](t,\tilde{x})) - (m(x)-m(\tilde{x}))n(t,x) -(m(\tilde{x})+\kappa\rho)\delta.\]
By Duhamel formula we get
\[\begin{aligned}
&\delta(t,x,\tilde{x})=\delta(0,x,\tilde{x})\exp\left(-m(\tilde{x})t-\int_{0}^{t} \kappa\rho(s) {\rm d}s \right)
\\& + \int_{0}^{t} (r(T[n](s,x)-T[n](s,\tilde{x}))-(m(x)-m(\tilde{x}))n(s,\tilde{x}))\exp\left(-m(\tilde{x})(t-s)-\int_{s}^{t} \kappa\rho(\tau) {\rm d}\tau \right){\rm d}s. 
\end{aligned}\]
We then deduce for $(x,\tilde{x})\in [-R,R]^2$ {\MH that,} 
\[|\delta (t,x,\tilde{x}) |\leq |\delta (0,x,\tilde{x})| + \int_{0}^{t} r|T[n](s,x)-T[n](s,\tilde{x})| {\rm d}s + |x-\tilde{x}|\|m\|_{C^1([-R,R])}\|n\|_{L^{\infty}} {t}.\]
Using that $x\rightarrow \exp(-x^2)$ is Lipschitz on $\mathbb{R}$, we get $|T[n](s,x)-T[n](s,\tilde{x})|\leq C |x-\tilde{x}| \rho(s)$ and we deduce the result with \eqref{eq_uniforme} and \eqref{eq_borneL1}.
{\qed}
\bigskip
\end{proof}

We emphasize that the above {\em a priori} estimate holds only on bounded intervals of $\mathbb R$ and is independent of the behavior of $m$ at infinity.  We can then apply this estimate to the approximate solutions $({n_p})$ yielding a local uniform bound.  Since we have also an {\em a priori} estimate
\eqref{eq_controldtn} on $(\partial_t {n_p})$ we infer that the sequence of approximate solutions $({n_p})$ is also compact in $C([0,T_0] \times [-R,R])$ for arbitrary positive $T_0,R.$ Eventually,  we combine the subsequent local strong convergence of a subsequence of approximate solutions {with} the uniform bounds in $L^{\infty}((0,T_0);X_k)$ to yield that the limit lies in $L^{\infty}(0,T_0;X_k)$ also and that this very subsequence converges also in $C([0,T_0];X_{\tilde{k}})$ for arbitrary $\tilde{k}  < k.$ In particular we have convergence in $C_{loc}([0,\infty),L^1(\mathbb R))$ and we can pass to the limit in the equations satisfied by approximate solutions. This entails that the limit is a solution to \eqref{eq_main}.

\subsection{Uniqueness of solutions}
 
To end up this section, we prove that, given $n_0 \in \mathcal U_k \cap C_b(\mathbb R),$ the solution we constructed above in case $m$ is not necessarily bounded is the unique solution in the sense of { Definition \ref{def_solution}}.  We restrict to the case {$k>3.$}

\medskip

For this, we denote $n$ the solution constructed above and $\tilde{n}$ a possible other solution with the same initial data $n_0 \in \mathcal U_k \cap C_b(\mathbb R)$.  We note that both solutions satisfy the conclusions of {Lemma \ref{lem_apbound}}. 
We pick then $k_0 \in  (3,k)$ and define
\[
\Delta_0(t) = \int_{\mathbb R} |n(t,x) - \tilde{n}(t,x)|^2 w_{k_0}(x){\rm d}x.
\]
Since $n$ and $\tilde{n}$ are bounded and continuous with values in $X_{\tilde{k}}$ whatever $\tilde{k} \in (k_0,k)$ we infer that 
$\Delta_0 \in C([0,\infty))$ and vanishes initially.  The key remark in the proof is then the following Gronwall inequality: 
\begin{lemma} \label{lem_gronwall}
Given $T_0 >0$ there exists a constant $C_{0}$ that depends only on initial data $n_0$, $k_0$, $T_0$, $r$, $\kappa$, $m$ and $\Gamma$  for which:
\[
\Delta_0(\tau) - \Delta_0(\sigma) \leq C_{0} \int_{\tau}^{\sigma} \Delta_0(s){\rm d}s.
\]
\end{lemma}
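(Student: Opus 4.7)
Set $\delta = n - \tilde n$, $\rho = \int n$, $\tilde \rho = \int \tilde n$. Subtracting the two copies of \eqref{eq_main} yields
\[
\partial_t \delta = r \bigl(T[n] - T[\tilde n]\bigr) - m\,\delta - \kappa \rho\, \delta - \kappa (\rho - \tilde\rho)\,\tilde n.
\]
Since $n(\cdot,x), \tilde n(\cdot,x) \in C^1([0,T_0])$ for every $x$, we have the pointwise identity $\partial_t \delta^2 = 2\delta\,\partial_t \delta$. Multiplying by $w_{k_0}$, integrating first in time between $\sigma$ and $\tau$, and then in $x$, and justifying Fubini via \eqref{eq_controldtn} together with the $L^{\infty}$ and $X_k$ bounds of Lemma \ref{lem_apbound} (taking $k$ large enough relative to $k_0$ and $p$), gives
\[
\Delta_0(\tau) - \Delta_0(\sigma) \,=\, 2\int_\sigma^\tau \int_{\mathbb R} \delta\, \partial_t \delta\, w_{k_0} \, {\rm d}x\, {\rm d}s.
\]
The plan is then to bound the right-hand side by $C_0 \int_\sigma^\tau \Delta_0(s)\,{\rm d}s$.

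The key observation is that the two contributions $-2\int m\, \delta^2 w_{k_0}\,{\rm d}x$ and $-2\kappa \rho \int \delta^2 w_{k_0}\,{\rm d}x$ are nonpositive thanks to (H0) and $\rho \geq 0$, and can simply be discarded. This is precisely what allows the argument to accommodate an unbounded mortality rate~$m$. Only the reproduction term and the mass-defect term remain to be controlled.

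The mass-defect term is handled by Cauchy-Schwarz. First, since $k_0 > 1$,
\[
|\rho - \tilde \rho| \,=\, \left|\textstyle\int_{\mathbb R} \delta\, {\rm d}x\right| \,\leq\, \left(\textstyle\int_{\mathbb R} w_{k_0}^{-1}\,{\rm d}x\right)^{1/2} \Delta_0^{1/2} \,\leq\, C\, \Delta_0^{1/2}.
\]
Second, $\left|\int \tilde n\, \delta\, w_{k_0}\,{\rm d}x\right| \leq \|\tilde n\|_{L^2(w_{k_0})}\, \Delta_0^{1/2}$, with $\|\tilde n\|_{L^2(w_{k_0})}^2 \leq \|\tilde n\|_{L^{\infty}}\,\|\tilde n\|_{X_{k_0}}$ finite by Lemma \ref{lem_apbound}. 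Combining the two gives $|\kappa(\rho-\tilde\rho)\int \tilde n \, \delta\, w_{k_0}\,{\rm d}x| \leq C \Delta_0$.

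The reproduction term is the main technical point. I would decompose
\[
T[n] - T[\tilde n] \,=\, \frac{1}{\rho} \iint \Gamma_*\, n(y)\delta(y')\,{\rm d}y\,{\rm d}y' + \frac{1}{\rho}\iint \Gamma_*\, \tilde n(y')\delta(y)\,{\rm d}y\,{\rm d}y' + \frac{\tilde\rho - \rho}{\rho\tilde\rho} \iint \Gamma_*\, \tilde n(y)\tilde n(y')\,{\rm d}y\,{\rm d}y',
\]
with $\Gamma_* := \Gamma(x-(y+y')/2)$. For each of these three pieces, a Cauchy-Schwarz inequality in $(y,y')$ against the finite measure induced by $\Gamma$, combined with the splitting $w_{k_0}(x) \leq C_{k_0}(w_{k_0}(x-(y+y')/2) + w_{k_0}(y) + w_{k_0}(y'))$ and the finiteness of $\int \Gamma\, w_{k_0}$, yields $\int (T[n] - T[\tilde n])^2 w_{k_0}\,{\rm d}x \leq K\,\Delta_0$. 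Here $K$ depends only on the $L^{\infty}$ and $X_{k_0}$ bounds of Lemma \ref{lem_apbound} and on positive lower bounds for $\rho, \tilde\rho$ on $[0,T_0]$ (available from the remark following Lemma \ref{lem_apbound}). A final Cauchy-Schwarz in $x$ then controls $\left|\int \delta\, (T[n] - T[\tilde n])\, w_{k_0}\,{\rm d}x\right|$ by $C\,\Delta_0$. Summing all the bounds delivers the claimed Gronwall-type inequality. The main obstacle is the careful sign bookkeeping needed so that the unbounded term $m$ always enters with a favorable sign.
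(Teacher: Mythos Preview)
Your proposal is correct and its overall structure---the energy identity for $\delta^2 w_{k_0}$, discarding the nonpositive contributions $-2\int m\,\delta^2 w_{k_0}$ and $-2\kappa\rho\int\delta^2 w_{k_0}$, and the Cauchy--Schwarz treatment of the mass-defect term---matches the paper exactly. The paper works on $[-R,R]$ and sends $R\to\infty$ at the end rather than invoking Fubini directly, but both routes are fine.

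The genuine difference is in the reproduction term. The paper does \emph{not} estimate $\int (T[n]-T[\tilde n])^2 w_{k_0}$ by a direct Cauchy--Schwarz in the kernel variables as you propose. Instead it writes
\[
\int_{\mathbb R} |T[n]-T[\tilde n]|^2 w_{k_0}
\;\le\; \|w_{\ell}(T[n]-T[\tilde n])\|_{C_b}\,\|T[n]-T[\tilde n]\|_{X_{k_0-\ell}},
\]
bounds the first factor via the pointwise decay \eqref{eq_Cbw} and the second via the $X_{k_0-\ell}$-Lipschitz estimate of Lemma~\ref{lem_Tlip}, and then converts $\|n-\tilde n\|_{X_{k_0-\ell}}$ back to $\Delta_0^{1/2}$ by another Cauchy--Schwarz, which forces the choice $2(k_0-\ell)+(1+\mu)=k_0$ and hence the restriction $k_0>3$. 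Your kernel-variable argument is more elementary: with the nonnegative measure $\Gamma_* n(y)\,dy\,dy'$ (total mass $2\rho$ for each $x$) one gets $|A(x)|^2\le \tfrac{2}{\rho}\iint \Gamma_* n(y)\delta(y')^2$, and after the weight splitting and the $x$-integration this yields $\int A^2 w_{k_0}\le C\Delta_0$ using only $k_0>1$. So your route actually removes the need for $k_0>3$ in this lemma. One small imprecision: the phrase ``finite measure induced by $\Gamma$'' should be read as $\Gamma_* n(y)\,dy\,dy'$ (respectively $\Gamma_* \tilde n(y')\,dy\,dy'$), since $\Gamma_*\,dy\,dy'$ alone has infinite mass on $\mathbb R^2$.
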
 

Since $\Delta_0$ is continuous, we conclude by applying the Gronwall lemma that $\Delta_0$ vanishes identically on $[0,T_0]$ and then globally.  We end up this appendix by a proof of {\bf Lemma \ref{lem_gronwall}}. 

\begin{proof}
Let $\delta (t,x):= n(t,x) - \tilde{n}(t,x)$ and define $\Delta_0^R(t) =\displaystyle \int_{-R}^R |n(t,x) - \tilde{n}(t,x)|^2 w_{k_0}(x){\rm d}x$. Noticing that $\delta$ satisfies 
\[ \partial_t \delta(t,x)= r(T[n]-T[\tilde{n}])(t,x)-(m+\kappa \rho(t))\delta(t,x)-\kappa(\rho(t)-\tilde{\rho}(t))\tilde{n}(t,x), \]
we multiply this equation by $\delta w_{k_0}$ and integrate on $[\sigma,\tau]\times [-R,R]$ and get
\[\dfrac{1}{2}(\Delta_0^R(\tau)-\Delta_0^R(\sigma)) +\int_{\sigma}^{\tau} \int_{-R}^{R} (m+\kappa \rho(s)) \delta^2(s,x) w_{k_0}(x){\rm d}x{\rm d}s
=I_1+I_2 \]
where
\[I_1=-\kappa \int_{\sigma}^{\tau} \int_{-R}^{R} (\rho(s)-\tilde{\rho}(s)) \tilde{n}(s,x)\delta(s,x) w_{k_0}(x){\rm d}x{\rm d}s\]
and
\[I_2=r\int_{\sigma}^{\tau} \int_{-R}^{R}  (T[n]-T[\tilde{n}])(s,x)\delta(s,x) w_{k_0}(x){\rm d}x{\rm d}s.\]
Using that $\int_{\sigma}^{\tau} \int_{-R}^{R} (m+\kappa \rho(s)) \delta^2(s,x) w_{k_0}(x){\rm d}x{\rm d}s\geq 0$ we deduce 
\begin{align}
\Delta_0^R(\tau)-\Delta_0^R(\sigma) 
\leq 2(I_1+I_2). \label{eq_energy}
\end{align} 
For $I_1$, we bound $|\rho -\tilde{\rho}|$ using a Cauchy-Schwarz inequality and the fact that ${\f{1}{w_{k_0}}}\in L^{1}(\mathbb{R})$,
\[\begin{aligned}
|\rho(s) -\tilde{\rho}(s)| &\leq  \int_\mathbb{R} |n(s,x)-\tilde{n}(s,x)| {\dfrac{\sqrt{w_{k_0}(x)}}{\sqrt{w_{k_0}(x)}}} \,{\rm d}x \\ &\leq \left(\int_\mathbb{R} |n(s,x)-\tilde{n}(s,x)|^2 w_{k_0}(x){\rm d}x\right)^{1/2} \left(\int_\mathbb{R} \dfrac{1}{w_{k_0}(x)}{\rm d}x \right)^{1/2} \\ &\leq \Delta_0(s)^{1/2} \left(\int_\mathbb{R} \dfrac{1}{w_{k_0}(x)}{\rm d}x \right)^{1/2}
\\ & \leq C_{k_0}\Delta_0(s)^{1/2},
\end{aligned}\]
and we also get thanks to Cauchy-Schwarz inequality and using the bounds \eqref{eq_uniforme}, \eqref{eq_borneXp},
\[ \begin{aligned}
\int_{-R}^R \tilde{n}(s,x) \delta(s,x) w_{k_0}(x){\rm d}x &\leq \left(\int_\mathbb{R} \tilde{n}(s,x)^2 w_{k_0}(x){\rm d}x\right)^{1/2} \left(\int_\mathbb{R} \delta(s,x)^2 w_{k_0}(x){\rm d}x \right)^{1/2} 
\\ &\leq  \|n(s,\cdot) \|_{L^{\infty}(\mathbb{R})}^{1/2} \|n(s,\cdot)\|_{X_k}^{1/2}  \Delta_0^{1/2}(s)
\\ & \leq C_{k_0, n_0, \Gamma,r,\kappa}  \Delta_0(s)^{1/2} ,
\end{aligned} \]
so that
\begin{align}
\label{estim_I1}
|I_1| \leq C_{k_0, n_0, \Gamma,r,\kappa} \int_{\sigma}^{\tau}\Delta_0(s) {\rm d}s.  
\end{align}
 
For $I_2$, we first have by a standard Cauchy-Schwarz inequality: 
\[
I_2 \leq r\int_{\sigma}^{\tau} \left(\int_{\mathbb R}  |T[n] -T[\tilde{n}]|^2 w_{k_0} \right)^{\frac 12}  \Delta_0^{\frac 12}.
 \]
However,  introducing $\ell \in (1,k_0-1)$ to be fixed later on, recalling Lemma \ref{lem_Tlip} with \eqref{eq_Cbw} and remarking that $\|\cdot\|_{{X_\ell}} \leq \|\cdot\|_{{X_k}}$, we infer that:
\[
\begin{aligned}
\int_{\mathbb R}  |T[n] -T[\tilde{n}]|^2 w_{k_0}  
& \leq \| w_{\ell}(T[n] -T[\tilde{n}])\|_{C_b(\mathbb R)} \|T[n] -T[\tilde{n}]\|_{X_{k_0-\ell}} \\
& \leq K \left( \f{\| n(s,\cdot) \|_{X_{k}}}{\left|\int_{\mathbb{R}} n(s,z){\rm d}z\right|}\f{\| \tilde{n}(s,\cdot) \|_{X_k}}{\left|\int_{\mathbb{R}}\tilde{n}(s,z){\rm d}z\right|} \right) \left( \|n\|_{X_{k}} + \|\tilde{n}\|_{X_{k}}\right) \|n- \tilde{n}\|_{X_{k_0-\ell}}. 
\end{aligned}
\]
Then, by Lemma \ref{lem_apbound} there is a constant $C_{0}$ depending only on initial data $n_0$, $r$, $\kappa$, $\Gamma$, $m$ and $T_0$ so that:
\[
\sup_{s \in (0,T_0)} K \left( \f{\| n(s,\cdot) \|_{X_k}}{\left|\int_{\mathbb{R}} n(s,z){\rm d}z\right|}\f{\| \tilde{n}(s,\cdot) \|_{X_k}}{\left|\int_{\mathbb{R}}\tilde{n}(s,z){\rm d}z\right|} \right) \left( \|n\|_{X_{k}} + \|\tilde{n}\|_{X_{k}}\right) \leq C_{0},
\]
using that there exists a bounded interval $I$ such that $\int_{\mathbb{R}} n_0(z){\rm d}z>0$ so that $m$ is bounded and 
\begin{align*}
\int_{\mathbb{R}} n(s,z){\rm d}z & \geq  \int_{I} n(s,z){\rm d}z >\int_{I} n_0(z) \exp\left[-\left(m(z) + \kappa  \max \left(\int_{\mathbb R} n_0(x){\rm d}x , \dfrac{r}{\kappa} \right) \right){\MH T_0}\right] {\rm d}z \\
& > C_{0}\int_{\mathbb{R}} n_0(z){\rm d}z.
\end{align*}
Then a Cauchy-Schwarz inequality entails that, for arbitrary ${\mu} >0$ there is $C_{{\mu}}$ such that:
\[
 \|n- \tilde{n}\|_{X_{k_0-\ell}} \leq  C_{{\mu}} \left( \int_{\mathbb R} |n-\tilde{n}|^2 w_{2(k_0-\ell) + (1+{\mu})} \right)^{\frac 12} 
\]
Since $k_0 >3$ we can choose $\ell$ and ${\mu}$ so that $2(k_0-\ell)+ (1+{\mu}) = k_0$ and we obtain finally that:
\begin{equation}
I_2  \leq C_{0} \int_{\sigma}^{\tau} \Delta_0(s) {\rm d}s. \label{estim_I2}
\end{equation}
with a constant $C_{0}$ depending on the chosen $\ell,\varepsilon,$ initial data and $T.$
We conclude combining \eqref{eq_energy}, \eqref{estim_I1}, \eqref{estim_I2} and sending $R$ to $+\infty$.
{\qed}

\end{proof}

\bigskip
\noindent \textbf{Acknowledgements}: 
M.H.  acknowledges support of the Institut Universitaire de France. This paper was partly realised while M.H. was benifiting a "subside \`a savant" from Universit\'e Libre de Bruxelles.  M.H.  would like to thank  the mathematics department at ULB for its hospitality. 
S.M. has been supported by  the ANR project DEEV ANR-20-CE40-0011-01 and the Chair ``Mod\'elisation Math\'ematique et Biodiversit\'e" of Veolia Environnement-Ecole Polytechnique-Museum National d'Histoire Naturelle-Fondation X.
\bibliographystyle{plain}
%\bibliography{\string~/Nextcloud/Documents/aaaa-tex-files/Bibtex/Bibtex3/bibli}

 \end{document}